\theoremstyle{thmstyletwo}%
\newtheorem{theorem}{Theorem}%  meant for continuous numbers
\newtheorem{proposition}[theorem]{Proposition}%
\newtheorem{remark}[theorem]{Remark}%
\newtheorem{corollary}[theorem]{Corollary}
\newtheorem{assumption}{Assumption}
\newtheorem{lemma}[theorem]{Lemma}
\numberwithin{equation}{section}
\numberwithin{theorem}{section}
\newcommand{\Th}{\mathcal{T}_h}
\newcommand{\V}{\mathbf{V}}
\newcommand{\Vz}{\mathbf{V}_0}
\newcommand{\Vt}{\mathbf{V}_2}
\newcommand{\Vh}{\mathbf{V}_h}
\newcommand{\W}{\mathbf{W}}
\newcommand{\Q}{\mathbf{Q}}
\newcommand{\X}{\mathbf{X}}
\newcommand{\Y}{\mathbf{Y}}
\newcommand\na{\nabla}
\newcommand\Om{\Omega}
\newcommand\pa{\partial}
\newcommand{\lk}{\ln{\frac{T}{k}}}
\newcommand{\norm}[1]{\lVert#1\rVert}
\newcommand{\q}{{\bf{q}}}
\newcommand{\dq}{{\bf{\delta q}}}
\newcommand{\pp}{{\bf{p}}}
\newcommand{\uu}{{\bf{u}}}
\newcommand{\ff}{{\bf{f}}}
\newcommand{\vv}{{\bf{v}}}
\newcommand{\zz}{{\bf{z}}}
\newcommand{\yy}{{\bf{y}}}
\newcommand{\ww}{{\bf{w}}}
\newcommand{\oo}{{\bf{0}}}
\newcommand{\IOprod}[1]{\left(#1\right)_{I\times\Omega}}
\newcommand{\Oprod}[1]{\left(#1\right)_{\Omega}}
\newcommand{\IOpair}[1]{\left\langle#1\right\rangle_{I\times\Omega}}
\newcommand{\Opair}[1]{\left\langle#1\right\rangle_{\Omega}}
\newcommand{\Iprod}[1]{\left(#1\right)_{I}}
\newcommand{\Ipair}[1]{\left\langle#1\right\rangle_{I}}
\newcommand{\Gw}{G_{\bf{w}}}
\newcommand{\R}{\mathbb R}
\renewcommand{\d}{\mathrm{d}}
\newcommand{\NBV}{\mathrm{NBV}}
\newcommand{\BV}{\mathrm{BV}}
\newcommand{\Pad}{P_{[\q_a,\q_b]}}
\DeclareMathOperator{\supp}{supp}
\DeclarePairedDelimiter{\twonorm}{\lVert}{\rVert_{L^2(\Omega)}}
\newcommand{\revB}[1]{\textcolor{black}{#1}}
\newcommand{\revC}[1]{{#1}}
\newcommand{\revD}[1]{{#1}}
\newcommand{\revision}[1]{\textcolor{black}{#1}}
\newcommand{\LtwoLtwo}{\revision{L^2(I\times \Om)}}
\begin{document}

% \DOI{DOI HERE}
\copyrightyear{2021}
% \vol{00}
% \pubyear{2021}
% \access{Advance Access Publication Date: Day Month Year}
% \appnotes{Paper}
% \copyrightstatement{Published by Oxford University Press on behalf of the Institute of Mathematics and its Applications. All rights reserved.}
\firstpage{1}

%\subtitle{Subject Section}

\title[Error Estimates for a State Constrained Stokes Optimal Control Problem]{A priori error estimates for optimal control problems governed by the transient Stokes equations and subject to state constraints pointwise in time}

\author{Dmitriy Leykekhman
\address{\orgdiv{Department of Mathematics}, \orgname{University of Connecticut}, \orgaddress{\street{Storrs}, \postcode{06269}, \state{CT}, \country{USA}}}}
\author{Boris Vexler\ORCID{0000-0001-6211-4263} and Jakob Wagner*\ORCID{0000-0001-8510-9790}
\address{\orgdiv{Chair of Optimal Control}, \orgname{Department of Mathematics, Technical University of Munich}, \orgaddress{\street{Boltzmannstr. 3}, \postcode{85748 Garching b. Munich}, \country{Germany}}}}
% \author{Fourth Author
% \address{\orgdiv{Department}, \orgname{Organization}, \orgaddress{\street{Street}, \postcode{Postcode}, \state{State}, \country{Country}}}}
% \author{Fifth Author
% \address{\orgdiv{Department}, \orgname{Organization}, \orgaddress{\street{Street}, \postcode{Postcode}, \state{State}, \country{Country}}}}

\authormark{D. Leykekhman, B. Vexler and J. Wagner}

\corresp[*]{Corresponding author: \href{email:wagnerja@cit.tum.de}{wagnerja@cit.tum.de}}

\received{Date}{0}{Year}
\revised{Date}{0}{Year}
\accepted{Date}{0}{Year}

%\editor{Associate Editor: Name}

\abstract{%
    In this paper, we consider a state constrained optimal control problem governed by the transient 
    Stokes equations. The state constraint is given by an $L^2$ functional in space, which is required
    to fulfill a pointwise bound in time. The discretization scheme for the Stokes equations consists of
    inf-sup stable finite elements in space and a discontinuous Galerkin method in time, for which we 
    have recently established best approximation type error estimates. 
    Using these error estimates, for the discrete control problem we \revision{derive} error estimates 
    and as a by-product  we show an improved regularity for 
    the optimal control. We complement our theoretical analysis with numerical results.
}
\keywords{\revision{Stokes equations, instationary, optimal control, state constraints, error estimates}}

\maketitle

%USE THE BELOW OPTIONS IN CASE YOU NEED AUTHOR YEAR FORMAT.
%\bibliographystyle{abbrvnat}
%\bibliography{reference}
\section{Introduction}
In this paper we consider the following optimal control problem 
\begin{subequations}
\begin{equation}\label{eq:optimal_problem}
\text{Minimize }\ J(\q,\uu) := \frac{1}{2} \int_0^T\norm{\uu(t) - {\uu}_d(t)}^2_{L^2(\Omega)}\ dt + \frac{\alpha}{2} \int_0^T \norm{\q(t)}^2_{L^2(\Omega)}\ dt
\end{equation}
subject to the state equation
\begin{equation}\label{eq:state_equation}
    \begin{aligned}
     \partial_t \uu - \Delta \uu + \nabla p &= \q  &\quad &\text{in } I \times \Omega,\\
     \nabla\cdot \uu &=0 &\quad  &\text{in } I \times \Omega,\\
      \uu &= \oo &\quad  &\text{on } I \times \partial \Omega,\\
      \uu(0) & = \oo &\quad   &\text{in } \Omega,
    \end{aligned}
\end{equation}
control constraints 
\begin{equation}\label{eq:control}
\q_a \le \q(t,x) \le \q_b \quad \text{for almost all }\; (t,x) \in I\times \Omega
\end{equation}
and state constraints 
\begin{equation}\label{eq:state_constraint}
\int_\Omega \uu(t,x)\cdot {\bf w}(x) \ dx \le \beta \quad \text{for all }\; t \in \bar{I}.
\end{equation}
\end{subequations}
Here we assume that $\Omega\subset \R^d$, $d=2,3$, is a convex polygonal or polyhedral domain and
$I= (0,T]$ is a bounded time interval. 
\revision{In the objective function, $\uu_d \in L^2(I\times \Om)^d$ represents the desired state and $\alpha > 0$ is the regularization parameter.}
The control constraints are given by the constant vectors 
$\q_a,\q_b\in \revC{(\R \cup \{\pm \infty\})^d}$ and satisfy $\q_a < \q_b$.
\revision{In the state constraint, the constant scalar $\beta$ satisfies $\beta > 0$ and
${\bf w}(x)$ is a given function in $L^2(\Om)^d$.}
Note that for ease of presentation, we consider an optimal control problem with 
homogeneous initial data $\uu(0) = \oo$, while 
\revC{all results also extend to the inhomogeneous case $\uu(0) = \uu_0$.}
The main result of this paper states, that the error between the optimal control $\bar{\q}$ for the continuous 
problem and the optimal solution $\bar{\q}_{\sigma}$ of the \revD{discretized} problem satisfies 
\[
\norm{\bar{\q} - \bar{\q}_{\sigma}}_{\LtwoLtwo} \le \frac{C}{\sqrt{\alpha}} \, \lk (k^{\frac{1}{2}}+h),
\]
and is presented in \Cref{thm:main_result}. 
A similar \revC{optimal control problem subject to the heat equation} was considered in
\cite{MeidnerD_RannacherR_VexlerB_2011}, where a 
comparable error estimate was derived. 
%In \cite{christof_new_2021} a state constrained parabolic problem
%was considered where the state constraints are applied pointwise in time and space, and an appropriate 
%error analysis was carried out.
\revD{The authors of \cite{ludovici_priori_2015,ludovici_priori_2018} discuss error estimates for 
  parabolic problems with purely time-dependent controls and impose constraints on spatial 
  averages of either function values or gradients of the state at every point in time.
  Error estimates for state constrained parabolic problems, with \revD{state constraints} applied 
  pointwise in time and space, can be found in 
  \cite{christof_new_2021,gong_error_2013,deckelnick_variational_2011}.
}
The optimal control of flow phenomena subject to state constraints is a
very active research topic, and there have been numerous contributions to the field, see, e.g.,
\cite{de_los_reyes_state_constrained_2008,de_los_reyes_semi_smooth_2006, de_los_reyes_regularized_2009} 
for optimal control of the stationary Navier-Stokes equations and 
\cite{fattorini_optimal_1998, liu_optimal_2010,wang_optimal_2002,wang_pontryagin_2002, wang_maximum_2003}
for the transient Navier-Stokes equations, subject to general state constraints of the form 
$\uu \in \mathcal C$.
Note that the above references only contain the analysis of the continuous problems and some numerical results,
but no derivation of error estimates.
\revC{In \cite{reyes_finite_2008} error estimates for an optimal control problem subject to the stationary 
Stokes equations with pointwise state constraints are shown.}
Let us also specifically mention \cite{john_optimal_2009}, where an optimal control problem of the 
stationary Navier-Stokes equations was considered, and a constraint was put onto the drag functional
$\int_{\partial \Omega} (\partial_n \uu - p \mathbf{n}) \mathbf{e}_d \ ds$, for some given direction of
interest 
specified by the unit vector $\mathbf{e}_d$. The setting of our work, constraining a $L^2(\Omega)$
functional pointwise in time, can be seen as a step towards discussing transient problems with 
drag/lift constraints at every point in time.
  The rest of the paper is structured as follows. In \Cref{sec:notation} we introduce the notation  and present some analysis of the transient Stokes problem used in this paper.
  We then proceed to discuss the optimal control problem, including wellposedness and optimality conditions
  in \Cref{sect:cont_opt_control}. 
  \revC{Depending on the regularity of available data, we discuss regularity and structural properties of 
  the optimal solution.}
  We introduce the discretization of the transient Stokes problem in
  \Cref{sec:discretization} and recollect some important error estimates.
  This allows us to discuss the discrete formulations of the optimal control problem, where first 
  in \Cref{sec:var_disc_opt_control}
  we present the analysis and error estimates for a problem with variational discretization,
  \revision{cf., \cite{deckelnick_variational_2011,hinze_variational_2005}}, where only 
  the state equation is discretized, but the control is not.
  Following up this section, we discretize the control by piecewise constant functions in space and time,
  and present the analysis of the fully discrete problem in \Cref{sec:full_disc_opt_control}, which contains
  the main result of this work, \Cref{thm:main_result}.
  We conclude our work by using the derived error estimates to obtain improved regularity 
  for the optimal control in \Cref{sec:improved_regularity} and presenting numerical results in 
  \Cref{sec:num_results}.
 %  In \Cref{subsec:improved_control_regularity} we give an argument, how improved regularity of the 
 %  weight function $\ww$ can also be used to obtain higher regularity of the optimal control.

%%%%%%%%%%%%%%%%%%%%%%%%%%%%%%%%%%%%%%%%%%%%%%%%%%%%%%%%%%%%%%
\section{Notation and Preliminary results}\label{sec:notation}
We will use the standard notation for the Lebegue and Sobolev spaces
over the spatial domain $\Om$. The pressure space is
\begin{equation*}
  L^2_0(\Om) := \left\{p \in L^2(\Om) \colon \int_\Om p \ dx = 0\right\}.
\end{equation*}
Throughout the paper, vector valued quantities and spaces will be indicated by boldface letters.
We denote for a Banach space $X$ and $1 \le p \le \infty$, by $L^p(I;X)$ the Bochner space of $X$-valued
functions over $I$, whose $X$-norm is $p$-integrable w.r.t time.
If $X$ is reflexive and $1 \le p < \infty$, there holds the following isomorphism,
see \cite[Corollary 1.3.22]{hytonen_bochner_2016}
\begin{equation}\label{eq:bochner_dual_isomorphism}
  L^p(I;X)^* \cong L^q(I;X^*), \qquad \text{where } \frac{1}{p} + \frac{1}{q} = 1.
\end{equation}
Note that the range includes the value $p=1$ and not $p= \infty$.
The dual space of $C(\bar I;L^2(\Om)^d)$ is isomorphic to the 
space of regular $L^2(\Om)^d$-valued Borel measures, and we denote it by 
$\mathcal M(\bar I;L^2(\Om)^d) \cong (C(\bar I;\revD{L^2(\Om)^d}))^*$.
Similarly for scalar regular Borel measures of $\bar I$, we use the notation 
$\mathcal M(\bar I) \cong (C(\bar I))^*$.
To denote the vector valued spaces of divergence free functions, having various levels of regularity,
we use the following notation
\begin{equation*}
  \Vz = \{\vv \in L^2(\Omega)^d: \ \nabla \cdot \vv = 0 \land \vv \cdot \mathbf{n} = 0 
  \text{ on } \partial \Om\}, \quad
    \V:=\{\vv\in H^1_0(\Omega)^d: \ \nabla\cdot\vv=0 \}, \quad
    \Vt := H^2(\Omega)^d \cap \V,
\end{equation*}
\revB{where by $\vv\cdot \mathbf{n}$ we denote the generalized normal trace.}
We denote by $\V^*$ the topological dual space of $\V$, and define
\revC{%
\begin{align}\label{eq:W_imbedding}
  \W &:= L^2(I;\V)\cap H^1(I;\V^*)\hookrightarrow C(\bar{I};\revC{\Vz}), \notag\\
  \X &:= \{\vv \in L^2(I;\V): \partial_t \vv \in L^2(I;\V^*) + L^1(I;\Vz) \text{ and } \vv(0)=\oo\}, \notag \\
  \Y &:= L^2(I;\V^*) + L^1(I;\Vz)  \quad \Rightarrow \quad \Y^* \cong L^2(I;\V) \cap L^\infty(I;\Vz) \quad
  \text{due to \eqref{eq:bochner_dual_isomorphism}.}\notag
\end{align}
}
We denote by $\Oprod{\cdot,\cdot}$ and $\IOprod{\cdot,\cdot}$ the inner products of $L^2(\Omega)^d$ and 
$L^2(I;L^2(\Omega)^d)$ respectively, and
by $\IOpair{\cdot,\cdot}$ the duality pairing between $L^2(I;\V)$ and $L^2(I;\V^*)$.
  We introduce the Stokes operator $A$, defined by
%  Let us recall some important regularity facts that hold true for the Stokes equations. 
%  \todo{Move this to the appropriate places earlier in the paper}
%  For brevity, let us denote by $A$ the Stokes operator, defined by
\begin{equation*}
  A \colon D(A) \subset \Vz \to \Vz, \ \Oprod{A \uu,\vv} = \Oprod{\nabla \uu,\nabla \vv}.
\end{equation*}
The $H^2$ regularity results of \cite{dauge_stationary_1989,kellogg_regularity_1976} show that 
$D(A) = \Vt$. As $A$ is a positive, selfadjoint operator, fractional powers $A^s$ are well defined.
Of special importance is $A^{\frac{1}{2}}$ which is an isometric isomorphism
\begin{equation*}
  A^{\frac{1}{2}} \colon D(A^{\frac{1}{2}}) = \V \to \Vz,
\end{equation*}
as it holds 
$\|A^{\frac{1}{2}} \uu\|_{L^2(\Om)}^2 = \Oprod{A \uu,\uu} = \Oprod{\nabla \uu,\nabla \uu} 
= \|\nabla \uu\|_{L^2(\Om)}^2$.
For the proof of $D(A^{\frac{1}{2}}) = \V$, see \cite[Ch. III, Lemma 2.2.1]{sohr_navier-stokes_2001}.
By its definition, we can extend $A$ to an operator (denoted by the same symbol) $A \colon \V \to \V^*$
yielding another isometric isomporphism between those spaces.
Lastly, as 
\begin{equation*}
  \|A^{\frac{1}{2}}\uu\|_{\V^*} 
  = \sup_{\vv \in \V} \dfrac{\Opair{A^{\frac{1}{2}} \uu,\vv}}{\|\vv\|_{\V}}
  = \sup_{\ww \in \Vz} \dfrac{\Opair{A^{\frac{1}{2}} \uu, A^{-\frac{1}{2}}\ww}}{\|A^{-\frac{1}{2}} \ww\|_{\V}}
  = \sup_{\ww \in \Vz} \dfrac{\Oprod{\uu, \ww}}{\|A^{-\frac{1}{2}} \ww\|_{\V}}
  = \sup_{\ww \in \Vz} \dfrac{\Oprod{\uu, \ww}}{\|\ww\|_{\Vz}}
  = \|\uu\|_{\Vz},
\end{equation*}
there holds the isometric isomorphism
\begin{equation}\label{eq:isom_isom_VzVstar}
  A^{\frac{1}{2}} \colon \Vz \to \V^*.
\end{equation}
Using the above defined function spaces, the weak formulation of the state equation \eqref{eq:state_equation} 
for a given \revC{$\q\in L^1(I;L^2(\Om)^d) + L^2(I;\V^*)$} reads as follows: Find $\uu\in \revC{\X}$ such that 
\revC{%
\begin{equation}\label{eq:weak_state}
\begin{aligned}
    \Opair{\pa_t \uu,\vv}+\Oprod{\na \uu,\na \vv} 
    &= \Oprod{\q,\vv}\quad \revC{\text{for all } \vv \in \V, \ \text{a.e. in } I}. %\\
    %\forall \vv\in L^2(I;\V)\revB{\cap L^\infty(I;\Vz)},\\
    % \uu(0)&=\oo.
\end{aligned}
\end{equation}
}
For the above weak formulation, there holds the following result.
\begin{theorem}\label{thm:solvability_state_equation}
    For $\q \in L^1(I;\revC{L^2(\Om)^d}) + L^2(I;\V^*)$, there exists a unique solution
    $\uu \in L^2(I;\V) \cap C(\bar I;\Vz)$ solving \eqref{eq:weak_state} and the following estimate holds
    \begin{equation*}
        \|\uu\|_{L^2(I;\V)} + \|\uu\|_{L^\infty(I;L^2(\Om))}
        \le C \|\q\|_{L^1(I;\Vz) + L^2(I;\V^*)}.
    \end{equation*}
    If $\q \in L^2(I;\revC{L^2(\Om)^d})$, then
    $\uu \in L^2(I;\revC{\Vt}) \cap H^1(I;\revC{\Vz}) \hookrightarrow C(\bar I;\V)$, and there holds
    \begin{equation*}
        \|\uu\|_{L^2(I;H^2(\Om))} + \|\uu\|_{L^\infty(I;\V)} + \|\partial_t \uu\|_{\LtwoLtwo}
        \le C \|\q\|_{\LtwoLtwo} .
    \end{equation*}
\end{theorem}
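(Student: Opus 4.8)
The plan is to eliminate the pressure and recast \eqref{eq:weak_state} as an abstract parabolic evolution equation governed by the Stokes operator, treating the two regularity regimes separately. Since the test functions in \eqref{eq:weak_state} are divergence free, the pressure drops out and, identifying $A\colon\V\to\V^*$, the equation reads $\pa_t\uu + A\uu = \q$ as an identity in $\Y$, where only the solenoidal part of $\q$ enters through the pairing. Indeed, for $\q\in L^2(\Om)^d$ one has $\Oprod{\q,\vv}=\Oprod{P\q,\vv}$ with $P$ the Leray projection onto $\Vz$, so that $\abs{\Oprod{\q,\vv}}\le\|P\q\|_{\Vz}\|\vv\|_{\V}$, which explains why the right-hand side of the first estimate is measured in the norm $\|\q\|_{L^1(I;\Vz)+L^2(I;\V^*)}$. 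The low-regularity claim I would obtain by a Galerkin/energy argument, and the maximal-regularity claim by the spectral decomposition of $A$.

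For the first statement I would construct the solution by a Galerkin scheme in the eigenbasis $\{\phi_j\}$ of $A$ and derive a priori bounds by testing the finite-dimensional system with the approximate solution itself, which formally yields $\tfrac12\tfrac{d}{dt}\|\uu\|_{\Vz}^2 + \|\uu\|_{\V}^2 = \Opair{\q,\uu}$. Splitting $\q = \q_1 + \q_2$ with $\q_1\in L^1(I;\Vz)$ and $\q_2\in L^2(I;\V^*)$, the term $\Opair{\q_2,\uu}\le\tfrac12\|\q_2\|_{\V^*}^2+\tfrac12\|\uu\|_{\V}^2$ is absorbed into $\|\uu\|_{\V}^2$ by Young's inequality, while $\Oprod{\q_1,\uu}\le\|\q_1\|_{\Vz}\|\uu\|_{\Vz}$ is handled by integrating the energy identity, taking the supremum over $t$, and applying Young's inequality to $\|\q_1\|_{L^1(I;\Vz)}\sup_t\|\uu\|_{\Vz}$. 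Passing to the weak limit gives $\uu\in L^2(I;\V)\cap L^\infty(I;\Vz)$ together with the asserted bound; reading $\pa_t\uu = \q - A\uu$ off the equation then shows $\pa_t\uu\in L^2(I;\V^*)+L^1(I;\Vz) = \Y$, since $A\uu\in L^2(I;\V^*)$ because $A\colon\V\to\V^*$ is an isometric isomorphism. Hence $\uu\in\X$, and the embedding $\X\hookrightarrow C(\bar I;\Vz)$ upgrades the $L^\infty$ bound to continuity. Uniqueness follows from linearity together with the same energy estimate applied to $\q = \oo$.

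For the second statement, with $\q\in\LtwoLtwo$ one has $P\q\in L^2(I;\Vz)$, and I would exploit that $A$ is positive and self-adjoint to obtain maximal $L^2$-regularity directly from its spectral decomposition: writing $\uu = \sum_j u_j\phi_j$ and $P\q = \sum_j f_j\phi_j$, each mode solves $u_j' + \lambda_j u_j = f_j$ with $u_j(0)=0$, so that $\lambda_j u_j = k_j * f_j$ with the causal kernel $k_j(\tau)=\lambda_j e^{-\lambda_j\tau}$ satisfying $\|k_j\|_{L^1(0,\infty)}=1$. Young's convolution inequality then gives $\|\lambda_j u_j\|_{L^2(I)}\le\|f_j\|_{L^2(I)}$ uniformly in $j$, and summing over $j$ yields $\|A\uu\|_{\LtwoLtwo}\le\|P\q\|_{\LtwoLtwo}$, whence $\uu\in L^2(I;D(A)) = L^2(I;\Vt)$ and $\pa_t\uu = \q - A\uu\in\LtwoLtwo$. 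Finally, using $\uu(0)=\oo$ and integrating the identity $\tfrac{d}{dt}\|A^{\frac12}\uu\|_{\Vz}^2 = 2\Oprod{\pa_t\uu,A\uu}$ in time, Cauchy--Schwarz bounds $\|\uu(t)\|_{\V}^2 = \|A^{\frac12}\uu(t)\|_{\Vz}^2$ by $\|\pa_t\uu\|_{\LtwoLtwo}\|A\uu\|_{\LtwoLtwo}$, giving the $L^\infty(I;\V)$ estimate and, combined with $\uu\in H^1(I;\Vz)$, the embedding into $C(\bar I;\V)$.

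I expect the main obstacle to be the low-regularity first estimate, where the merely $L^1$-in-time part of the data obstructs a direct Gronwall argument and forces the supremum-in-time trick to close the bound on $\|\uu\|_{L^\infty(I;\Vz)}$; establishing the embedding $\X\hookrightarrow C(\bar I;\Vz)$ for the rougher time-derivative space $\Y$, rather than the classical space $\W$, likewise needs a dedicated density and integration-by-parts argument. By contrast, the maximal-regularity part is comparatively routine, since the Hilbert-space and self-adjoint structure of $A$ reduces it to the elementary uniform convolution estimate above.
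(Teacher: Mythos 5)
Your proposal is correct, but it is worth noting that the paper does not actually prove this theorem: it delegates the first part to Temam (Ch.~III, Thm.~1.1 and the remark on p.~179) and the second part to Temam's Proposition 1.2 combined with the $H^2$ regularity of the stationary Stokes problem on convex polyhedra from Dauge/Kellogg--Osborn. Your first part (Galerkin in the eigenbasis of $A$, energy identity, splitting $\q=\q_1+\q_2$ with Young's inequality absorbing the $L^2(I;\V^*)$ part and the supremum-in-time trick handling the $L^1(I;\Vz)$ part) is exactly the standard argument underlying the cited result, so there is no real divergence there. For the maximal regularity part your route differs mildly from Temam's: he tests the Galerkin system with $A\uu_m$ and $\partial_t\uu_m$, whereas you diagonalize $A$ and reduce to the scalar convolution bound $\|\lambda_j e^{-\lambda_j\cdot}\|_{L^1(0,\infty)}=1$; both give $\|A\uu\|_{\LtwoLtwo}\le\|P\q\|_{\LtwoLtwo}$, and your version is arguably cleaner since it isolates where self-adjointness and positivity are used, while still requiring $D(A)=\Vt$ (i.e.\ the convexity of $\Om$ via Dauge/Kellogg--Osborn) to convert $\uu\in L^2(I;D(A))$ into the stated $H^2$ bound --- a point you use implicitly and should cite just as the paper does. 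The one step you correctly flag but do not carry out is the embedding $\X\hookrightarrow C(\bar I;\Vz)$ with $\partial_t\uu\in L^2(I;\V^*)+L^1(I;\Vz)$ rather than the classical $\W$; this is genuinely needed for the $C(\bar I;\Vz)$ conclusion and is exactly what the paper's citation of the ``remark on page 179'' (and, elsewhere, of Boyer--Fabrie) covers, so a complete write-up would either prove it by the density/integration-by-parts argument you mention or cite it.
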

\begin{proof}
    The first part of this theorem is proven in \cite[Chapter III, Theorem 1.1]{1977Temam}
    and the remark on page 179 therein.
    The $H^2$ regularity part can be shown as in \cite[Chapter III, Proposition 1.2]{1977Temam}
    using the $H^2$ regularity result for the stationary Stokes equations in convex
    polygonal/polyhedral domains, proven in \cite[Theorem 5.5, Theorem 6.3]{dauge_stationary_1989}
    see also \cite[Theorem 2]{kellogg_regularity_1976}.
\end{proof}

% \revB{%
%   An alternative formulation of the state equation can be formulated by introducing the space
%   \begin{equation*}
%     X = \{\vv \in \W \colon \ \vv(0) = 0\}.
%   \end{equation*}
%   Which due to the embedding \eqref{eq:W_imbedding} is a welldefined closed subspace of $\W$.
%   The weak form can then be formulated as: find $\uu \in X$ such that 
%   \begin{equation}
%     \IOpair{\partial_t \uu,\vv} + \IOprod{\nabla \uu,\nabla \vv} 
%     = \IOprod{\q,\vv} \quad \forall \vv \in L^2(I;\V).
%   \end{equation}
%   The adjoint equation is then: given $\mathbf{g} \in X^*$, find $\zz \in L^2(I;\V)$ satisfying
%   \begin{equation}
%     \IOpair{\partial_t \vv,\zz} + \IOprod{\nabla \vv,\nabla \zz} 
%     = \IOprod{\mathbf{g},\vv} \quad \forall \vv \in X.
%   \end{equation}
% }
\revC{%
  It is a classical result, that the Stokes operator in the Hilbert space setting exhibits maximal parabolic
  regularity, i.e.
  \begin{equation}\label{eq:max_par_reg_L2}
    \q \in L^p(I;\Vz), 1<p<\infty 
    \quad \Rightarrow \quad \uu \text{ solving \eqref{eq:weak_state} satisfies } \partial_t \uu, A\uu \in L^p(I;\Vz),
  \end{equation}
  see \cite[Proposition 2.6]{Behringer_Leykekhman_Vexler_2023},
  holding on fairly general domains, e.g. Lipschitz domains. If $\Omega$ is convex, the $H^2$ regularity results
  further imply that $\uu \in L^p(I;\Vt)$.
  Most often maximal parabolic regularity is treated in $L^2$ or $L^q$ setting in space,
  but it can also be extended 
  to settings of weaker spacial regularity. The property \eqref{eq:isom_isom_VzVstar} combined with 
  \cite[Lemma 11.4]{auscher_square_2015} yields that maximal parabolic regularity also holds in $\V^*$, i.e.
  \begin{equation}\label{eq:max_par_reg_dual}
    \q \in L^p(I;\V^*), 1 < p < \infty
    \quad \Rightarrow \quad \uu \text{ solving \eqref{eq:weak_state} satisfies } 
    \partial_t \uu \in L^p(I;\V^*), \uu \in L^p(I;\V).
  \end{equation}
%  \todo{In the application of the cited Lemma, we could go even further down, if we used 
%    $A:\Vz \to \V_{-2}$ as an isometric isomorphism, the question here is then, what this space $\V_{-2}$ 
%    actually is}
  }
\revC{If the right hand side $\q$ is regular enough, there exists an associated pressure to the
weak solution of \eqref{eq:weak_state}, formulated in divergence free spaces.
The regularity of the pressure depends on the regularity of the right hand side 
and the velocity component of the solution. There holds the following result, 
see \cite[Theorem 2.10, Corollary 2.11]{Behringer_Leykekhman_Vexler_2023}.}
\begin{proposition}\label{prop:associated_pressure}
  Let $\q \in L^s(I;L^2(\Om)^d)$ for some $1< s < \infty$ and let
  $\uu \in L^2(I;\V) \cap \revC{C(\bar I;\Vz)}$ be the weak solution to
  \eqref{eq:weak_state}. Then there exists \revC{a unique} $p \in L^s(I;L^2_0(\Om))$ such that 
  \begin{equation}\label{eq:associated_pressure}
    \partial_t \uu - \Delta \uu + \nabla p = \q,
  \end{equation}
  which is to be understood as an identity in $L^s(I;H^{-1}(\Om)^d)$.
  \revC{On convex domains $\Omega$, the pressure satisfies $p \in L^s(I;H^1(\Om))$.}
\end{proposition}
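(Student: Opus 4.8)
The plan is to recover the pressure pointwise in time from the divergence-free weak formulation \eqref{eq:weak_state} by means of the de Rham theorem (equivalently, the inf--sup/LBB property of the divergence operator on $\Om$), and then to lift this spatial construction into the correct Bochner space in time. Since the test functions in \eqref{eq:weak_state} are divergence-free, $\uu$ is simultaneously the weak solution for the right-hand side $\Pi\q \in L^s(I;\Vz)$, where $\Pi$ denotes the Helmholtz--Leray projection onto $\Vz$. Maximal parabolic regularity \eqref{eq:max_par_reg_L2} then yields $\partial_t\uu, A\uu \in L^s(I;\Vz) \hookrightarrow L^s(I;L^2(\Om)^d)$ together with $\uu \in L^s(I;\V)$, so that the residual $\q - \partial_t\uu + \Delta\uu$ is a well-defined element of $L^s(I;H^{-1}(\Om)^d)$, with $\Delta\uu$ read distributionally through the Dirichlet form and the non-solenoidal part of $\q$ destined to be absorbed into the pressure gradient.

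For almost every $t \in I$ I would introduce the functional $\ell(t) \in H^{-1}(\Om)^d$ acting on $\vv \in H^1_0(\Om)^d$ by $\langle \ell(t),\vv\rangle := \Oprod{\q(t),\vv} - \Oprod{\partial_t\uu(t),\vv} - \Oprod{\na\uu(t),\na\vv}$. The weak formulation \eqref{eq:weak_state} states exactly that $\langle\ell(t),\vv\rangle = 0$ for all $\vv\in\V$, i.e.\ $\ell(t)$ annihilates the solenoidal subspace. By the de Rham theorem there is then a unique $p(t)\in L^2_0(\Om)$ with $\ell(t) = \na p(t)$ in $H^{-1}(\Om)^d$, which upon rearranging is precisely \eqref{eq:associated_pressure}. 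The inf--sup condition on the Lipschitz (here even convex) domain $\Om$ supplies the bound $\|p(t)\|_{L^2(\Om)} \le C\,\|\ell(t)\|_{H^{-1}(\Om)^d}$ with $C$ independent of $t$.

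To pass from this pointwise-in-time statement to $p\in L^s(I;L^2_0(\Om))$, I would note that the pressure-recovery map $\ell(t)\mapsto p(t)$ is one fixed bounded linear operator on the annihilator of $\V$. Composition of the strongly measurable map $t\mapsto\ell(t)$ with this operator is again strongly measurable, and the uniform bound above combined with $\ell\in L^s(I;H^{-1}(\Om)^d)$ delivers both the measurability and the $s$-integrability in time, hence $p\in L^s(I;L^2_0(\Om))$. Uniqueness is immediate: two admissible pressures differ by a spatially constant function for a.e.\ $t$, and the zero-mean normalization of $L^2_0(\Om)$ forces them to agree.

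For the improved regularity on convex $\Om$, I would sharpen the spatial regularity of $\uu$ via $D(A)=\Vt$, so that \eqref{eq:max_par_reg_L2} in fact gives $\uu\in L^s(I;\Vt)$ and therefore $\Delta\uu\in L^s(I;L^2(\Om)^d)$. Then $\na p = \q - \partial_t\uu + \Delta\uu \in L^s(I;L^2(\Om)^d)$, and since each $p(t)\in L^2_0(\Om)$ now has an $L^2$ distributional gradient we get $p(t)\in H^1(\Om)$ with $\|p(t)\|_{H^1(\Om)}\le C\,\|\na p(t)\|_{L^2(\Om)}$; integrating the $s$-th power in time yields $p\in L^s(I;H^1(\Om))$. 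I expect the main obstacle to lie not in the pointwise-in-space de Rham step, which is classical, but in the careful bookkeeping that guarantees time measurability and $L^s$-integrability of $t\mapsto p(t)$: this is exactly where the uniform-in-$t$ inf--sup bound is decisive, while convexity enters only through the $H^2$ regularity of $\uu$ needed for the final $H^1$ pressure estimate.
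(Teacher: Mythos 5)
Your proof is correct and complete. The paper itself does not prove \Cref{prop:associated_pressure} in-line but simply cites \cite[Theorem 2.10, Corollary 2.11]{Behringer_Leykekhman_Vexler_2023}; your argument --- upgrading $\partial_t\uu$ and $\uu$ via the maximal parabolic regularity \eqref{eq:max_par_reg_L2} applied to the Leray projection of $\q$, recovering $p(t)$ pointwise in time by the de Rham/inf--sup characterization of functionals annihilating $\V$, and securing measurability and $L^s$-integrability through the fixed bounded pressure-recovery operator --- is precisely the standard route by which such a statement is established, and the convexity/$H^2$ step yielding $p\in L^s(I;H^1(\Om))$ is likewise sound.
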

\section{Continuous Optimal Control Problem}\label{sect:cont_opt_control}
\revC{We now} introduce the control to state mapping.
\begin{theorem}\label{thm:regularity_sol_operator}
Let $S\colon {\q}\mapsto {\uu}$ denote the solution operator for the state  equation 
\eqref{eq:weak_state}. Then $S$ is a bounded linear operator between the following spaces;
\begin{itemize}
\item $S\colon L^2(I;\V^*)\to \W \hookrightarrow C(\bar{I};L^2(\Omega)^d)$
\item $S\colon L^1(I;\Vz)\to C(\bar{I};L^2(\Omega)^d)$
\item $S\colon L^\infty(I;L^2(\Om)^d)\to W^{1,s}(I;L^2(\Om)^d)\cap L^{s}(I;H^2(\Om)^d)$, \quad $1 \le s<\infty$.
\end{itemize}
\end{theorem}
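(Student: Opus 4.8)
The plan is to read off each of the three mapping properties from a regularity result already available in the excerpt, matching the target space in each bullet to the sharpest applicable statement. Linearity of $S$ is immediate from the weak formulation \eqref{eq:weak_state}, and boundedness will in every case come packaged with the corresponding a priori estimate, so the argument reduces to citing the right tool for each line and checking the embeddings.

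For the first mapping I would take $\q \in L^2(I;\V^*)$ and apply maximal parabolic regularity in the dual space, \eqref{eq:max_par_reg_dual} with $p=2$, which yields $\uu \in L^2(I;\V)$ and $\pa_t \uu \in L^2(I;\V^*)$, that is $\uu \in \W$ together with the norm bound (note $\uu \in L^2(I;\V) \hookrightarrow L^2(I;\V^*)$, so the $H^1(I;\V^*)$ part of $\W$ is included). The claimed embedding into $C(\bar I;L^2(\Om)^d)$ is then exactly $\W \hookrightarrow C(\bar I;\Vz)$ recorded in \eqref{eq:W_imbedding} followed by $\Vz \hookrightarrow L^2(\Om)^d$. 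The second mapping is a direct restriction of the first part of \Cref{thm:solvability_state_equation}: since $L^1(I;\Vz) \subset L^1(I;L^2(\Om)^d) + L^2(I;\V^*)$, that theorem already produces $\uu \in C(\bar I;\Vz) \hookrightarrow C(\bar I;L^2(\Om)^d)$ with the stated estimate, so nothing beyond a reference is required.

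The third mapping is the only one requiring genuine work, and is where I expect the main obstacle. First I would use that $I$ is bounded, so that $L^\infty(I;L^2(\Om)^d) \hookrightarrow L^s(I;L^2(\Om)^d)$ for every finite $s$. The substantive issue is that maximal parabolic regularity \eqref{eq:max_par_reg_L2} is phrased for solenoidal data in $\Vz$, whereas $\q$ need not be divergence free; this I would resolve with the Leray projection $\mathbb{P}\colon L^2(\Om)^d \to \Vz$, which is bounded and satisfies $\Oprod{\q,\vv} = \Oprod{\mathbb{P}\q,\vv}$ for all $\vv \in \V$, whence $S\q = S\mathbb{P}\q$ and $\mathbb{P}\q \in L^s(I;\Vz)$. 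For $1<s<\infty$ I then apply \eqref{eq:max_par_reg_L2} with $p=s$ to obtain $\pa_t \uu, A\uu \in L^s(I;\Vz)$; convexity of $\Om$ together with $D(A)=\Vt$ upgrades this to $\uu \in L^s(I;\Vt) \subset L^s(I;H^2(\Om)^d)$, and combining $\uu \in L^s(I;L^2(\Om)^d)$ with $\pa_t \uu \in L^s(I;\Vz) \hookrightarrow L^s(I;L^2(\Om)^d)$ gives $\uu \in W^{1,s}(I;L^2(\Om)^d)$. The endpoint $s=1$ is excluded by \eqref{eq:max_par_reg_L2}, but it follows by choosing any $s' \in (1,\infty)$ and invoking the bounded-interval embeddings $L^{s'}(I;H^2(\Om)^d)\hookrightarrow L^1(I;H^2(\Om)^d)$ and $W^{1,s'}(I;L^2(\Om)^d)\hookrightarrow W^{1,1}(I;L^2(\Om)^d)$. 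The only remaining care is that the maximal-regularity constant depends on $s$ and degenerates as $s\to\infty$, which is harmless because the statement fixes a finite $s$; for cross-checking, the case $s=2$ also drops out of the second part of \Cref{thm:solvability_state_equation}.
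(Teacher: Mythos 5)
Your proposal is correct and follows essentially the same route as the paper, which likewise derives the first two bullets from \Cref{thm:solvability_state_equation} and the third from maximal parabolic regularity combined with $H^2$ regularity on convex domains. The extra details you supply (the Leray projection to reduce to solenoidal data, the treatment of the endpoint $s=1$, and obtaining $\partial_t\uu\in L^2(I;\V^*)$ via \eqref{eq:max_par_reg_dual} rather than by reading $\partial_t\uu=\q-A\uu$ off the equation) are all consistent with, and merely flesh out, the paper's brief argument.
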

\begin{proof}
  The first two claims are the direct consequences of \Cref{thm:solvability_state_equation}. Note, that 
  the regularity $\partial_t \uu \in L^2(I;\V^*)$ can only be obtained by bootstrapping if the right hand side
  is in $L^2(I;\V^*)$. The third claim is obtained by using the maximal parabolic regularity
  and $H^2$ regularity of the stationary Stokes problem on convex polygonal/polyhedral domains, see 
  \cite[Theorem 5.5, Theorem 6.3]{dauge_stationary_1989}.
\end{proof}

To abbreviate the notation, we will frequently use $\uu(\q) := S(\q)$.
Since $S$ is a  bounded linear operator between the spaces introduced in the previous theorem,
it is Fr\'echet differentiable, and its \revC{directional derivative in direction $\dq$} satisfies
\begin{equation*}
  S'(\q)(\dq) = S(\dq),
\end{equation*}
i.e. is independent of $\q$.
\begin{remark}
    The operator $S$ is linear and coincides with its Frechét derivative,
    due to our choice to work with homogeneous inital data
    in the state equation \eqref{eq:state_equation}. The results presented in this work however also
    hold true in the inhomogeneous initial data case. In this case, the 
    assumption $\beta>0$ \revision{, on the parameter $\beta$ of the state constraint,}
    then has to be generalized to $\beta> \Oprod{\uu_0,\ww}$.
\end{remark}
% 
% To obtain a meaningful and well-defined formulation of the adjoint operator $S^*$, we introduce the spaces
% \begin{align*}
%   \X &:= \{\vv \in \W: \vv(0)=\oo\}\\
%   \Y &:= L^2(I;\V) \cap L^\infty(I;\Vz) \quad \Rightarrow \quad \Y^* \cong L^2(I;\V^*) + L^1(I;\Vz) \quad
%   \text{due to \eqref{eq:bochner_dual_isomorphism}.}
% \end{align*}
\revC{Let us next give a characterization of the adjoint operator $S^*$.}
\Cref{thm:solvability_state_equation}, 
\revC{together with \cite[Proposition V.1.3]{boyer_mathematical_2013},} 
yields that for any $\q \in \revC{\Y}$, there 
exists a unique solution $\uu \in \X$ such that 
\begin{equation}\label{eq:weak_state_timedept_testfunctions}
  a(\uu,\vv) := \IOpair{\partial_t \uu,\vv} + \IOprod{\nabla \uu,\nabla \vv} = \IOpair{\q,\vv} \qquad
  \forall \vv \in \revC{\Y^*},
\end{equation}
\revC{and the two formulations \eqref{eq:weak_state} and \eqref{eq:weak_state_timedept_testfunctions} are 
equivalent.}
The solution operator $S$ is the inverse of the operator $T\colon \X \to \Y$, 
$\langle T \uu,\vv\rangle_{\Y\times \Y^*} = a(\uu,\vv)$.
As $T$ is invertible, so is $T^*$ and its inverse is precisely $S^*$. 
With this construction, the adjoint operator $S^*: \mathbf{g} \mapsto \zz$,
corresponds to the following weak formulation:
given $\mathbf{g} \in \X^*$, find $\zz \in \revC{\Y^*}$ satisfying
\begin{equation}\label{eq:weak_formulation_adjoint}
  \IOpair{\partial_t \vv,\zz} + \IOprod{\nabla \vv,\nabla \zz} 
  = \IOprod{\mathbf{g},\vv} \quad \forall \vv \in \X.
\end{equation}

We now discuss the regularity of the adjoint operator.

\begin{corollary}\label{cor:adjoint_sol_operator_regularity}
  Let $S^*$ denote the adjoint operator to the solution operator introduced in \Cref{thm:regularity_sol_operator}. Then it satisfies
  \begin{equation*}
    S^*\colon \mathcal M(\bar I;L^2(\Omega)^d) \to L^2(I;\V)\cap  L^\infty(I;\Vz).
  \end{equation*}
\end{corollary}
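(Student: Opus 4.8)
The plan is to obtain the statement purely by duality from the mapping properties of $S$ recorded in \Cref{thm:regularity_sol_operator}, without ever writing an energy estimate for the adjoint equation \eqref{eq:weak_formulation_adjoint} directly. The key observation is that the two spaces in the claim are exactly a dual pair already built into the definitions of \Cref{sec:notation}: by \eqref{eq:bochner_dual_isomorphism} one has $\Y^* \cong L^2(I;\V)\cap L^\infty(I;\Vz)$, while the dual of $C(\bar I;L^2(\Om)^d)$ is, as noted in \Cref{sec:notation}, the measure space $\mathcal M(\bar I;L^2(\Om)^d)$. Hence it suffices to realize $S$ as a bounded operator from $\Y$ into $C(\bar I;L^2(\Om)^d)$ and simply read off its Banach-space adjoint.

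First I would record that the first part of \Cref{thm:solvability_state_equation} (equivalently, the first two items of \Cref{thm:regularity_sol_operator}) already provides a single bound on the sum space $\Y = L^2(I;\V^*)+L^1(I;\Vz)$: for $\q\in\Y$ the solution satisfies $\uu = S\q\in C(\bar I;\Vz)$ with $\|\uu\|_{L^\infty(I;L^2(\Om))}\le C\|\q\|_{\Y}$. Since $\Vz\hookrightarrow L^2(\Om)^d$, this exhibits $S$ as a bounded linear operator $S\colon \Y\to C(\bar I;L^2(\Om)^d)$.

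Taking the Banach adjoint then gives a bounded operator $S^*\colon (C(\bar I;L^2(\Om)^d))^*\to\Y^*$, which under the two identifications above is precisely $S^*\colon \mathcal M(\bar I;L^2(\Om)^d)\to L^2(I;\V)\cap L^\infty(I;\Vz)$, as asserted. I would emphasize that the intersection structure of the target is automatic and does not require two separate regularity arguments: $S^*\mu$ is a single element of the single dual space $\Y^*$, which is exactly why passing through the sum space $\Y$, rather than through each summand separately, is the efficient route.

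The one point that genuinely needs care, and which I expect to be the main obstacle, is to confirm that this abstract adjoint coincides with the weak adjoint solution operator introduced via \eqref{eq:weak_formulation_adjoint}, i.e. with $(T^*)^{-1}$ for $T\colon\X\to\Y$ and $S=T^{-1}$. For this I would first check that a measure is an admissible right-hand side, namely $\mathcal M(\bar I;L^2(\Om)^d)\hookrightarrow\X^*$; this follows from the continuous embedding $\X = S(\Y)\hookrightarrow C(\bar I;\Vz)\hookrightarrow C(\bar I;L^2(\Om)^d)$, obtained by combining the bijectivity of $T$ with the bound of the previous step, so that $(C(\bar I;L^2(\Om)^d))^*\hookrightarrow\X^*$. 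The defining identity $\langle S^*\mu,\q\rangle_{\Y^*\times\Y} = \langle \mu, S\q\rangle$ for the abstract adjoint then matches, after the substitution $\q = T\vv$, the pairing in \eqref{eq:weak_formulation_adjoint} with $\mathbf g=\mu$ restricted to $\mu\in\mathcal M(\bar I;L^2(\Om)^d)$. Thus the two notions of $S^*$ agree and the claimed regularity $S^*\mu\in L^2(I;\V)\cap L^\infty(I;\Vz)$ transfers.
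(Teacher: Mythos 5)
Your proposal is correct and follows essentially the same route as the paper: the paper's proof is a one-line appeal to the definition of $S^*$ as $(T^*)^{-1}$ with $T\colon\X\to\Y$, the mapping properties of $S$ from \Cref{thm:regularity_sol_operator}, and the duality $(L^1(I;\Vz))^*\cong L^\infty(I;\Vz)$ from \eqref{eq:bochner_dual_isomorphism}, which is exactly the dualization through $\Y$ and $\Y^*\cong L^2(I;\V)\cap L^\infty(I;\Vz)$ that you carry out. Your additional verification that the abstract Banach adjoint agrees with the weak formulation \eqref{eq:weak_formulation_adjoint} is a welcome detail the paper leaves implicit, but it does not change the argument.
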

\begin{proof}
  This is a direct consequence of the definition of the adjoint operator, \Cref{thm:regularity_sol_operator},
  and the isomorphism $(L^1(I;\Vz))^* \cong L^\infty(I;\Vz)$, due to \eqref{eq:bochner_dual_isomorphism}.
\end{proof}

Due to the linearity of the adjoint operator, it again holds
\begin{equation*}
  S^*(\dq) = (S'(\q))^*(\dq).
\end{equation*}
For convenience, for the weight ${\bf w}\in L^2(\Om)^d$,
we define the functional $G_{\bf w}\colon L^2(\Om)^d\to \mathbb{R}$ by
\begin{equation}\label{eq: definition of G}
G_{\bf w}(\vv):=(\vv,{\bf w})_{\Omega}.
\end{equation}
For time-dependent functions $\vv \colon I\to L^2(\Om)^d$, the application of $G_{{\bf w}}$ is defined by
$$
G_{\bf w}(\vv)(t):=G_{\bf w}(\vv(t)).
$$
Using the functional $G_{\bf w}$, the state constraint \eqref{eq:state_constraint} can be compactly rewritten as
\begin{equation}\label{eq: state constrain using G}
G_{\bf w}(\uu)\le \beta\quad \text{in}\ \bar{I}.
\end{equation}
\begin{remark}\label{remark: continuity of G}
  Due to the continuous embedding $\W\hookrightarrow C(\bar{I};L^2(\Omega)^d)$, we have $G_{\bf w}(\vv)(\cdot)\in C(\bar{I})$ for any $\vv\in \W$. Thus we can understand $G_{\bf w}$ as a linear, continuous operator
  from $\W$ to $C(\bar I)$.
\end{remark}

To write the optimal control problem \eqref{eq:optimal_problem}-\eqref{eq:state_constraint} into 
reduced form, we define $\mathcal{G}:=G_{\bf w}\circ S$ and the closed convex cone
$\mathcal{K}\subset C(\bar{I})$ by
$$
\mathcal{K}:=\{v\in C(\bar{I}): \ v\le \beta\ \text{in}\ \bar{I}\}.
$$
Using the above definitions, the reduced form reads
\begin{equation}\label{optimal problem reduced}
\text{Minimize }\ j(\q):=J(\q,S(\q)) \quad \text{for} \quad \q\in \Q_{ad}\quad \text{subject to}\quad 
\mathcal{G}(\q)\in \mathcal{K},
\end{equation}
where the admissible set is given by
$$
\Q_{ad}=\{ \q\in L^2(I; L^2(\Omega)^d):
\quad \q_a \le \q(t,x) \le \q_b \quad \text{a.e. in }   I\times \Omega\}.
$$
\revB{%
We define the projection operator onto the feasible set by
\begin{equation*}
  P_{[\q_a,\q_b]}(\q) := \min\{\q_b,\max\{\q,\q_a\}\},
\end{equation*}
which we understand componentwise and pointwise for every $(t,x) \in I \times \Om$.
Throughout the paper, we will work under the following assumption.
}

\begin{assumption}[Slater condition]\label{Slater}
  There exists $\tilde{\q}\in \Q_{ad}$ such that $G_{\bf w}(\uu(\tilde{\q}))<\beta$ for 
  \revC{all $t \in \bar I$}, where
  $\uu(\tilde{\q})$ is the solution of the weak transient Stokes problem \eqref{eq:weak_state} for this
  particular control $\tilde{\q}$.
\end{assumption}

\begin{remark}
  As the homogeneous initial data in our setting necessitate the choice $\beta > 0$, if the control 
  constraints $\q_a, \q_b$ admit the control $\tilde \q \equiv \oo$, the existence of such a Slater point is 
  immediately given.
  In that case, the unique solution to the state equation with right hand side $\tilde \q \equiv \oo$ is 
  $\uu(\tilde \q) \equiv \oo$ which trivially satisfies $\Gw(\uu(\tilde \q)) = 0 < \beta$.
\end{remark}
\begin{theorem}\label{thm:existence_optimal_control}
  Under \Cref{Slater}, there exists a unique optimal control $\bar \q \in L^2(I;L^2(\Om)^d)$
  with unique associated state $\bar \uu$, solving the optimal control problem
  \eqref{eq:optimal_problem}-\eqref{eq:state_constraint}.
\end{theorem}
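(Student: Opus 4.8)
The plan is to apply the direct method of the calculus of variations to the reduced problem \eqref{optimal problem reduced}, exploiting that it is a strictly convex minimization over a closed convex feasible set. First I would record the structural features of the problem. The feasible set $\mathcal F := \{\q \in \Q_{ad} : \mathcal G(\q) \in \mathcal K\}$ is nonempty, since the Slater point $\tilde{\q}$ from \Cref{Slater} satisfies $G_{\bf w}(\uu(\tilde{\q})) < \beta$ and hence lies in $\mathcal F$. The reduced objective $j(\q) = \tfrac12 \norm{S\q - \uu_d}_{\LtwoLtwo}^2 + \tfrac{\alpha}{2}\norm{\q}_{\LtwoLtwo}^2$ is bounded below by zero, and because $\alpha > 0$ the regularization term makes $j$ coercive on $L^2(I;L^2(\Om)^d)$: any sequence along which $j$ stays bounded is bounded in the $L^2$ norm, irrespective of whether the box bounds $\q_a,\q_b$ are finite.

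For existence I would take a minimizing sequence $(\q_n) \subset \mathcal F$. By coercivity it is bounded in $L^2(I;L^2(\Om)^d)$, so after passing to a subsequence we obtain a weak limit $\q_n \rightharpoonup \bar\q$. It then remains to verify that $\bar\q \in \mathcal F$ and that $j(\bar\q) \le \liminf_n j(\q_n)$. The set $\Q_{ad}$ is convex and strongly closed, hence weakly closed by Mazur's theorem, so $\bar\q \in \Q_{ad}$. For the state constraint, the operator $\mathcal G = G_{\bf w}\circ S$ is bounded and linear as a map from $L^2(I;L^2(\Om)^d)$ into $C(\bar I)$, by \Cref{thm:regularity_sol_operator} together with the continuity of $G_{\bf w}$ recorded in \Cref{remark: continuity of G}; being bounded and linear it is continuous from the weak topology of $L^2$ to the weak topology of $C(\bar I)$, so $\mathcal G(\q_n) \rightharpoonup \mathcal G(\bar\q)$. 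Since $\mathcal K$ is convex and closed it is weakly closed, and each $\mathcal G(\q_n) \in \mathcal K$, whence $\mathcal G(\bar\q) \in \mathcal K$ and $\bar\q \in \mathcal F$. Finally $j$ is convex and continuous, therefore weakly lower semicontinuous, which yields $j(\bar\q) \le \liminf_n j(\q_n)$ and shows that $\bar\q$ is a minimizer; the associated state is $\bar\uu = S(\bar\q)$.

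Uniqueness would follow from strict convexity of $j$. The tracking term $\q \mapsto \tfrac12\norm{S\q - \uu_d}_{\LtwoLtwo}^2$ is convex, being the composition of the convex squared norm with the affine map $\q \mapsto S\q - \uu_d$, while the regularization term $\tfrac{\alpha}{2}\norm{\q}_{\LtwoLtwo}^2$ is strictly convex because $\alpha > 0$; their sum is therefore strictly convex. A strictly convex functional admits at most one minimizer over the convex set $\mathcal F$, which gives uniqueness of $\bar\q$, and uniqueness of $\bar\uu = S(\bar\q)$ is then immediate because $S$ is single-valued.

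The main obstacle is the weak closedness of the feasible set, that is, the limit passage in the state constraint $\mathcal G(\q_n)\in\mathcal K$; everything else is a routine application of the direct method. This step hinges precisely on $\mathcal G$ landing continuously in $C(\bar I)$, which is what \Cref{thm:regularity_sol_operator} and the embedding $\W \hookrightarrow C(\bar I;L^2(\Om)^d)$ from \Cref{remark: continuity of G} supply. As an alternative to the weak-to-weak continuity argument, one could exploit that $S$ sends weakly convergent controls to \emph{strongly} convergent states in $C(\bar I;L^2(\Om)^d)$ via an Aubin--Lions compactness argument, rendering the passage $\mathcal G(\bar\q)\in\mathcal K$ direct; the route sketched above is, however, the most economical.
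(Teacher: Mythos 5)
Your proof is correct, and while the overall skeleton (direct method, minimizing sequence, weak limit, weak lower semicontinuity, strict convexity for uniqueness) matches the paper, you handle the one genuinely delicate step --- passing to the limit in the state constraint --- differently. The paper takes a minimizing sequence of control--state \emph{pairs}, invokes the $H^2$-in-space/$H^1$-in-time a priori bound from \Cref{thm:regularity_sol_operator}, and uses the compact embedding $L^2(I;H^2(\Om)^d)\cap H^1(I;L^2(\Om)^d)\hookrightarrow C(\bar I;L^2(\Om)^d)$ (Aubin--Lions--Simon) to obtain \emph{strong} convergence of the states in $C(\bar I;L^2(\Om)^d)$, from which $\Gw(\hat\uu)\le\beta$ follows pointwise; this is exactly the alternative you mention and discard at the end. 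You instead exploit that $\mathcal G=G_{\bf w}\circ S$ is a bounded linear map into $C(\bar I)$, hence weak-to-weak continuous, and that $\mathcal K$ is closed and convex, hence weakly closed by Mazur; this avoids the compactness argument and the higher regularity of the states entirely, and is indeed the more economical route for this linear problem. The trade-off is that the paper's compactness argument is the one that survives when the state equation is nonlinear (e.g.\ Navier--Stokes), where the solution operator is no longer linear and weak-to-weak continuity of $\mathcal G$ is unavailable, whereas your argument is tied to linearity of $S$. Both arguments are complete; your uniqueness reasoning via strict convexity of $j$ coincides with the paper's.
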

\begin{proof}
  The \Cref{Slater} yields the existence of a feasible $\tilde \q$, such that the associated state 
  $\tilde \uu = \uu(\tilde \q)$ satisfies the state constraint. Let us define
  \begin{equation*}
    \mathcal J := \inf \{J(\q,\uu)\colon \q 
    \text{ and $\uu$ satisfy \eqref{eq:state_equation}-\eqref{eq:state_constraint}}\}
    \revC{\ge 0},
  \end{equation*}
  and let $\{\q_n\}$ and $\{\uu_n := \uu(\q_n)\}$ denote sequences of feasible controls with associated 
  states, such that $J(\q_n,\uu_n) \to \mathcal J$ as $n\to \infty$.
  As it holds $J(\q_n,\uu_n) \le \revC{\mathcal{J}+1}$ for large enough $n$, there holds a bound
  $\|\q_n\|_{\LtwoLtwo} \le C$ for all $n$.
  From \Cref{thm:regularity_sol_operator}, we obtain 
  $\|\uu_n\|_{L^2(I;H^2(\Om))} + \|\uu_n\|_{H^1(I;L^2(\Om))} \le C$. 
  We can thus take a subsequence, denoted by the same index, such that 
  \begin{equation*}
    \q_n \rightharpoonup \hat \q \text{ in } L^2(I;L^2 (\Om)^d), \quad
    \uu_n \rightharpoonup \hat \uu \text{ in } L^2(I;H^2(\Om)^d), \quad
    \partial_t \uu_n \rightharpoonup \partial_t \hat \uu \text{ in } L^2(I;L^2 (\Om)^d).
  \end{equation*}
  These allow us to pass to the limit in the weak form of the state equation, showing that 
  $\hat \uu = \uu(\hat \q)$. Furthermore, as 
  $L^2(I;H^2(\Om)^d)\cap H^1(I;L^2(\Om)^d) \hookrightarrow C(\revD{\bar I};H^s(\Om)^d)$ compactly,
  for $s<1$, see \cite[Corollary 8]{simon_compact_1986}, by taking another subsequence, we obtain 
  $\uu_n \to \hat \uu$ in $C(\revD{\bar I};L^2(\Om)^d)$. 
  As $\Gw(\uu_n) \le \beta$ for \revC{all} $t \in I$, this 
  shows $\Gw(\hat \uu) \le \beta$ for \revC{all} $t \in I$. Lastly, it holds due to the lower semicontinuity of 
  the norms
  \begin{equation*}
    J(\hat \q,\hat \uu) \le \liminf_{n\to \infty} J(\q_n,\uu_n) = \mathcal J,
  \end{equation*}
  which shows that $\bar \q = \hat \q$ is indeed a minimizer with associated state $\bar \uu = \hat \uu$.
  Using uniform convexity of the squared $L^2$ norms and linearity of the state equation gives the uniqueness.
\end{proof}

%\subsection{Optimality System}
\begin{theorem}[First order optimality system]\label{thm:continuous first order optimality}
  \revC{Let \Cref{Slater} be fulfilled. Then a}
  control $\bar{\q}\in \Q_{ad}$ with associated state $\bar{\uu}=\uu(\bar{\q})$ is \revC{the} optimal solution
  to the problem \eqref{eq:optimal_problem}-\eqref{eq:state_constraint} if and only if  and there exists 
  an adjoint state $\bar{\zz}\in L^2(I;\V)\cap \revB{L^\infty(I;\Vz)}$ and a Lagrange multiplier 
  $\bar{\mu}\in (C(\bar I))^*$ that satisfy:\\
\begin{subequations}
{State equation}
\begin{equation}\label{eq: optimal state}
    \begin{aligned}
     \partial_t \bar{\uu} - \Delta \bar{\uu} + \nabla \bar{p} &= \bar{\q}  &\quad &\text{in } I \times \Omega,\\
     \nabla\cdot \bar{\uu} &=0 &\quad  &\text{in } I \times \Omega,\\
     \bar{\uu} &= \oo &\quad  &\text{on } I \times \partial \Omega,\\
     \bar{\uu}(0) & = \oo&\quad   &\text{in } \Omega;
    \end{aligned}
\end{equation}
{\revB{State constraint and complementarity conditions}}
\begin{equation}\label{eq: Complementary conditions}
  \Gw(\bar{\uu})\le \beta , \quad
    \bar{\mu}\geq 0\quad\text{and}\quad \langle \bar{\mu},\beta-\Gw (\bar \uu) \rangle =0;
\end{equation}
{Adjoint equation}
\begin{equation}\label{eq: adjoint state}
    \begin{aligned}
     -\partial_t \bar{\zz} - \Delta \bar{\zz} + \nabla \bar{r} &= \bar{\uu}-\uu_d+{\bar{\mu}}\ww  &\quad &\text{in } I \times \Omega,\\
     \nabla\cdot \bar{\zz} &=0 &\quad  &\text{in } I \times \Omega,\\
     \bar{\zz} &= \oo &\quad  &\text{on } I \times \partial \Omega,\\
     \bar{\zz}(T) & = \revB{\bar \mu(\{T\}) \ww} &\quad   &\text{in } \Omega;
    \end{aligned}
\end{equation}
{Variational inequality }
\begin{equation}\label{eq: Variational inequality}
\left(\alpha\bar{\q}+\bar{\zz}, \delta \q-\bar{\q}\right)\geq 0 \quad  \forall \delta \q\in \Q_{ad} \quad \Leftrightarrow \quad \bar{\q} =P_{[\q_a,\q_b]}\left(-\frac{1}{\alpha}\bar{\zz}\right).
\end{equation}
\end{subequations}
 The state \eqref{eq: optimal state} and adjoint \eqref{eq: adjoint state} equations should be understood
 in the weak sense \revB{\eqref{eq:weak_state} and \eqref{eq:weak_formulation_adjoint} respectively.}
\end{theorem}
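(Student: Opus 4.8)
The plan is to treat the reduced problem \eqref{optimal problem reduced} as a convex optimization problem in Banach spaces and to invoke the standard Karush--Kuhn--Tucker theory, for which \Cref{Slater} supplies the constraint qualification. The reduced cost $j$ is convex and Fr\'echet differentiable, being the composition of the convex quadratic $J$ with the bounded linear operator $S$; the admissible set $\Q_{ad}$ is closed and convex; and, writing the state constraint as $\beta-\mathcal{G}(\q)\in C_+$ with $C_+=\{v\in C(\bar I):v\ge 0\}$ the closed convex cone of nonnegative continuous functions, the cone $C_+$ has nonempty interior in $C(\bar I)$. Since \Cref{Slater} produces a point $\tilde\q$ for which $\beta-\mathcal{G}(\tilde\q)$ lies in the interior of $C_+$, convex KKT theory furnishes a Lagrange multiplier $\bar\mu\in(C(\bar I))^*\cong\mathcal{M}(\bar I)$, and the sign of the inequality constraint forces $\bar\mu\ge 0$. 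This already yields the feasibility $\Gw(\bar\uu)\le\beta$ and the complementarity relation $\langle\bar\mu,\beta-\Gw(\bar\uu)\rangle=0$, i.e.\ \eqref{eq: Complementary conditions}.

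First I would record the stationarity of the Lagrangian $\q\mapsto j(\q)+\langle\bar\mu,\mathcal{G}(\q)-\beta\rangle$ over $\Q_{ad}$, namely
\begin{equation*}
  j'(\bar\q)(\delta\q-\bar\q)+\langle\bar\mu,\mathcal{G}'(\bar\q)(\delta\q-\bar\q)\rangle\ge 0\qquad\text{for all }\delta\q\in\Q_{ad}.
\end{equation*}
Since $S$ is linear, $S'(\bar\q)=S$ and $\mathcal{G}'(\bar\q)=\Gw\circ S$, so the derivatives evaluate to $j'(\bar\q)(\delta\q)=\IOprod{\bar\uu-\uu_d,S(\delta\q)}+\alpha\IOprod{\bar\q,\delta\q}$ and $\langle\bar\mu,\mathcal{G}'(\bar\q)(\delta\q)\rangle=\langle\bar\mu,\Oprod{S(\delta\q),\ww}\rangle$, the latter using the definition \eqref{eq: definition of G} of $\Gw$.

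Next I would introduce the adjoint state. The constraint term $\langle\bar\mu,\Oprod{S(\delta\q),\ww}\rangle=\int_{\bar I}\Oprod{S(\delta\q)(t),\ww}\,\d\bar\mu(t)$ is exactly the duality pairing of $S(\delta\q)\in C(\bar I;L^2(\Om)^d)$ with the $L^2(\Om)^d$-valued measure $\bar\mu\,\ww\in\mathcal{M}(\bar I;L^2(\Om)^d)$. Setting $\bar\zz:=S^*(\bar\uu-\uu_d+\bar\mu\,\ww)$, \Cref{cor:adjoint_sol_operator_regularity} applied to the measure part and \Cref{thm:regularity_sol_operator} applied to the $L^2$ part deliver the asserted regularity $\bar\zz\in L^2(I;\V)\cap L^\infty(I;\Vz)$ and identify both pairings as $\IOprod{\bar\zz,\delta\q}$. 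The stationarity condition then collapses to $\IOprod{\alpha\bar\q+\bar\zz,\delta\q-\bar\q}\ge 0$ for all $\delta\q\in\Q_{ad}$, which is \eqref{eq: Variational inequality}; the equivalent pointwise identity $\bar\q=\Pad(-\tfrac1\alpha\bar\zz)$ follows from the usual componentwise analysis of a box-constrained variational inequality. By construction $\bar\zz$ solves the adjoint system \eqref{eq: adjoint state} in the weak sense \eqref{eq:weak_formulation_adjoint} with inhomogeneity $\bar\uu-\uu_d+\bar\mu\,\ww$, and the terminal datum $\bar\zz(T)=\bar\mu(\{T\})\ww$ records the contribution of the atom of $\bar\mu$ at the right endpoint of $\bar I$. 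The converse implication is immediate: for the convex problem any $\bar\q$ admitting $\bar\zz,\bar\mu$ satisfying \eqref{eq: optimal state}--\eqref{eq: Variational inequality} renders the Lagrangian stationary at a feasible point and hence minimizes $j$ over the feasible set.

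The step I expect to be the main obstacle is the low regularity of the multiplier. Because $\bar\mu$ is only a measure and may charge the endpoint $t=T$, the adjoint right-hand side $\bar\uu-\uu_d+\bar\mu\,\ww$ lies in $\mathcal{M}(\bar I;L^2(\Om)^d)$ rather than in a Bochner $L^p$-space, so \eqref{eq: adjoint state} must be interpreted through the adjoint operator $S^*$, and the terminal condition must be read off from the atomic part $\bar\mu(\{T\})$. Justifying rigorously that $\bar\mu\,\ww$ is an admissible datum and that $S^*$ acts on it with the mapping property $S^*\colon\mathcal{M}(\bar I;L^2(\Om)^d)\to L^2(I;\V)\cap L^\infty(I;\Vz)$ of \Cref{cor:adjoint_sol_operator_regularity} is the technical heart of the argument, and is precisely what legitimizes the identity $\langle\bar\mu,\Oprod{S(\delta\q),\ww}\rangle=\IOprod{\bar\zz,\delta\q}$ used above.
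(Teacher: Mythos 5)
Your proposal follows essentially the same route as the paper: both invoke the generalized convex KKT theory under the Slater condition to obtain $\bar\mu\in(C(\bar I))^*$, define the adjoint state as $\bar\zz=S^*(\bar\uu-\uu_d+\bar\mu\,\ww)$ via the factorization $\mathcal{G}'(\bar\q)^*=S^*\circ G_\ww^*$, read off its regularity from \Cref{cor:adjoint_sol_operator_regularity}, and translate the cone condition $\langle v-\mathcal{G}(\bar\q),\bar\mu\rangle\le 0$ for all $v\in\mathcal{K}$ into the complementarity conditions \eqref{eq: Complementary conditions}. The argument is correct and matches the paper's proof in all essential steps.
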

\begin{proof}
  We derive the optimality conditions from the reduced problem \eqref{optimal problem reduced}.
  By the generalized KKT condition (cf. \cite[Theorem 5.2]{casas_boundary_1993}) and the Slater condition
  from Assumption \ref{Slater}, i.e.,
  the existence of $\tilde{\q}\in \Q_{ad}$ such that $\mathcal{G}(\tilde{\q})\in \operatorname{int}\mathcal{K}$,
  the optimality of $\bar{\q}$ is equivalent to the existence of a Lagrange multiplier 
  $\bar{\mu}\in C(\bar{I})^*$ and the adjoint state 
\begin{equation}\label{eq: definition of z from S*}
\bar{\zz}= S'(\bar{\q})^*(S(\bar{\q})-\uu_d)+\mathcal{G}'(\bar{\q})^*\bar{\mu}
\end{equation}
 satisfying 
 $$
 \left(\alpha\bar{\q}+\bar{\zz}, \delta \q-\bar{\q}\right)\geq 0 \quad  \forall \delta \q\in \Q_{ad} \quad \text{and}\quad \langle v
 -\mathcal{G}(\bar{\q}),\bar{\mu}\rangle\le 0\quad \forall v\in \mathcal{K}. 
 $$
 By definition and linearity of the involved operators,
 we can write $\mathcal{G}'(\bar \q)^* = S^* \circ G_{\mathbf w}^*$, where 
 $G_{\mathbf w}^*\colon C(\bar I)^* \to C(\bar I; L^2(\Om)^d)^*$, $G_{\mathbf w}^*(\mu) = \mu \ww$.
 Thus it holds $\bar \zz = S^*(\bar \uu - \uu_d + \bar \mu \ww)$, which gives the proposed 
 regularity of $\bar \zz$ as a consequence of \Cref{cor:adjoint_sol_operator_regularity}.
% From the definitions of $S$ and $\mathcal{G}$, we have $S'(\bar{\q})^*:  L^2(I;L^2(\Om)^d) \to L^2(I;\V)$ and $\mathcal{G}'(\q)^*:C(\bar{I})^*\to L^2(I;\V)$, which  implies that $\bar{\zz}\in L^2(I;\V)$.  Notice that the definition of $\zz$ in \eqref{eq: definition of z from S*} is equivalent to the solution \eqref{eq: adjoint state}.
To complete the proof, we point to the following equivalence
$$
(S(\bar \q) \in \mathcal K) \land (\langle v-\mathcal{G}(\bar{\q}),\bar{\mu}\rangle\le 0\quad \forall v\in \mathcal{K})
\quad \Leftrightarrow \quad
(G_{\ww}(\bar \uu) \le \beta) \land
(\bar{\mu}\geq 0) \land (\langle \beta-{G}(\bar{\uu}),\bar{\mu}\rangle= 0),
$$
which is exactly \eqref{eq: Complementary conditions}.
\end{proof}

%\subsection{Regularity}

From the first order optimality system \eqref{eq: optimal state}-\eqref{eq: Complementary conditions} we can derive the following regularity results.  

\begin{theorem}\label{thm: regularity optimal}
Let $\bar{\q}$ denote the optimal  control for the problem \eqref{eq:optimal_problem}. Then the following regularity holds:
$$
\bar{\q}\in L^2(I;H^1(\Om)^d)\cap \revB{L^\infty(I;L^2(\Om)^d)}.
$$
\revB{%
If additionally, $\q_a,\q_b \in \R^d$, i.e. are finite, it holds 
$\bar{\q}\in \revC{L^\infty(I \times \Om)^d}$.
}
\end{theorem}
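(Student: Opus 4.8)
The plan is to read off all three regularity assertions from the projection characterization $\bar{\q} = \Pad(-\frac{1}{\alpha}\bar{\zz})$ in the variational inequality \eqref{eq: Variational inequality}, combined with the adjoint regularity $\bar{\zz}\in L^2(I;\V)\cap L^\infty(I;\Vz)$ furnished by \Cref{cor:adjoint_sol_operator_regularity}. Since $\V\hookrightarrow H^1_0(\Om)^d$ and $\Vz\hookrightarrow L^2(\Om)^d$, every claimed property of $\bar{\q}$ should follow from the corresponding property of $\bar{\zz}$ via the structural behaviour of the box projection $\Pad$, which acts componentwise and pointwise in $(t,x)$ and is nonexpansive on $\R^d$.

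For the bound $\bar{\q}\in L^\infty(I;L^2(\Om)^d)$ I would use nonexpansiveness: picking any finite constant $\bc$ in the (possibly unbounded) box $[\q_a,\q_b]$, which exists because $\q_a<\q_b$, and using $\Pad(\bc)=\bc$, one obtains for a.e. $t$ the estimate $\twonorm{\bar{\q}(t)}\le \frac{1}{\alpha}\twonorm{\bar{\zz}(t)}+2\twonorm{\bc}$, whose essential supremum over $I$ is finite by $\bar{\zz}\in L^\infty(I;\Vz)$. For $\bar{\q}\in L^2(I;H^1(\Om)^d)$ I would invoke the classical truncation (Stampacchia) result: the composition of an $H^1(\Om)$ function with a scalar Lipschitz map stays in $H^1(\Om)$ and obeys the chain rule, so clipping to an interval does not increase the modulus of the gradient. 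Applied componentwise with Lipschitz constant one, this yields $\bar{\q}(t)\in H^1(\Om)^d$ with $\twonorm{\nabla\bar{\q}(t)}\le\frac{1}{\alpha}\twonorm{\nabla\bar{\zz}(t)}$ for a.e. $t$; squaring, integrating over $I$, and adding the already established $L^2(I;L^2)$ control then gives the assertion, once one checks strong measurability of $t\mapsto\bar{\q}(t)$ as an $H^1$-valued map, which follows from continuity of $\Pad$ from $H^1(\Om)^d$ to itself.

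The final claim, under finite $\q_a,\q_b\in\R^d$, is immediate: the range of $\Pad$ is contained in the bounded box, so $\bar{\q}(t,x)\in[\q_a,\q_b]$ for almost every $(t,x)\in I\times\Om$ and hence $\bar{\q}\in L^\infty(I\times\Om)^d$. I expect the main obstacle to be the $L^2(I;H^1)$ step, where one must rigorously justify that the pointwise-in-$x$ box projection preserves spatial $H^1$ regularity with a non-increasing gradient and that this composition is well behaved in the Bochner sense over time.
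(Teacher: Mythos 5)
Your proposal is correct and follows essentially the same route as the paper: the paper's proof likewise reads all three assertions off the projection formula $\bar{\q}=P_{[\q_a,\q_b]}(-\frac{1}{\alpha}\bar{\zz})$ together with $\bar{\zz}\in L^2(I;\V)\cap L^\infty(I;\Vz)$ from \Cref{cor:adjoint_sol_operator_regularity} and the constancy of $\q_a,\q_b$. Your write-up merely makes explicit the details (nonexpansiveness, Stampacchia truncation, Bochner measurability) that the paper leaves implicit.
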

\begin{proof}
Since ${\bf w}\in L^2(\Om)^d$ and $\bar{\mu}\in (C(\bar{I}))^*$, we have that 
$\bar{\mu}\ww\in ( C(\bar{I};L^2(\Omega)^d))^*$. 
By \Cref{cor:adjoint_sol_operator_regularity}, it holds 
$\bar \zz = S^*(\bar \uu - \uu_d + \bar \mu \ww) \in L^2(I;\V)\cap L^\infty(I;L^2(\Om)^d)$.
%Since the solution operator $S: L^2(I;\V)\to  C(\bar{I};L^2(\Omega)^d)$, we have that the adjoint operator $S^*:  \bar{\uu}-\uu_d+\bar{\mu}\ww\to \bar{\zz}$, 
%\begin{itemize}
%\item  $S^*:  C(\bar{I};L^2(\Omega)^d)^*\to L^2(I;\V)$
%\item $S^*:  C(\bar{I};L^2(\Omega)^d)^*\to L^\infty(I;L^2(\Om)^d)$
%\end{itemize}
%and as a result $\bar{\zz}\in L^2(I;\V)\cap L^\infty(I;L^2(\Om)^d)$. 
Using that 
$$
\bar{\q}= P_{[\q_a,\q_b]} \left(-\frac{1}{\alpha}\bar{\zz}\right),
$$
and $\q_a, \q_b$ are constant, we have the theorem.
\end{proof}
  The above result shows the regularity available for the optimal control $\bar \q$ without any additional
  assumptions. If one assumes higher regularity of the data, especially of the weight $\ww$ of the 
  state constraint, we can show improved regularity, and in some cases even a structural result, that 
  the Lagrange multiplier $\bar \mu$ contains no Dirac contributions. We first treat a general adjoint 
  problem with measure valued right hand side in \Cref{thm:adjoint_measure_regularity}, before 
  showing regularity of the optimal variables in \Cref{thm:optimal_regularity_higher_smoothness} and the 
  corollaries thereafter.
% \textcolor{red}{\begin{remark}
% The above result does not show  any a priori  time smoothness of $\bar{\q}$, only boundedness.
% Nevertheless, as the main result \Cref{thm:main_result} will show,
% we can establish almost half order convergence in time for the fully discrete solution.  
% \end{remark}}
%   With these regularity results for the Stokes equation in a standard setting for the right hand side, 
%   we are now able to show results for the case, where we have a measure in time as right hand side.
%   Recall that this is the case in the equation defining the adjoint state in our first order necessary
%   optimality conditions \eqref{eq: optimal state}-\eqref{eq: Variational inequality}.
  \begin{theorem}\label{thm:adjoint_measure_regularity}
    Let $\mu \in C(\bar I)^*$ and \revD{$\ww \in L^2(\Om)^d$} be given,
    and let $\zz \in L^2(I;\V) \cap L^\infty(I;\Vz)$  
    be the weak solution to
    \begin{equation}\label{eq:weak_formulation_adjoint_general_measure_rhs}
      \IOpair{\partial_t \vv, \zz} + \IOprod{\nabla \vv,\nabla \zz} 
      = \IOpair{\vv,\ww \mu} \quad \text{for all } \vv \in \X.
    \end{equation}
    Then, if additionally $\ww \in \V$, it holds $\zz \in L^s(I;\V) \cap \BV(I;\V^*)$ 
    for all $1 \le s < \infty$.
    Moreover, if $\ww \in \Vt$, it holds $\zz \in L^s(I;\Vt) \cap \BV(I;\Vz)$ for all $1 \le s < \infty$.
  \end{theorem}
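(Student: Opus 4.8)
The plan is to separate the two assertions in each case: the spatial regularity $\zz \in L^s(I;\V)$ (resp. $L^s(I;\Vt)$) and the temporal regularity $\zz \in \BV(I;\V^*)$ (resp. $\BV(I;\Vz)$). I would obtain the spatial part by lifting the right hand side with the fractional Stokes operator and invoking \Cref{cor:adjoint_sol_operator_regularity}, and the temporal part by reading off a vector-measure identity for $\partial_t \zz$ directly from the equation, using the spatial bounds already established.

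For the spatial regularity the key observation is that $A^{\frac12}$ commutes with the backward solution operator $S^*$, since it commutes with $\partial_t$ and with the Stokes operator and acts as an isometric isomorphism along the scale $\Vt \hookrightarrow \V \hookrightarrow \Vz \hookrightarrow \V^*$. Concretely, if $\ww \in \V = D(A^{\frac12})$, I would set $\yy := A^{\frac12}\zz$ and argue that $\yy$ is the weak solution of \eqref{eq:weak_formulation_adjoint_general_measure_rhs} with right hand side $(A^{\frac12}\ww)\,\mu$. Since $A^{\frac12}\ww \in \Vz \hookrightarrow L^2(\Om)^d$, the object $(A^{\frac12}\ww)\mu$ lies in $\mathcal M(\bar I;L^2(\Om)^d)$ with total variation $\|A^{\frac12}\ww\|_{L^2(\Om)}\,|\mu|(\bar I)$, so \Cref{cor:adjoint_sol_operator_regularity} yields $\yy \in L^2(I;\V)\cap L^\infty(I;\Vz)$. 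Applying $A^{-\frac12}$, the $L^\infty(I;\Vz)$ bound gives $\zz \in L^\infty(I;\V) \hookrightarrow L^s(I;\V)$ for every $1 \le s < \infty$. The case $\ww \in \Vt = D(A)$ is identical with $A^{\frac12}$ replaced by $A$: then $A\ww \in \Vz$, the lift $\yy := A\zz$ solves the adjoint equation with right hand side $(A\ww)\mu \in \mathcal M(\bar I;L^2(\Om)^d)$, and \Cref{cor:adjoint_sol_operator_regularity} gives $\zz \in L^\infty(I;\Vt) \hookrightarrow L^s(I;\Vt)$.

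For the temporal regularity I would use the spatial bounds just obtained together with the equation rewritten as the identity $\partial_t \zz = A\zz - \ww\mu$. In the case $\ww \in \V$, the spatial result $\zz \in L^\infty(I;\V)$ and the isomorphism $A \colon \V \to \V^*$ give $A\zz \in L^\infty(I;\V^*)$, which contributes an absolutely continuous part to $\partial_t \zz$ with bounded $\V^*$-density, hence of finite variation on the bounded interval $I$; moreover $\ww \in \V \hookrightarrow \V^*$ gives $\ww\mu \in \mathcal M(\bar I;\V^*)$ with variation bounded by $\|\ww\|_{\V^*}\,|\mu|(\bar I)$. Thus $\partial_t \zz$ is a $\V^*$-valued Borel measure of finite total variation, i.e. $\zz \in \BV(I;\V^*)$. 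The case $\ww \in \Vt$ is analogous one level higher on the scale: $\zz \in L^\infty(I;\Vt)$ gives $A\zz \in L^\infty(I;\Vz)$, and $\ww \in \Vt \hookrightarrow \Vz$ gives $\ww\mu \in \mathcal M(\bar I;\Vz)$, so $\partial_t \zz \in \mathcal M(\bar I;\Vz)$ and $\zz \in \BV(I;\Vz)$.

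I expect the main obstacle to be the rigorous justification of the commutation step, namely that $A^{\frac12}\zz$ (resp. $A\zz$) really is the weak solution of \eqref{eq:weak_formulation_adjoint_general_measure_rhs} associated with the lifted data $(A^{\frac12}\ww)\mu$ (resp. $(A\ww)\mu$). Because the right hand side is only a measure in time and the test space $\X$ is not preserved by $A^{\frac12}$, this cannot be read off from a naive substitution $\vv \mapsto A^{\frac12}\vv$; I would instead verify it through the duality characterization $\langle S^*\bg, \q\rangle = \langle \bg, S\q\rangle$, using the self-adjointness of $A^{\frac12}$ on $D(A^{\frac12})$ and the forward commutation $A^{\frac12} S\q = S(A^{\frac12}\q)$ for smooth $\q$ followed by a density argument, or equivalently by exhibiting the Duhamel representation $\zz(t) = \int_{[t,T]} e^{-(s-t)A}\ww \,\d\mu(s)$ and exploiting that the analytic Stokes semigroup commutes with fractional powers of $A$. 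A secondary point requiring care is the precise interpretation of $\partial_t \zz = A\zz - \ww\mu$ as an equality of $\V^*$- (resp. $\Vz$-)valued measures on the closed interval $\bar I$, so that atoms of $\mu$ at the endpoints $t=0$ or $t=T$ are correctly accounted for in the variation.
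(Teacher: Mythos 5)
Your argument is correct in outline, but it takes a genuinely different route from the paper. The paper represents $\mu$ by a normalized $\BV$ function $\tilde\mu$ via a Riemann--Stieltjes integral, integrates by parts in time to convert $\IOpair{\vv,\ww\mu}$ into $-\IOpair{\tilde\mu\ww,\partial_t\vv}$, and then shifts the unknown, $\hat\zz:=\zz+\tilde\mu\ww$; the shifted function solves a backward Stokes problem with right-hand side $\tilde\mu A\ww\in L^\infty(I;\V^*)$ (resp.\ $L^\infty(I;\Vz)$), to which maximal parabolic regularity \eqref{eq:max_par_reg_dual} (resp.\ \eqref{eq:max_par_reg_L2}) applies, and both the $L^s$ and the $\BV$ assertions fall out of the single decomposition $\zz=\hat\zz-\tilde\mu\ww$. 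You instead commute $A^{\frac{1}{2}}$ (resp.\ $A$) through $S^*$ so as to trade smoothness of $\ww$ for smoothness of $\zz$, reducing the spatial claim to \Cref{cor:adjoint_sol_operator_regularity}, and you obtain $\BV$ separately by reading $\partial_t\zz=A\zz-\ww\mu$ off the equation as a $\V^*$- (resp.\ $\Vz$-)valued measure of finite total variation. Your route avoids maximal parabolic regularity altogether and even yields the slightly stronger conclusion $\zz\in L^\infty(I;\V)$ (resp.\ $L^\infty(I;\Vt)$), which is consistent with the Duhamel representation. Its price is the commutation lemma for measure data, which you correctly single out as the crux; the clean way to close it is to define $\yy:=S^*\bigl((A^{\frac{1}{2}}\ww)\mu\bigr)$ first (well defined by \Cref{cor:adjoint_sol_operator_regularity}) and verify $A^{-\frac{1}{2}}\yy=S^*(\ww\mu)$ by testing against $\q\in L^1(I;\Vz)$, using $(A^{\frac{1}{2}}\ww,A^{-\frac{1}{2}}S\q(t))_{\Omega}=(\ww,S\q(t))_{\Omega}$ together with $A^{-\frac{1}{2}}S\q=S(A^{-\frac{1}{2}}\q)$ for divergence-free $\q$; this direction avoids having to know a priori that $\zz(t)\in D(A^{\frac{1}{2}})$. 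Two caveats: your $\BV$ step tacitly invokes the characterization of $\BV(I;X)$ through a distributional time derivative of finite variation on the open interval, which should be stated explicitly (cf.\ \cite{heida_topologies_2019}); and the paper's decomposition $\zz=\hat\zz-\tilde\mu\ww$ with $\hat\zz\in C(\bar I;\Vz)$ is reused verbatim in \Cref{thm:optimal_regularity_higher_smoothness} to isolate the jump of the adjoint as $[\tilde\mu]\ww$, so if one adopts your proof here, that structural information has to be re-derived there.
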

  \begin{proof}
    Throughout this proof, let $\ww$ satisfy at least $\ww \in \V$.
  Using \cite[Theorem 4.31]{appell_bounded_2013}, there exists a normalized function 
  of bounded variation $\tilde \mu \in \NBV(\bar I)$, such that 
  the application of $\mu \in C(\bar I)^*$ to any $\xi \in C(\bar I)$ 
  can be expressed as a Riemann-Stieltjes integral
  \begin{equation*}
    \revision{\Ipair{\xi,\mu}} = \int_I \xi(t) \ \d \tilde \mu(t).
  \end{equation*}
  Slightly modifying \cite[Definition 1.2]{appell_bounded_2013}, we can normalize $\tilde \mu$
  such that it is of bounded variation, right continuous and satisfies $\tilde \mu(T) = 0$.
%  According to \cite[Definition 1.2]{appell_bounded_2013} $\tilde \mu$ is of bounded variation, 
%  right continuous and satisfies $\tilde \mu(0) = 0$.
%  \todo{The normalization condition $\mu(0)=0$ in $\NBV$ is arbitrary, we could also choose $\mu(T) = 0$,
%  which would get rid of the terminal condition?}
  If $\xi \in C^1(\bar I)$, due to \cite[Proposition 4.24, Theorem 4.17]{appell_bounded_2013}, there holds
  \begin{equation*}
    \revision{\Ipair{\xi, \mu}} 
    = - \int_I \tilde \mu(t)\ \d \xi(t) + \tilde \mu(T) \xi(T) - \tilde \mu(0) \xi(0)
    = - \int_I \tilde \mu(t) \partial_t \xi(t) \ \d t -\tilde \mu(0) \xi(0) %+ \tilde \mu(T) \xi(T).
  \end{equation*}
  where the last integral can be understood equivalently in the Riemann or Lebesgue sense.
  Working with the Lebesque integral allows us to pass to the limit 
  $C^1(\bar I) \ni \xi_n \to \xi \in W^{1,1}(I) \hookrightarrow C(\bar I)$, showing that 
  \begin{equation*}
    \Ipair{\xi, \mu} 
    = - \Iprod{\tilde \mu, \partial_t \xi} - \tilde \mu(0) \xi(0) %+ \tilde \mu(T) \xi(T)
    \qquad \forall \xi \in W^{1,1}(I).
  \end{equation*}
  As due to the definition of $\X$, it holds 
  $\partial_t \vv \in L^1(I;\Vz) + L^2(I;\V^*)$
  and $\vv(0) = \oo$ for all $\vv \in \X$, and since $\ww \in \V$, we have $\Opair{\vv,\ww} \in W^{1,1}(I)$.
  Thus we obtain
  \begin{equation}\label{eq:stateconstraint_identity_new}
    \IOpair{\vv,\ww \mu}
    = - \IOpair{\tilde \mu(t) \ww, \partial_t \vv}. % + \tilde \mu(T) \Oprod{\ww,\vv(T)}.
  \end{equation}
Let us define $\hat \zz =  \zz + \tilde \mu \ww$. Then adding
\eqref{eq:weak_formulation_adjoint_general_measure_rhs}, \eqref{eq:stateconstraint_identity_new}
and the identity 
  $\IOprod{\nabla \vv, \tilde \mu\nabla \ww} = \IOpair{\tilde \mu A \ww,\vv}$ yields
  \begin{equation*}
    \IOpair{\partial_t \vv,\hat \zz} + \IOprod{\nabla \vv,\nabla \hat \zz}
    = \IOpair{\tilde \mu A \ww,\vv}
    %+ \tilde \mu(T) \Oprod{\ww,\vv(T)} 
    \quad \forall \vv \in \X.
  \end{equation*}
  As $\tilde \mu \in L^\infty(I)$ and $\tilde \mu(T) < \infty$, $\hat \zz$ satisfies a backwards in time 
  Stokes equation with right hand side $\tilde \mu A \ww$.
  %and terminal condition $\tilde \mu(T) \ww$.
  Depending on the regularity of $\ww$, we obtain the following:\\
  \textit{Case 1:} $\ww \in \V$. Here $\tilde \mu \ww \in \BV(I;\V) \hookrightarrow L^\infty(I;\V)$ 
      and thus $\tilde \mu A \ww \in L^\infty(I;\V^*)$. % and $\tilde \mu(T) \ww \in \V$,
      Using the maximal parabolic regularity in $\V^*$ \eqref{eq:max_par_reg_dual}, this yields
      \begin{equation*}
        \hat \zz \in W^{1,s}(I;\V^*) \cap L^s(I;\V) \quad \text{for any } 1 \le s < \infty.
      \end{equation*}
      Considering the special cases $s=1$ and $s=2$, we obtain
      \begin{equation*}
        \hat \zz \in W^{1,1}(I;\V^*) \hookrightarrow \BV(I;\V^*) \quad \text{and} \quad
        \hat \zz \in H^{1}(I;\V^*) \cap L^2(I;\V) \hookrightarrow C(\bar I;\Vz).
      \end{equation*}
      As a consequence $ \zz = \hat \zz - \tilde \mu \ww \in L^s(I;\V) \cap \BV(I;\V^*)$ for any 
      $1 \le s < \infty$.
      \\
      \textit{Case 2:} $\ww \in \Vt$. Here $\tilde \mu \ww \in \BV(I;\Vt) \hookrightarrow L^\infty(I;\Vt)$
      and thus $\tilde \mu A \ww \in L^\infty(I;\Vz)$. % and $\tilde \mu(T) \ww \in \Vt$,
      Using the maximal parabolic regularity in $\Vz$ \eqref{eq:max_par_reg_L2}, this yields
      \begin{equation*}
        \hat \zz \in W^{1,s}(I;\Vz) \cap L^s(I;\Vt) \quad \text{for any } 1 \le s < \infty.
      \end{equation*}
      Considering the special cases $s=1$ and $s=2$, we obtain
      \begin{equation*}
        \hat \zz \in W^{1,1}(I;\Vz) \hookrightarrow \BV(I;\Vz) \quad \text{and} \quad
        \hat \zz \in H^{1}(I;\Vz) \cap L^2(I;\Vt) \hookrightarrow C(\bar I;\V).
      \end{equation*}
      As a consequence $ \zz = \hat \zz - \tilde \mu \ww \in L^s(I;\Vt) \cap \BV(I;\Vz)$ for any 
      $1 \le s < \infty$.
  \end{proof}
  Since in the above proof we only have shown continuity of $\hat \zz$, but $\tilde \mu$ in general can 
  have jumps as it is a $\BV$ function, without further information, we cannot deduce continuity of 
  the whole solution $\zz$. For the adjoint equation \eqref{eq: adjoint state} this translates to the question
  of continuity in time of $\bar \zz$ and by \eqref{eq: Variational inequality} also of $\bar \q$.
  If in addition to the arguments of the previous theorem, we exploit the information from the 
  optimality system, it turns out, that we can at least show continuity in time of $\bar \q$.
 %  In the optimal adjoint state we can exploit the addional information of the 
 %  optimality system in order to obtain this regularity.
  As the adjoint state in the optimality system 
  has another term $\uu - \uu_d$ on the right hand side, depending on the available regularity of $\uu_d$ 
  in time, we might lose some of the time regularity that we just derived.
  \begin{theorem}\label{thm:optimal_regularity_higher_smoothness}
    Let $(\bar \q,\bar \uu,\bar \zz,\bar \mu)$ satisfy the first order necessary optimality conditions
    \eqref{eq: optimal state}-\eqref{eq: Variational inequality}.
    Let additionally $\uu_d \in L^s(I;L^2(\Om)^d)$ for some $s \in [2,\infty)$ and $\ww \in \V$.
%     and the control constraints be strictly satisfied a.e. in $I \times \Om$.
%     Then $\bar \mu(\{t\}) = 0$ 
%     for any $t \in I$, i.e. $\bar \mu$ does not contain any atomic components/Dirac measures, and it holds
    Then $\bar \zz \in L^s(I;\V) \cap \BV(I;\V^*)$ and
    $\bar \q \in L^s(I;H^1(\Om)^d) \cap C(\bar I;L^2(\Om)^d)$.
    % $\bar \zz, \bar \q \in L^s(I;\V) \cap C(\bar I;\Vz)$.
   % and $\bar \mu(\{t\}) = 0$ 
   % for any $t \in I$, i.e. $\bar \mu$ does not contain any atomic components/Dirac measures.
    % If further $\ww \in \Vt$, then $\bar \zz, \bar \q \in L^s(I;\Vt) \cap C(\bar I;\V)$.
    If further $\ww \in \Vt$, then $\bar \zz \in L^s(I;\Vt) \cap \BV(I;\Vz)$ and 
    $\bar \q \in L^s(I;W^{1,\infty}(\Om)^d) \cap C(\bar I;H^1(\Om)^d)$.
%    \begin{align*}
%      \ww \in \V  \Rightarrow \bar \zz, \bar \q \in L^s(I;\V) \cap C(\bar I;\Vz)
%      \quad \text{and further} \quad
%      \ww \in \Vt \Rightarrow \bar \zz, \bar \q \in L^s(I;\Vt) \cap C(\bar I;\V).
%    \end{align*}
  \end{theorem}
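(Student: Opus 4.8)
The plan is to split the adjoint state according to the two contributions on the right-hand side of the adjoint equation \eqref{eq: adjoint state} and treat them separately. I would write $\bar{\zz}=\bar{\zz}_1+\bar{\zz}_2$, where $\bar{\zz}_1=S^*(\bar{\mu}\ww)$ carries the measure-valued source and $\bar{\zz}_2=S^*(\bar{\uu}-\uu_d)$ carries the regular source. For $\bar{\zz}_1$ I invoke \Cref{thm:adjoint_measure_regularity} verbatim: for $\ww\in\V$ it yields $\bar{\zz}_1\in L^s(I;\V)\cap\BV(I;\V^*)$, and for $\ww\in\Vt$ it yields $\bar{\zz}_1\in L^s(I;\Vt)\cap\BV(I;\Vz)$, in each case for all $1\le s<\infty$. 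For $\bar{\zz}_2$ I first note that \Cref{thm: regularity optimal} already gives $\bar{\q}\in L^\infty(I;L^2(\Om)^d)$, so by \Cref{thm:regularity_sol_operator} the state satisfies $\bar{\uu}\in L^s(I;H^2(\Om)^d)\cap C(\bar I;\V)$ for every finite $s$; hence $\bar{\uu}-\uu_d\in L^s(I;L^2(\Om)^d)$, with the exponent $s\in[2,\infty)$ limited only by $\uu_d$. Reversing time, $\bar{\zz}_2$ solves a backward Stokes system with this source, so the maximal parabolic regularity \eqref{eq:max_par_reg_L2} together with the $H^2$-regularity on the convex domain gives $\bar{\zz}_2\in W^{1,s}(I;\Vz)\cap L^s(I;\Vt)\hookrightarrow C(\bar I;\V)$. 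As $\bar{\zz}_2$ is the smoother summand in both cases, adding the two parts produces exactly the claimed regularity for $\bar{\zz}$.

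The spatial regularity of $\bar{\q}$ follows from the projection formula \eqref{eq: Variational inequality}, $\bar{\q}=\Pad(-\tfrac1\alpha\bar{\zz})$. Since $\Pad=\min\{\q_b,\max\{\q_a,\cdot\}\}$ is Lipschitz with constant bounds, the Stampacchia chain rule maps $H^1(\Om)^d$ into itself with $|\nabla\bar{\q}|\le\tfrac1\alpha|\nabla\bar{\zz}|$ a.e.; integrating the $s$-th power in time turns $\bar{\zz}\in L^s(I;\V)$ into $\bar{\q}\in L^s(I;H^1(\Om)^d)$. In the smoother case, the same pointwise bound together with the embedding $\Vt\hookrightarrow W^{1,\infty}(\Om)^d$ (valid for $d\le 3$) upgrades this to $\bar{\q}\in L^s(I;W^{1,\infty}(\Om)^d)$.

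The main obstacle is the time continuity of $\bar{\q}$, since $\bar{\zz}$ is only $\BV$ in time and genuinely jumps at the atoms of $\bar{\mu}$. Here I reuse the decomposition from the proof of \Cref{thm:adjoint_measure_regularity}: with the normalized $\tilde{\mu}\in\NBV(\bar I)$ associated to $\bar{\mu}$ one has $\bar{\zz}=\hat{\zz}-\tilde{\mu}\ww$, where $\hat{\zz}$ (the sum of $\bar{\zz}_2$ and the continuous remainder of $\bar{\zz}_1$) is continuous in time into $\Vz$. Consequently $\bar{\zz}$, and hence $\bar{\q}=\Pad(-\tfrac1\alpha\bar{\zz})$, is a regulated function possessing one-sided limits in $L^2(\Om)^d$ everywhere, and its only possible jumps sit at the atoms $t_0$ of $\bar{\mu}$, where the jump of the pre-image is $\tfrac1\alpha\bar{\mu}(\{t_0\})\ww$, a nonnegative multiple of $\ww$. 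Writing $[\bar{\q}](t_0)=\bar{\q}(t_0^+)-\bar{\q}(t_0^-)$ and using that $\Pad$ acts componentwise and is nondecreasing, the induced control jump satisfies $[\bar{\q}_i](t_0)\,\ww_i\ge 0$ pointwise, whence $\Oprod{[\bar{\q}](t_0),\ww}\ge 0$.

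To see that this jump in fact vanishes, I would test the weak state equation \eqref{eq:weak_state} with the fixed function $\ww\in\V$, which for $g(t):=\Gw(\bar{\uu})(t)=\Oprod{\bar{\uu}(t),\ww}$ gives $g'(t)=\Oprod{\bar{\q}(t),\ww}-\Oprod{\nabla\bar{\uu}(t),\nabla\ww}$ a.e.\ in $I$. Because $\bar{\uu}\in C(\bar I;\V)$, the second term is continuous in time, so $g'$ inherits one-sided limits at $t_0$ with $[g'](t_0)=\Oprod{[\bar{\q}](t_0),\ww}\ge 0$. On the other hand, the complementarity condition \eqref{eq: Complementary conditions} forces $\supp\bar{\mu}\subseteq\{t:g(t)=\beta\}$, so $g(t_0)=\beta$; since $g\le\beta$ on all of $\bar I$, the point $t_0$ is a global maximum of $g$, which forces $g'(t_0^-)\ge 0\ge g'(t_0^+)$ and hence $[g'](t_0)\le 0$. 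Combining the two inequalities yields $\Oprod{[\bar{\q}](t_0),\ww}=0$, and together with the pointwise sign $[\bar{\q}_i](t_0)\,\ww_i\ge 0$ and the fact that $[\bar{\q}_i](t_0)$ can be nonzero only where $\ww_i\neq 0$, this forces $[\bar{\q}](t_0)=0$. Thus $\bar{\q}$ has no jumps and, being regulated, is continuous into $L^2(\Om)^d$. In the case $\ww\in\Vt$ the analogous decomposition makes $\bar{\q}$ regulated into $H^1(\Om)^d$, and since its one-sided $H^1$-limits must coincide once they already coincide in $L^2(\Om)^d$, the same argument gives $\bar{\q}\in C(\bar I;H^1(\Om)^d)$.
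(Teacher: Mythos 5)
Your overall strategy coincides with the paper's: both proofs hinge on rewriting $\bar\mu$ via its $\NBV$ representative $\tilde\mu$, observing that $\hat\zz=\bar\zz+\tilde\mu\ww$ solves a backward Stokes problem with an $L^s$-in-time right-hand side (so that maximal parabolic regularity in $\V^*$ resp.\ $\Vz$ applies), and then showing that the only candidate jump times of $\bar\q=\Pad(-\tfrac1\alpha\bar\zz)$ are the atoms of $\bar\mu$, where the jump of the argument is $\tfrac1\alpha\bar\mu(\{t_0\})\ww$; combining $\Oprod{[\bar\q](t_0),\ww}\ge 0$ with the fact that $t_0$ is a maximizer of $\Gw(\bar\uu)$ (so that $[\partial_t\Gw(\bar\uu)](t_0)=\Oprod{[\bar\q](t_0),\ww}\le 0$) kills the jump. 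Where you deviate is the final step: the paper derives the quantitative chain $0\le\|[\bar\q]\|_{L^2(\Om)}^2\le\Oprod{[\bar\q],[\zz_\alpha]}=\tfrac{[\tilde\mu]}{\alpha}\Oprod{[\bar\q],\ww}$ and concludes $[\bar\q]=\oo$ directly from $\Oprod{[\bar\q],\ww}\le 0$, whereas you use componentwise monotonicity of $\Pad$ to get the pointwise sign $[\bar\q_i]\ww_i\ge 0$ and then deduce $[\bar\q]=\oo$ from $\Oprod{[\bar\q],\ww}=0$ together with the observation that $[\bar\q_i]$ vanishes wherever $\ww_i$ does. Both routes are valid; yours is slightly more elementary, the paper's inequality is more robust (it does not need the componentwise structure of the projection). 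Your splitting $\bar\zz=S^*(\bar\mu\ww)+S^*(\bar\uu-\uu_d)$ versus the paper's single combined equation for $\hat\zz$ is an immaterial difference.

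One concrete flaw: you justify $\bar\q\in L^s(I;W^{1,\infty}(\Om)^d)$ in the case $\ww\in\Vt$ by the embedding $\Vt\hookrightarrow W^{1,\infty}(\Om)^d$ ``for $d\le 3$'', but this embedding is false for $d=2,3$: Sobolev gives $H^2(\Om)\hookrightarrow W^{1,6}(\Om)$ in three dimensions and $W^{1,p}(\Om)$ for all finite $p$ in two dimensions, and $W^{2,p}\hookrightarrow W^{1,\infty}$ requires $p>d$. So the Stampacchia bound $|\nabla\bar\q|\le\tfrac1\alpha|\nabla\bar\zz|$ only yields $\bar\q(t)\in W^{1,6}(\Om)^d$ (resp.\ $W^{1,p}$, $p<\infty$), not $W^{1,\infty}$. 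To be fair, the paper's own proof offers no real argument for this particular sub-claim either (it only remarks that $\Pad$ caps the attainable spatial regularity at $W^{1,\infty}$), so this is as much a weakness of the statement as of your proof; but as written your justification of that one inclusion does not stand. Everything else, including the reduction of the $C(\bar I;H^1(\Om)^d)$ case to the already-established $L^2$ continuity of the one-sided limits, is sound.
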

  \begin{proof}
    %As before, throughout this proof we shall assume that it at least holds $\ww \in \V$.
    As in the proof of the previous theorem, we introduce the $\NBV$ function $\tilde \mu$ satisfying
    $\Ipair{\xi,\bar \mu} = - \Iprod{\tilde \mu,\partial_t \xi} - \tilde \mu(0)\xi(0)$ %+ \tilde \mu(T)\xi(T)$
    for all $\xi \in W^{1,1}(I)$,
    and introduce $\hat \zz = \bar \zz + \tilde \mu \ww$. This now satisfies
  \begin{equation*}
    \IOpair{\partial_t \vv,\hat \zz} + \IOprod{\nabla \vv,\nabla \hat \zz}
    = \IOprod{\uu - \uu_d,\vv} + \IOpair{\tilde \mu A \ww,\vv}
    %+ \tilde \mu(T) \Oprod{\ww,\vv(T)} 
    \quad \forall \vv \in \X.
  \end{equation*}
  The regularity $\bar \uu \in L^\infty(I;\Vz)$, the assumption $\uu_d \in L^s(I;L^2(\Om)^d)$ and the previous 
  arguments again yield that
  \begin{align*}
    \ww \in \V &\Rightarrow \hat \zz \in W^{1,s}(I;\V^*) \cap L^s(I;\V) 
    \hookrightarrow C(\bar I;\Vz) \cap \BV( I;\V^*) \ \text{ and } \
    \bar \zz \in L^s(I;\V),\\
    \ww \in \Vt &\Rightarrow \hat \zz \in W^{1,s}(I;\Vz) \cap L^s(I;\Vt) 
    \hookrightarrow C(\bar I;\V) \cap \BV( I;\Vz)
    \ \text{ and } \ \bar \zz \in L^s(I;\Vt).
  \end{align*}
%  To show continuity in time of $\bar \zz$, we have to show that $\tilde \mu \in C(\bar I)$, as we 
%  just showed continuity in time of $\hat \zz$ with different spacial regularities, depending on the 
%  regularity of $\ww$. To this end we consider the optimality condition for $\bar \q$. 
  As in \Cref{thm:adjoint_measure_regularity} this shows the $\BV$ regularity in time of $\bar \zz$.
  To show the regularity of $\bar \q$, we shall make use of the optimality conditions.
  The optimal control satisfies
  \begin{equation}\label{eq:control_representation}
    \bar \q = P_{[\q_a,\q_b]}(\zz_\alpha) \quad \text{where} \quad 
    \zz_\alpha := - \frac{1}{\alpha}\bar \zz =- \frac{1}{\alpha} \hat \zz + \frac{1}{\alpha}\tilde \mu \ww
    \in C(\revD{\bar I};\Vz) + \BV(I;\V).
  \end{equation}
  The available regularity of $\bar \zz$ immediately gives the claimed $L^s$ regularity in time of $\bar \q$.
  Note that due to the application of $\Pad$, $\bar \q$ is in general not divergence free anymore and 
  exhibits at most $W^{1,\infty}(\Om)$ regularity in space, even if $\bar \zz$ is smoother.
  We now turn towards showing continuity in time of $\bar \q$.
  Let us denote by $[\cdot ]$ the jump function w.r.t. time, i.e. $[\varphi](t) = \varphi(t+) - \varphi(t-)$.
  \revD{%
    Since functions in $\BV(I;X)$ for any Banach space $X$, possess well-defined onesided limits, 
    see \cite[Propositions 2.1 \& 2.2]{heida_topologies_2019}, we obtain from \eqref{eq:control_representation} 
    that $\zz_\alpha(t \pm) \in \Vz$ is well defined for any $t \in I$.
    Hence, by continuity of $\Pad\colon L^2(\Om)^d \to L^2(\Om)^d$ we also have
    $[\bar \q] = [\Pad(\zz_\alpha)] \in L^2(\Om)^d$ for all $t \in I$.}
%%   \revD{%
%%     Again, from \eqref{eq:control_representation}, we obtain that 
%%     $
%%       \bar \q \in C(\bar I;L^2(\Om)^d) + \BV(I;H^1(\Om)^d).
%%     $
%%     Since functions in $\BV(I;X)$ for any Banach space $X$, posess well defined onesided limits, 
%%     see \cite[Propositions 2.1 \& 2.2]{heida_topologies_2019}, this regularity implies, that $[\bar \q]$
%%     is well defined for every $t \in I$ as an element of $L^2(\Om)^d$,
%%     and it holds $[\bar \q] = [\Pad(\zz_\alpha)]$.}
%   The identity \eqref{eq:control_representation} shows that $\bar \q \in L^2(I;\revD{L^2(\Om)^d})$ 
%   has a representant,
%   which in the following we also denote by $\bar \q$, such that $[\bar \q]$ is well defined for every $t \in I$ 
%   as a limit in $\Vz$.
  % Thus from \eqref{eq:control_representation}, it holds $[\bar \q] = [\Pad(\zz_\alpha)]$.
  By distinguishing the different cases, it is straightforward to verify,
  that for any $\vv \in C(\bar I;L^2(\Om)^d) + \BV(I;L^2(\Om)^d)$ it holds
  \begin{equation*}
    0 \le 
    \Oprod{[\Pad(\vv)],[\Pad(\vv)]}
    \le \Oprod{[\Pad(\vv)],[\vv]}
    \le \Oprod{[\vv],[\vv]} \quad \text{for all } t \in I.
  \end{equation*}
  Applying this chain of inequalities to $\vv = \zz_\alpha$, using $\bar \q = \Pad(\zz_\alpha)$
  and the continuity in time of $\hat \zz$, we obtain
  \begin{equation}\label{eq:positive_jump}
    0 \le \|[\bar \q]\|_{L^2(\Om)}^2
    %= \Oprod{[\Pad(\zz_\alpha)],[\Pad(\zz_\alpha)]}
    % \le \Oprod{[\Pad(\zz_\alpha)],[\zz_\alpha]}
    \le \Oprod{[\bar \q],[\zz_\alpha]}
    = \frac{[\tilde \mu]}{\alpha} \Oprod{[\bar \q],\ww}.
  \end{equation}
%   As the control constraints are inactive, due to \eqref{eq: Variational inequality},
%   the optimal control satisfies
%   \begin{equation}\label{eq:control_representation}
%     \alpha \bar \q = - \bar \zz
%     = - \hat \zz + \tilde \mu \ww \in C(I;\Vz) + \BV(I;\V).
%   \end{equation}
% 
%   We wish to show, that the expression on the right hand side 
%   By the regularity shown so far, we have $\hat \zz - \tilde \mu \ww \in BV(I;\V)$ if $\ww \in \Vt$, and 
%   $\hat \zz - \tilde \mu \ww \in BV(I;\Vz)$ if $\ww \in \V$.
%   This shows $\bar q \in BV(I;\V)$ (or $BV(I;\Vz)$ respectively). 
%   \todo{No, as C does NOT imply BV}
%   Let us denote by $[\cdot ]$ the jump function, i.e. $[\varphi](t) = \varphi(t+) - \varphi(t-)$.
%   The identity \eqref{eq:control_representation} shows that $\bar \q \in L^2(I;\Vz)$ has a representant,
%   which in the following we also denote by $\bar \q$, such that $[\bar \q]$ is well defined for every $t \in I$ 
%   as a limit in $\Vz$.
%   As $\hat \zz$ is continuous, it holds $\alpha [\bar \q] = \ww [\tilde \mu]$.
% %  \todo{We actually might be able to do something here with $\ww \in \Vz$, as in the jump no $V^*$ 
% %  contribution would remain, as only the BV part in the jump remains.}
%   Since $[\bar \q] \in \Vz$ for all $t \in I$, we can multiply \eqref{eq:control_representation}
%   by $[\bar \q]$ and integrate 
%   over $\Om$, after which we obtain
%   \begin{equation*}
%     \alpha \|[\bar \q]\|_{L^2(\Om)}^2 = [\tilde \mu]\Oprod{[\bar \q],\ww}.
%   \end{equation*}
  For all $t \in I$ satisfying $[\tilde \mu](t) = 0$, this immediately shows $[\bar \q](t) = \oo$.
  Thus let us assume now, that there exists $t^* \in I$ with $[\tilde \mu](t^*) \neq 0$.
  As $\bar \mu \ge 0$, there holds for $t_1 < t_2$ due to 
  \cite[Theorem 4.17, Proposition 4.24]{appell_bounded_2013}
  \begin{equation*}
    0 \le \Ipair{\chi_{[t_1,t_2]},\bar \mu} = \int_{[t_1,t_2]} 1 \d \tilde \mu(t)
    = - \int_{[t_1,t_2]} \tilde \mu \ \d 1(t) + \tilde \mu(t_2) - \tilde \mu(t_1)
    = \tilde \mu(t_2) - \tilde \mu(t_1),
  \end{equation*}
  i.e. $\tilde \mu$ is monotonically increasing and therefore, it holds $[\tilde \mu] \ge 0$.
  Moreover it holds 
  \begin{equation}\label{eq:lagrange_multiplier_singleton}
  \begin{aligned}
    \bar \mu(\{t^*\}) 
    & = \lim_{\varepsilon \to 0} \bar \mu((t^*-\varepsilon,t^*+\varepsilon))
    = \lim_{\varepsilon \to 0} \Ipair{\chi_{(t^*-\varepsilon,t^*+\varepsilon)},\bar \mu}
    = \lim_{\varepsilon \to 0} \int_{(t^*-\varepsilon,t^*+\varepsilon)} 1 \d \tilde \mu\\
    & = \lim_{\varepsilon \to 0}-  \int_{(t^*-\varepsilon,t^*+\varepsilon)} \tilde \mu \ \d 1 
      + \tilde \mu(t^* + \varepsilon) - \tilde \mu(t^* - \varepsilon)
    = [\tilde \mu](t^*).
  \end{aligned}
  \end{equation}
  Thus if $[\tilde \mu](t^*) \neq 0$, then $[\tilde \mu](t^*) > 0$ and $t^* \in \supp(\bar \mu)$,
  yielding that the state constraint is active in
  $t^*$, i.e. $\Gw(\bar \uu(t^*)) = \beta$. 
  %Let us assume now, that there exists $t^*$ with $[\tilde \mu](t^*)>0$.
  As $\ww \in \V$, due to \eqref{eq:weak_state}, it holds
  \begin{equation*}
    \Oprod{\bar \q, \ww} \revision{-} \Oprod{\nabla \bar \uu,\nabla \ww} = \Oprod{\partial_t \bar \uu,\ww} 
    = \partial_t \Gw(\bar \uu) \quad \text{a.e. in } I.
  \end{equation*}
  Since $\bar \q \in C(\bar I;\revD{L^2(\Om)^d}) + \BV(I;\revD{H^1(\Om)^d})$ and
  $\nabla \bar \uu \in C(\bar I; \V)$,
  this identity shows that
  $\partial_t \Gw(\bar \uu) \in L^2(I)$ has representant with well defined onesided limits, which we shall 
  denote by the same symbol, and it holds 
  \begin{equation}\label{eq:jump_opt_control_bound_new}
    [\partial_t \Gw(\bar \uu)](t^*) 
    = \Oprod{[\bar \q](t^*),\ww} 
    \ge \frac{\alpha}{[\tilde \mu](t^*)}\|[\bar \q](t^*)\|_{L^2(\Om)}^2 
    % = \frac{\alpha}{[\tilde \mu](t^*)}\|[\bar \q](t^*)\|_{L^2(\Om)}^2 
    \ge 0,
  \end{equation}
  where the last \revD{inequality} holds due to \eqref{eq:positive_jump} and $[\tilde \mu](t^*) > 0$.
  Using $\Gw(\uu)(t) = \int_0^t \partial_t \Gw(\uu)(s) \ \d s$, it is straightforward to check that 
  \begin{equation*}
    \lim_{h \to 0+} \dfrac{\Gw(\bar \uu)(t\pm h)-\Gw(\uu)(t)}{h} 
    = \lim_{h\to 0+} \frac{1}{h} \int_t^{t\pm h} \partial_t \Gw(\bar \uu)(s) \ \d s 
    = \pm \partial_t \Gw(\bar \uu)(t\pm).
  \end{equation*}
  i.e. the onesided limits of $\partial_t \Gw(\bar \uu)$ correspond to the directional derivatives of 
  $\Gw(\bar \uu)$.
  As $\Gw(\bar \uu)(t^*) = \beta$, $t^*$ is a local maximum of $\Gw(\bar \uu)$, yielding
  \begin{equation*}
    \pm \partial_t \Gw(\bar \uu)(t^* \pm) \le 0 \Rightarrow [\partial_t \Gw(\bar \uu)](t^*) \le 0.
  \end{equation*}
  Combining this with \eqref{eq:jump_opt_control_bound_new} yields $\|[\bar \q](t^*)\|_{L^2(\Om)}^2 = 0$, 
  which shows $\bar \q \in C(\revD{\bar I};L^2(\Om)^d)$. 
  If additionally $\ww \in \Vt$, with the same arguments we can show $\bar \q \in C(\revD{\bar I};H^1(\Om)^d)$.
  %
%   which in turn shows $[\tilde \mu](t^*) = 0$, contradicting $[\tilde \mu](t^*) > 0$. Hence $\tilde \mu$ has 
%   no jumps and is thus continuous.
%   Thus using the continuity properties of $\hat \zz$ derived from before and the newly derived continuity of 
%   $\tilde \mu$ yields $\bar \zz \in C(\bar I;\Vz)$. If additionally $\ww \in \Vt$, then it holds
%   $\bar \zz \in C(\bar I;\V)$.
%   This shows the proposed regularities of $\bar \zz$, the ones for $\bar \q$ then follow immediately.
%   The claimed regularity of $\bar \mu$ directly follows from \eqref{eq:lagrange_multiplier_singleton},
%   and $[\tilde \mu] = 0$.
  \end{proof}
    \begin{remark}
      In the proof of the above theorem, we can under some circumstances also show continuity in time of 
      $\bar \zz$. In fact, the only obstacle preventing this result, is the application of the projection
      operator onto the control constraints $\Pad(\cdot)$. It is in general possible, that at some 
      point in time $\hat t$, $\bar \zz$ is discontinuous. In such cases, the spacial support of the jump
      is contained in the set of points $x \in \Omega$, where $-\frac{1}{\alpha} \bar \zz(\hat t \pm)$ 
      lie outside the set of admissible controls $[\q_a,\q_b]$.
      As the jump of $\bar \zz$ is a scalar multiple of $\ww$, this gives a compatibility condition
      on $\supp(\ww)$ and the set where the control constraints are active.
      As the active set of the control constraints cannot be known a priori, such a condition on $\bar \q$
      is not straightforward to verify. Instead we present in the next two corollaries two 
      sets of assumptions on the data, where we can obtain improved regularity of $\bar \zz$ and $\bar \mu$.
    \end{remark}
%   \begin{remark}
%     If the optimal control $\bar \q$ satisfies, that  for all $t \in I$,
%     the control constraints are inactive a.e. in $\Omega$, the proof of the above
%     theorem shows that $\bar \zz \in C(I;\Vz)$ if $\ww \in \V$ and $\bar \zz \in C(I;\V)$ if $\ww \in \Vt$.
%     Especially, this shows $[\tilde \mu]=0$ for all $t \in I$, which by 
%     \eqref{thm:optimal_regularity_higher_smoothness}
%     implies, that $\bar \mu$ does not contain any Dirac contributions.
%     As this criterion poses a condition on $\bar \q$, it is not straightforward to verify a priori.
%     \textcolor{red}{%
%     An exception is the case $\q_{a,i} = - \infty, \q_{b,i} = \infty$ for $i=1,...,d$.
%     \todo{put into corollary?}
%   }
%     Thus in the next Corollary, we present some sufficient conditions on the data, where we can obtain the
%     same regularity of the Lagrange multiplier $\bar \mu$.
%   \end{remark}
    \begin{corollary}
    Let $(\bar \q,\bar \uu,\bar \zz,\bar \mu)$ satisfy the first order necessary optimality conditions
    \eqref{eq: optimal state}-\eqref{eq: Variational inequality}.
    Let further $\q_{a,i} = -\infty$, $\q_{b,i} = + \infty$ for $i=1,...,d$, and $\ww \in \V$.
    Then $\bar \zz \in C(\revD{\bar I};\Vz)$ and $\bar \mu(\{t\}) = 0$ for all 
    $t \in I$, i.e. $\bar \mu$ does not contain
    any Dirac contributions.
    If additionally $\ww \in \Vt$, then $\bar \zz \in C(\revD{\bar I};\V)$.
    \end{corollary}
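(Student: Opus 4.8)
The plan is to exploit that, with $\q_{a,i}=-\infty$ and $\q_{b,i}=+\infty$ for all $i$, the admissible set $\Q_{ad}$ equals all of $L^2(I;L^2(\Om)^d)$ and the projection $\Pad$ reduces to the identity. Consequently the variational inequality \eqref{eq: Variational inequality} collapses to the pointwise identity $\bar{\q}=-\frac{1}{\alpha}\bar{\zz}$, so that $\bar{\q}$ and $\bar{\zz}$ differ only by the constant factor $-\alpha$ and share exactly the same temporal regularity. We may assume $\ww\neq\oo$, since for $\ww=\oo$ the state constraint reads $\Gw(\bar{\uu})=0<\beta$ and is never active, whence $\bar{\mu}=0$ and all assertions are trivial.

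First I would invoke \Cref{thm:optimal_regularity_higher_smoothness} with $s=2$, which is admissible because the standing assumption $\uu_d\in L^2(I;L^2(\Om)^d)$ and the hypothesis $\ww\in\V$ are in force. This yields $\bar{\q}\in C(\bar I;L^2(\Om)^d)$. Since $\bar{\zz}=-\alpha\bar{\q}$ takes values in $\Vz$ and the $\Vz$-norm is the $L^2(\Om)^d$-norm restricted to this closed subspace, continuity of $\bar{\q}$ in $L^2(\Om)^d$ is precisely continuity of $\bar{\zz}$ in $\Vz$, giving $\bar{\zz}\in C(\bar I;\Vz)$. In the case $\ww\in\Vt$, the same theorem gives $\bar{\q}\in C(\bar I;H^1(\Om)^d)$, and since $\bar{\zz}$ is then $\V$-valued with $\|\cdot\|_{\V}$ equivalent to the $H^1$-norm on $\V$ by Poincar\'e, this upgrades to $\bar{\zz}\in C(\bar I;\V)$.

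It remains to rule out Dirac contributions in $\bar{\mu}$. Here I would reuse the decomposition from the proof of \Cref{thm:optimal_regularity_higher_smoothness}, namely $\hat{\zz}=\bar{\zz}+\tilde{\mu}\ww$ with the normalized function $\tilde{\mu}\in\NBV(\bar I)$ and $\hat{\zz}\in C(\bar I;\Vz)$ (respectively $\hat{\zz}\in C(\bar I;\V)$ when $\ww\in\Vt$). Having just established $\bar{\zz}\in C(\bar I;\Vz)$, the difference $\tilde{\mu}\ww=\hat{\zz}-\bar{\zz}$ is continuous in $\Vz$; as $\ww\neq\oo$ is a fixed element, this forces the scalar $\tilde{\mu}$ to be continuous on $\bar I$. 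Invoking the identity $\bar{\mu}(\{t\})=[\tilde{\mu}](t)$ recorded in \eqref{eq:lagrange_multiplier_singleton}, continuity of $\tilde{\mu}$ gives $[\tilde{\mu}](t)=0$ and hence $\bar{\mu}(\{t\})=0$ for every $t\in I$, i.e. $\bar{\mu}$ carries no point masses.

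The genuine engine of the argument is the continuity of $\bar{\q}$ furnished by \Cref{thm:optimal_regularity_higher_smoothness}, whose proof runs the local-maximum argument at active constraint points; the only new observation, and the main thing to get right, is that the vanishing control constraints eliminate the projection $\Pad$ and thereby let the continuity of $\bar{\q}$ pass verbatim to $\bar{\zz}$ and, through the fixed weight $\ww$, to the scalar multiplier $\tilde{\mu}$. One further subtlety worth tracking is the endpoint $t=T$: the normalization $\tilde{\mu}(T)=0$ together with the left-continuity of $\tilde{\mu}$ at $T$ yields $\bar{\mu}(\{T\})=0$, consistent with the terminal condition $\bar{\zz}(T)=\bar{\mu}(\{T\})\ww$ in \eqref{eq: adjoint state}.
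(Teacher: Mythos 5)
Your proposal is correct and takes essentially the same route as the paper, whose proof is a one-line appeal to \Cref{thm:optimal_regularity_higher_smoothness} via the identity $\bar \q = -\frac{1}{\alpha}\bar \zz$ that holds when the control constraints are vacuous. You simply spell out the details the paper leaves implicit — in particular the step that continuity of $\tilde \mu \ww = \hat \zz - \bar \zz$ with $\ww \neq \oo$ forces $\tilde\mu$ to be continuous and hence, by \eqref{eq:lagrange_multiplier_singleton}, $\bar\mu(\{t\})=0$ — together with the harmless edge case $\ww=\oo$.
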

    \begin{proof}
      This is a direct consequence of the results of \Cref{thm:optimal_regularity_higher_smoothness}, as 
      in this case, the identity $\bar \q = - \frac{1}{\alpha} \bar \zz$ holds.
    \end{proof}
  \begin{corollary}
    Let $(\bar \q,\bar \uu,\bar \zz,\bar \mu)$ satisfy the first order necessary optimality conditions
    \eqref{eq: optimal state}-\eqref{eq: Variational inequality}.
    Let further $\q_a < \bf{0} < \q_b$, $\uu_d \in L^s(I;L^2(\Om)^d)$ for some $s > 2$, and $\ww \in \Vt$
    satisfy
    \begin{equation*}
      \exists x^* \in \partial \Om \text{ s.th. } \forall \varepsilon >0 
      \text{ it holds }  \supp(\ww) \cap B_{\varepsilon}(x^*) \neq \emptyset.
    \end{equation*}
    Then $\bar \zz \in C(\revD{\bar I};\V)$ and $\bar \mu(\{t\}) = 0$ for all $t \in I$, i.e. $\bar \mu$ does not contain
    any Dirac contributions.
  \end{corollary}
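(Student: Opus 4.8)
The plan is to argue by contradiction: I will suppose that $\bar\mu$ carries a Dirac mass at some $t^*\in I$ and exploit the tension between the spatial support condition on $\ww$ and the homogeneous boundary values of $\bar\zz$ to rule this out. First I would invoke \Cref{thm:optimal_regularity_higher_smoothness}: since $\ww\in\Vt$ and $\uu_d\in L^s(I;L^2(\Om)^d)$ with $s>2$, it gives $\hat\zz=\bar\zz+\tilde\mu\ww\in C(\bar I;\V)$, $\bar\zz\in L^s(I;\Vt)\cap\BV(I;\Vz)$, continuity $\bar\q\in C(\bar I;H^1(\Om)^d)$, and the representation $\bar\q=\Pad(\zz_\alpha)$ with $\zz_\alpha=-\tfrac1\alpha\bar\zz$ as in \eqref{eq:control_representation}. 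Consequently the temporal one-sided limits $\bar\zz(t^*\pm)$ exist, and because $\hat\zz$ is continuous in time they satisfy $\bar\zz(t^*+)-\bar\zz(t^*-)=-[\tilde\mu](t^*)\ww$, where by \eqref{eq:lagrange_multiplier_singleton} the jump $[\tilde\mu](t^*)$ equals $\bar\mu(\{t^*\})\ge 0$.

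Before running the contradiction I would establish the spatial regularity of these one-sided limits. Since $s>2$, the trace space of the maximal parabolic regularity space $W^{1,s}(I;\Vz)\cap L^s(I;\Vt)$ embeds into a space of $H^2$-type in the spatial variable, and $\Vt\subset H^2(\Om)^d\hookrightarrow C(\bar\Om)^d$ for $d\le 3$; combined with $\ww\in\Vt$ this yields that the limits $\zz_\alpha(t^*\pm)$ are continuous up to $\bar\Om$ and vanish on $\partial\Om$. I expect this to be the main obstacle of the whole proof, and it is precisely here that the strict inequality $s>2$ (rather than $s\ge 2$) is needed, so that the maximal-parabolic trace space crosses the critical Sobolev embedding threshold.

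Now assume $\bar\mu(\{t^*\})=[\tilde\mu](t^*)>0$. Then $[\zz_\alpha](t^*)=\tfrac1\alpha[\tilde\mu](t^*)\ww$ is a nonzero multiple of $\ww$, while $[\bar\q](t^*)=\oo$ by continuity of $\bar\q$. Evaluating the componentwise, pointwise identity $\Pad(\zz_\alpha(t^*+))=\Pad(\zz_\alpha(t^*-))$ at any $x$ with $\ww_i(x)\neq 0$ forces, since the $i$-th component of $\zz_\alpha$ genuinely jumps there, that both values $\zz_{\alpha,i}(t^*\pm)(x)$ lie on the same side outside $[\q_{a,i},\q_{b,i}]$; as $\q_a<\oo<\q_b$, this gives the uniform bound $|\zz_{\alpha,i}(t^*\pm)(x)|\ge\delta$ with $\delta:=\min_i\min(\q_{b,i},-\q_{a,i})>0$ at every such $x$. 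The support hypothesis supplies a sequence $y_n\to x^*\in\partial\Om$ with $\ww(y_n)\neq\oo$; passing to a subsequence so that a fixed component $\ww_i(y_n)$ keeps constant nonzero sign, I obtain $|\zz_{\alpha,i}(t^*\pm)(y_n)|\ge\delta$ for all $n$. This contradicts the continuity up to $\bar\Om$ and the vanishing on $\partial\Om$ from the previous paragraph, since $\zz_{\alpha,i}(t^*\pm)(y_n)\to\zz_{\alpha,i}(t^*\pm)(x^*)=0$.

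Hence $\bar\mu(\{t\})=[\tilde\mu](t)=0$ for every $t\in I$, i.e. $\bar\mu$ carries no Dirac contributions. Since a $\BV$ function with no jumps is continuous, $\tilde\mu\in C(\bar I)$, and therefore $\bar\zz=\hat\zz-\tilde\mu\ww\in C(\bar I;\V)$, using $\hat\zz\in C(\bar I;\V)$ and $\ww\in\Vt\subset\V$. Apart from the regularity step flagged above, the argument is just a pointwise case analysis of the box projection combined with the support condition.
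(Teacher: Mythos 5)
Your proof is correct and follows essentially the same route as the paper: both use the embedding $W^{1,s}(I;\Vz)\cap L^s(I;\Vt)\hookrightarrow C(\bar I;C(\bar\Om))$ for $s>2$ to get continuity of the one-sided limits up to $\bar\Om$, then combine the vanishing of $\bar\zz(t^*\pm)$ at $x^*\in\partial\Om$ with $\q_a<\oo<\q_b$, the support condition on $\ww$, and the continuity in time of $\bar\q$ to force $[\tilde\mu](t^*)=0$. The only cosmetic difference is that the paper localizes to a ball around $x^*$ where the projection acts as the identity and integrates $|[\bar\q]|^2$ there, while you derive a uniform lower bound $\lvert \zz_{\alpha,i}(t^*\pm)\rvert\ge\delta$ at support points and contradict it by letting those points approach $x^*$.
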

  \begin{proof}
    As $\uu_d \in L^s(I;L^2(\Om)^d)$ for some $s>2$, as in the beginning of the proof of
    \Cref{thm:optimal_regularity_higher_smoothness}, we obtain
    \begin{equation*}
      \hat \zz \in W^{1,s}(I;\Vz) \cap L^s(I;\Vt) \hookrightarrow C(\revD{\bar I};C(\bar \Om)),
    \end{equation*}
    where the last embedding holds due to \cite[Corollary 8]{simon_compact_1986}.
    As further $\ww \in \Vt \hookrightarrow C(\bar \Om)$,
    for any $t^* \in I$ it thus holds $\bar \zz(t^* +),\bar \zz(t^*-) \in C(\bar \Omega)$.
    As $\bar \zz(t^* +)(x^*) = \bar \zz(t^*-)(x^*) = \oo$, and $\q_a < \oo < \q_b$, there exists 
    $\delta > 0$, such that
    \begin{equation*}
      - \frac{1}{\alpha} \bar \zz(t^*+)(x), - \frac{1}{\alpha} \bar \zz(t^*-)(x) \in (\q_a,\q_b)
      \quad \text{for all } x \in B_\delta(x^*)\cap \bar \Omega.
    \end{equation*}
    Thus $[\bar \q](t^*)|_{B_\delta(x^*)} = [- \frac{1}{\alpha} \bar \zz](t^*)|_{B_\delta(x^*)}$.
    As $\supp(\ww) \cap B_\delta(x^*) \neq \emptyset$, there exists an open subset 
    $\omega \subset B_\delta(x^*)$, such that $\ww(x) \neq 0$ for all $x \in \omega$.
    In the end, we obtain, using the continuity in time of $\bar \q$ shown in 
    \Cref{thm:optimal_regularity_higher_smoothness}:
    \begin{equation*}
      0 = \|[\bar \q](t^*)\|_{L^2(\Om)}^2 \ge \int_{\omega} [\bar \q](t^*)^2 \d x = 
      \int_\omega \left(\left[ - \frac{1}{\alpha} \bar \zz\right](t^*)\right)^2 \d x
      = \int_\omega \frac{[\tilde \mu](t^*)^2}{\alpha^2} |\ww|^2 \d x \ge 0.
    \end{equation*}
    As $|\ww| > 0$ for all $x \in \omega$, this shows $[\tilde \mu](t^*) = 0$.
    As $t^*$ was arbitrary, this concludes the proof.
  \end{proof}

%%%%%%%%%%%%%%%%%%%%%%%%%%%%%%%%%%%%%%%%%%%%%%%%%%%%%%%%%%%%%%%%%%%%%%%%%%%%%%%%%%%%%%%%%%%%%
\section{Finite Element Approximation of the State Equation}\label{sec:discretization}
  
\subsection{Spatial discretization}
Let $\{\Th\}$\label{glos:triangulation} be a 
family of triangulations of $\bar \Omega$, consisting of closed simplices, where we denote by 
$h$\label{glos:h} the maximum mesh-size. Let $\X_h \subset H^1_0(\Omega)^d$\label{glos:fe_space_velocity} and
$M_h \subset L^2_0(\Omega)$\label{glos:fe_space_pressure} be a pair of compatible finite element spaces, i.e.,
\revC{there holds a} uniform discrete inf-sup condition,
\begin{equation}\label{chap02:eq:discrete_infsup}
    \sup_{\vv_h \in \X_h}\frac{\Oprod{q_h,\nabla \cdot \vv_h} }{\twonorm{\nabla \vv_h}} \geq \gamma \|{q_h}\|_{L^2(\Omega)} \quad \forall q_h \in M_h, 
\end{equation}%
with a constant $\gamma >0$ independent of $h$. 
We shall work under the assumption that the discrete spaces have the following approximative properties.
\begin{assumption}\label{ass:interpolation_operators}
  There exist interpolation operators $i_h\colon H^2(\Omega)^d \cap H^1_0(\Omega)^d \to \X_h$
  and $r_h\colon L^2(\Omega) \to M_h$, such that
  \begin{align*}
    \|\nabla(\vv - i_h \vv)\|_{L^2(\Omega)} &\le c h \|\nabla^2 \vv\|_{L^2(\Omega)}
                                        && \hspace{-30mm} \forall \vv \in H^2(\Omega)^2 \cap H^1_0(\Omega)^2,\\
    \|q - r_h q\|_{L^2(\Omega)} &\le c h \|\nabla q\|_{L^2(\Omega)} 
                                && \hspace{-30mm} \forall q \in H^1(\Omega).
  \end{align*}
\end{assumption}
This assumption is valid, for example, for Taylor-Hood and MINI finite elements on shape regular meshes, see \cite[Assumption 7.2]{Behringer_Leykekhman_Vexler_2023}.
% We introduce the vectorial discrete Laplace operator $-\Delta_h \colon \X_h \rightarrow \X_h$ by
% \begin{equation}
%     (-\Delta_h \vv_h, \chi) = (\nabla \vv_h, \nabla \chi), \quad \forall \vv_h,\chi \in \X_h.
% \end{equation}
We define the space of discretely divergence-free vector fields $\Vh$ as
\begin{equation}\label{eq:discretely_divergence_Xh}
    \Vh = \Set{ \vv_h \in \X_h \colon (\nabla \cdot \vv_h, q_h) =0 \quad \forall q_h \in M_h}.
\end{equation}
  While on a computational level, especially for the examples presented in \Cref{sec:num_results}, we work
  with a discrete velocity-pressure formulation, in our theoretical analysis we will always use the 
  equivalent formulation in discretely divergence free spaces, in order to shorten notation.
  One exception is the following stationary Stokes problem 
  of finding for some given $f \in H^{-1}(\Om)^d$
  a solution $(\uu,p) \in H^1_0(\Om)^d \times L^2_0(\Om)$ to
\begin{equation}\label{eq:stationary_stokes}
    \Oprod{\nabla \uu,\nabla \vv} - \Oprod{\nabla \cdot \vv,p} 
    + \Oprod{\nabla \cdot \uu,q} = \Oprod{\ff,\vv} \quad \forall (\vv,q) \in H^1_0(\Om)^d \times L^2_0(\Om).
\end{equation}
Its discrete approximation in velocity-pressure formulation reads:
Find $(\uu_{h},p_{h}) \in \X_h \times M_h$ satisfying
\begin{equation}\label{eq:stationary_stokes_discrete}
    \Oprod{\nabla \uu_{h},\nabla \vv_{h}} - \Oprod{\nabla \cdot \vv_{h},p_{h}} 
    + \Oprod{\nabla \cdot \uu_{h},q_{h}} = \Oprod{\ff,\vv_{h}} \quad \forall (\vv_{h},q_{h}) \in \X_h \times M_h.
\end{equation}
The above discrete system can be interpreted as a Stokes Ritz projection: given 
$(\uu,p) \in H^1_0(\Om)^d \times L^2_0(\Om)$, find 
$(R_h^S(\uu,p),R_h^{S,p}(\uu,p)):=(\uu_{h},p_{h}) \in \X_h \times M_h$, satisfying
\begin{equation}\label{eq:stationary_Stokes_projection}
    \Oprod{\nabla (\uu-\uu_{h}),\nabla \vv_{h}} - \Oprod{\nabla \cdot \vv_{h},(p-p_{h})} 
    + \Oprod{\nabla \cdot (\uu-\uu_{h}),q_{h}} = 0 \quad \forall (\vv_{h},q_{h}) \in \X_h \times M_h.
\end{equation}
  Note that, if $\uu \in \V$, then it holds $R_h^S(\uu,p) \in \Vh$.
%    \todo[color=yellow]{@Boris: wir brauchen diese Resultate nicht,
%    weil wir für den diskreten Slater Punkt plain convergence nutzen. Trotzdem im paper lassen?}
  Further, the Stokes Ritz projection satisfies the following stability, 
  see \cite[Theorem 5.2.1]{boffi_mixed_2013}.
  \begin{equation*}
    \|\nabla R_h^S(\uu,p)\|_{L^2(\Om)} + \|R_h^{S,p}(\uu,p)\|_{L^2(\Om)} 
    \le C (\|\nabla \uu\|_{L^2(\Om)} + \|p\|_{L^2(\Om)}).
  \end{equation*}
  Let us recall the following error estimates for the stationary discrete Stokes problem:
  See \cite[Theorem 5.25]{boffi_mixed_2013},
  \cite[Theorems 4.21, 4.25, 4.28]{john_finite_2016}, 
  \cite[Theorems 53.17 \& 53.19]{ern_finite_2021} or
  \cite[Chapter II, Theorems 1.8 \& 1.9]{girault_finite_1986}.
  \begin{theorem}\label{thm:stokes_ritz}
    Let $(\uu,p)$ and $(\uu_h,p_h)$ denote the solutions to the continuous and discrete 
    stationary Stokes problems \eqref{eq:stationary_stokes} and \eqref{eq:stationary_stokes_discrete} 
    respectively. Then there holds the estimate
    \begin{equation*}
      \|\uu - \uu_h\|_{H^1(\Om)} + \|p - p_h\|_{L^2(\Om)}
      \le C h (\|\uu\|_{H^2(\Om)} + \|p\|_{H^1(\Om)}).
    \end{equation*}
    If $\Omega$ is convex, there further holds the estimate
    \begin{equation*}
      \|\uu - \uu_h\|_{L^2(\Om)} 
      %+ \|p - p_h\|_{H^{-1}(\Om)}
      \le C h (\|\uu -\uu_h\|_{H^1(\Om)} + \|p-p_h\|_{L^2(\Om)}).
    \end{equation*}
  \end{theorem}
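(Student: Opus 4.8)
The plan is to prove the two estimates separately: the first via the classical quasi-optimality theory for inf-sup stable discretizations of saddle point problems, and the second via an Aubin--Nitsche duality argument exploiting the convex-domain $H^2$ regularity.

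For the first estimate I would rely on the fact that the pair $(\X_h,M_h)$ satisfies the uniform discrete inf-sup condition \eqref{chap02:eq:discrete_infsup}. Together with the continuity and coercivity (on the discrete kernel) of the bilinear forms, Brezzi's theory yields the quasi-optimality bound
\[
  \|\uu - \uu_h\|_{H^1(\Om)} + \|p - p_h\|_{L^2(\Om)}
  \le C \inf_{(\vv_h,q_h)\in \X_h\times M_h}
  \left( \|\uu - \vv_h\|_{H^1(\Om)} + \|p - q_h\|_{L^2(\Om)} \right).
\]
Choosing $\vv_h = i_h \uu$ and $q_h = r_h p$ and invoking the interpolation bounds of \Cref{ass:interpolation_operators} immediately gives the right-hand side $\le Ch\left(\|\uu\|_{H^2(\Om)} + \|p\|_{H^1(\Om)}\right)$, which is the claim.

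For the second estimate I would introduce the dual stationary Stokes problem with right-hand side $\uu - \uu_h$: find $(\boldsymbol{\varphi},\xi)\in H^1_0(\Om)^d\times L^2_0(\Om)$ with
\[
  \Oprod{\na \vv,\na \boldsymbol{\varphi}} - \Oprod{\na\cdot \vv,\xi}
  + \Oprod{\na\cdot \boldsymbol{\varphi},\eta} = \Oprod{\uu-\uu_h,\vv}
  \quad \forall\, (\vv,\eta).
\]
Because $\Om$ is convex, the $H^2$ regularity of the stationary Stokes problem (the same result used in \Cref{thm:solvability_state_equation}) gives $\|\boldsymbol{\varphi}\|_{H^2(\Om)} + \|\xi\|_{H^1(\Om)} \le C\|\uu-\uu_h\|_{L^2(\Om)}$, and in particular $\na\cdot\boldsymbol{\varphi}=0$. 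Testing this dual problem with $\vv=\uu-\uu_h$ (the $\eta$-term drops since $\na\cdot\boldsymbol{\varphi}=0$), inserting the interpolants $\boldsymbol{\varphi}_h = i_h\boldsymbol{\varphi}$ and $\xi_h = r_h\xi$, and using the two Galerkin orthogonality relations
\[
  \Oprod{\na(\uu-\uu_h),\na \vv_h} - \Oprod{p-p_h,\na\cdot \vv_h}=0,
  \qquad
  \Oprod{q_h,\na\cdot(\uu-\uu_h)}=0,
\]
valid for all $\vv_h\in \X_h$, $q_h\in M_h$, I would rewrite the squared error purely in terms of dual interpolation errors paired against the primal errors,
\[
  \|\uu-\uu_h\|_{L^2(\Om)}^2
  = \Oprod{\na(\uu-\uu_h),\na(\boldsymbol{\varphi}-\boldsymbol{\varphi}_h)}
  - \Oprod{p-p_h,\na\cdot(\boldsymbol{\varphi}-\boldsymbol{\varphi}_h)}
  - \Oprod{\na\cdot(\uu-\uu_h),\xi-\xi_h}.
\]
Applying Cauchy--Schwarz, the bound $\|\na\cdot\vv\|_{L^2(\Om)}\le C\|\na \vv\|_{L^2(\Om)}$, the interpolation estimates of \Cref{ass:interpolation_operators} for $(\boldsymbol{\varphi},\xi)$ (each contributing the factor $h$), and finally the $H^2$-regularity bound yields $\|\uu-\uu_h\|_{L^2(\Om)}^2 \le Ch\,\|\uu-\uu_h\|_{L^2(\Om)}\left(\|\uu-\uu_h\|_{H^1(\Om)}+\|p-p_h\|_{L^2(\Om)}\right)$, and division by $\|\uu-\uu_h\|_{L^2(\Om)}$ gives the assertion.

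The main obstacle is the bookkeeping in the duality step. One must use $\na\cdot\boldsymbol{\varphi}=0$ to convert the term $\Oprod{p-p_h,\na\cdot\boldsymbol{\varphi}_h}$ produced by the momentum orthogonality into the interpolation-error term $-\Oprod{p-p_h,\na\cdot(\boldsymbol{\varphi}-\boldsymbol{\varphi}_h)}$, while simultaneously using the continuity orthogonality to replace $\xi$ by $\xi-\xi_h$. Getting these saddle-point coupling terms and their signs correct is the only delicate point; once the identity above is established, everything reduces to the approximation properties of \Cref{ass:interpolation_operators} and the convex-domain $H^2$ regularity.
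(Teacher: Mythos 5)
The paper does not prove this theorem itself but recalls it from the standard literature (Boffi--Brezzi--Fortin, Girault--Raviart, etc.), and your argument --- Brezzi quasi-optimality from the uniform discrete inf-sup condition \eqref{chap02:eq:discrete_infsup} combined with the interpolation bounds of \Cref{ass:interpolation_operators} for the energy estimate, followed by an Aubin--Nitsche duality argument with the (symmetric, hence self-adjoint) dual Stokes problem and convex-domain $H^2\times H^1$ regularity for the $L^2$ estimate --- is exactly the proof given in those references, and your bookkeeping of the orthogonality and coupling terms in the duality identity is correct. The only point worth making explicit is that \Cref{ass:interpolation_operators} controls only the gradient seminorm of $\vv - i_h\vv$, so the full $H^1(\Om)$ norm on the right of the quasi-optimality bound requires the Poincar\'e inequality (available since $\vv - i_h\vv \in H^1_0(\Om)^d$); this is immediate but should be said.
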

  Note, that while \cite[Theorem 5.5.6]{boffi_mixed_2013} contains formal results, on how to derive
    error estimates also for $p$ in a weaker norm, e.g. $H^{-1}(\Omega)$, the argument requires 
    $H^2$ regularity results for the compressible Stokes equations.
    As the corresponding results in \cite{kellogg_regularity_1976,dauge_stationary_1989} require an additional
    decaying condition for the compressibility data, this makes derivation of weaker error estimates for the 
    pressure complicated.
%}

\subsection{Temporal discretization: the discontinuous Galerkin method}
\label{chap:IS:section:discontG}

In this section we introduce  the discontinuous Galerkin method for the time discretization of the transient Stokes equations, a similar method was considered, e.g., in \cite{Chrysafinos_Walkington_2010}.
For that, we partition $I = (0,T]$ into subintervals $I_m = (t_{m-1},t_m]$ of length ${k}_m = t_m - t_{m-1}$, where $0= t_0<t_1<\dots <t_{M-1}<t_M = T$. The maximal and minimal time steps are denoted by ${k}=\max_m {k}_m$ and ${k}_{\min}=\min_m{k}_m$, respectively.  The time partition fulfills the following assumptions:
\begin{enumerate}
  \item There are constants $C,\revC{\theta} >0$ independent of \revC{${k}$} such that
	\begin{equation}
          {k}_{\min} \geq C {k}^{\revC{\theta}}. 
	\end{equation}
      \item There is a constant $\kappa >0$ independent of \revC{${k}$} such that for all\\$m=1, 2, \dots, M-1$
	\begin{equation}
	    \kappa^{-1} \leq \frac{{k}_m}{{k}_{m+1}} \leq \kappa.
	\end{equation}
    \item It holds ${k} \leq \frac{T}{4}$.
\end{enumerate}
For a given Banach space $\mathcal{B}$, we define the semi-discrete space $X_{k}^0(\mathcal{B})$ of piecewise
constant functions in time as
\begin{equation}
    X_{k}^0(\mathcal{B}) = \Set{ \vv_{k} \in L^2(I; \mathcal{B}) \colon \vv_{k}\vert_{I_m} \equiv \vv_m
    \text{ for some } \vv_m \in \mathcal{B}, m = 1,2, \dots, M}.
\end{equation}
% where $\mathcal{P}_{r,I_m}(\mathcal{B})$ is the space of polynomial functions of degree less or equal $r$ in time with values in $\mathcal{B}$, i.e.,
% \begin{equation}\label{chap:IS:eq:polynomial}
%     \mathcal{P}_{r,I_m}(\mathcal{B}) = \Set{ \vv_{k} \colon I_m \rightarrow \mathcal{B} | \vv_{k}(t) = \sum_{j=0}^r \vv^j\phi_j(t) ,\;\vv^j \in \mathcal{B}, j =0, \dots, r}. 
% \end{equation}
% Here, $\{\phi_j(t)\}$ is a polynomial basis in $t$ of the space $\mathcal{P}_r(I_m)$ of polynomials with degree less or equal $r$ over the interval $I_m$.
We use the following standard notation for a function $\uu \in X_{k}^r(\mathcal{B})$
\revB{%
  to denote one-sided limits and jumps at the time nodes
}
\begin{equation}
\uu_m^+ = \lim_{\varepsilon \rightarrow 0^+}\uu(t_m+\varepsilon), \quad \uu_m^- = \lim_{\varepsilon \rightarrow 0^+}\uu(t_m-\varepsilon),\quad [\uu]_m = \uu_m^+-\uu_m^-.
\end{equation}
We define the bilinear form ${B}(\cdot,\cdot)$ by
\[
  {B}(\uu,\vv) = \revC{\sum_{m=1}^M (\partial_t\uu, \vv )_{I_m\times \Omega}} +
  (\nabla \uu, \nabla \vv)_{I\times \Omega}
	+ \sum_{m=2}^M ([\uu]_{m-1},\vv^+_{m-1})_{\Omega} + (\uu_0^+,\vv_0^+)_{\Omega}.
\]
With this bilinear form, we define the fully discrete approximation for the transient Stokes problem on the 
discretely divergence free space $X_{k}^0(\Vh)$:
\begin{equation}\label{eq:fully_discrete_div_free}
	\uu_{kh} \in X_{k}^0(\Vh) \;:\; {B}(\uu_{kh},\vv_{kh}) = (\q,\vv_{kh})_{I \times \Omega}
  %+ (\uu_0, \vv_{kh,0}^+)_{\Omega} 
  \quad \forall \vv_{kh} \in X_{k}^0(\Vh).
\end{equation}
The unique solution to this system is stable, as the following theorem summarizes:
\begin{theorem}\label{thm:stability_discrete_solution_stokes}
  Let $\q \in L^2(I;H^{-1}(\Om)^d)+L^1(I;L^2(\Om)^d)$. Then there exists a unique solution 
  $\uu_{kh} \in X^0_k(\Vh)$ of \eqref{eq:fully_discrete_div_free}, satsifying
  \begin{equation*}
    \|\uu_{kh}\|_{L^2(I;H^1(\Om))} + \|\uu_{kh}\|_{L^\infty(I;L^2(\Om))} 
    \le C \|\q\|_{L^2(I;H^{-1}(\Om))+L^1(I;L^2(\Om))},
  \end{equation*}
  \revision{where the constant $C>0$ is independent of $k,h$.}
\end{theorem}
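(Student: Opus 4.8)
The plan is to exploit that for piecewise-constant-in-time functions the volume term $\sum_{m=1}^M(\partial_t\uu,\vv)_{I_m\times\Om}$ in $B(\cdot,\cdot)$ vanishes. Writing $\uu_{kh}|_{I_m}\equiv U_m\in\Vh$ and $\vv_{kh}|_{I_m}\equiv V_m\in\Vh$, the bilinear form reduces to
\[
B(\uu_{kh},\vv_{kh}) = \sum_{m=1}^M k_m(\nabla U_m,\nabla V_m)_\Om + \sum_{m=2}^M(U_m-U_{m-1},V_m)_\Om + (U_1,V_1)_\Om .
\]
Since $X^0_k(\Vh)$ is finite-dimensional, existence and uniqueness follow once $B$ is shown to be positive definite. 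Testing with $\vv_{kh}=\uu_{kh}$ and applying the identity $(a-b,a)=\tfrac12\|a\|^2-\tfrac12\|b\|^2+\tfrac12\|a-b\|^2$ to each jump term, the telescoping sum yields
\[
B(\uu_{kh},\uu_{kh}) = \sum_{m=1}^M k_m\|\nabla U_m\|^2_{L^2(\Om)} + \tfrac12\|U_M\|^2_{L^2(\Om)} + \tfrac12\|U_1\|^2_{L^2(\Om)} + \tfrac12\sum_{m=2}^M\|U_m-U_{m-1}\|^2_{L^2(\Om)},
\]
which is nonnegative and vanishes only if all $\nabla U_m=\oo$, hence (using $U_m\in H^1_0(\Om)^d$ and Poincaré) only if $\uu_{kh}=\oo$. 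Thus the associated matrix is invertible.

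For the a priori bound I split $\q=\q_1+\q_2$ with $\q_1\in L^2(I;H^{-1}(\Om)^d)$ and $\q_2\in L^1(I;L^2(\Om)^d)$, and estimate the right-hand side through its natural dual pairings, $|(\q_1,\vv_{kh})_{I\times\Om}|\le\rho_1\|\vv_{kh}\|_{L^2(I;H^1_0)}$ and $|(\q_2,\vv_{kh})_{I\times\Om}|\le\rho_2\|\vv_{kh}\|_{L^\infty(I;L^2)}$, where $\rho_1:=\|\q_1\|_{L^2(I;H^{-1})}$ and $\rho_2:=\|\q_2\|_{L^1(I;L^2)}$. The energy identity above with $\vv_{kh}=\uu_{kh}$ controls $E:=\|\uu_{kh}\|_{L^2(I;H^1)}$ but not the $L^\infty(I;L^2)$ norm; so I additionally test with the truncation defined by $V_m=U_m$ for $m\le n$ and $V_m=\oo$ for $m>n$. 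Because the jump contribution at $t_n$ drops, the same telescoping computation gives $\tfrac12\|U_n\|^2_{L^2(\Om)}\le(\q,\vv_{kh})_{I\times\Om}$ for every $n$. Maximizing over $n$ and setting $\Lambda:=\|\uu_{kh}\|_{L^\infty(I;L^2)}=\max_m\|U_m\|_{L^2(\Om)}$ yields $\tfrac12\Lambda^2\le\rho_1 E+\rho_2\Lambda$, while the full energy estimate gives $cE^2\le\rho_1 E+\rho_2\Lambda$.

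It remains to close these coupled inequalities. Adding them and absorbing the cross terms by Young's inequality ($\rho_1 E\le\tfrac{c}{4}E^2+C\rho_1^2$ and $\rho_2\Lambda\le\tfrac14\Lambda^2+C\rho_2^2$) produces $E^2+\Lambda^2\le C(\rho_1^2+\rho_2^2)$, hence $E+\Lambda\le C(\rho_1+\rho_2)$ with $C$ independent of $k$ and $h$. Finally, since the decomposition $\q=\q_1+\q_2$ was arbitrary, taking the infimum over all such splittings replaces $\rho_1+\rho_2$ by $\|\q\|_{L^2(I;H^{-1})+L^1(I;L^2)}$, which is the asserted bound. I expect the main obstacle to be exactly this coupling: the $L^1(I;L^2)$ datum pairs against the $L^\infty(I;L^2)$ norm of the test function, so the maximum-in-time estimate is self-referential in $\Lambda$, and the absorption must be carried out carefully so that no factor depending on the number of time steps $M$ (and thus on $k$) enters the constant.
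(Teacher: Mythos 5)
Your argument is correct, and it is genuinely different from what the paper does: the paper gives no self-contained proof at all, but observes that the system is square and finite-dimensional (so uniqueness suffices) and then refers to a standard argument based on a discrete Gronwall lemma in the cited reference, with emphasis on the $L^1(I;L^2(\Om)^d)$ right-hand side. Your route replaces the Gronwall step entirely: the truncated test function $V_m=U_m$ for $m\le n$, $V_m=\oo$ for $m>n$ (equivalently, summing the per-step energy identities up to step $n$) gives the pointwise-in-time control $\tfrac12\|U_n\|^2\le \rho_1E+\rho_2\Lambda$, and the self-referential term $\rho_2\Lambda$ is then absorbed by Young's inequality against $\tfrac12\Lambda^2$. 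This works precisely because the problem is linear with no lower-order coupling back to $\uu_{kh}$, so nothing accumulates over the $M$ steps and the constant is manifestly independent of $M$, $k$ and $h$ --- which is the point you correctly flag as the main danger. Your explicit computation of $B(\uu_{kh},\uu_{kh})$ (vanishing of the $\partial_t$ term for piecewise constants, telescoping of the jump terms, positive definiteness via Poincar\'e) also makes the existence/uniqueness claim concrete rather than deferred. The only cosmetic remark is that for $\q_1\in L^2(I;H^{-1}(\Om)^d)$ the pairing $(\q_1,\vv_{kh})_{I\times\Om}$ should be read as a duality pairing, and the equivalence of $\|\nabla\cdot\|_{L^2(\Om)}$ with the full $H^1_0$ norm via Poincar\'e should be invoked once when passing between $\sum_m k_m\|\nabla U_m\|^2$ and $E^2$; you do account for this with the constant $c$. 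What your approach buys is a short, fully self-contained proof; what the paper's citation buys is uniformity with a reference that treats the general dG($r$) case, where the piecewise-constant simplifications you exploit are not available.
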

\begin{proof}
  As \eqref{eq:fully_discrete_div_free} poses a square system of linear equations in finite dimensions,
  it suffices to show uniqueness. This is a standard argument, making use of a discrete Gronwall Lemma,
  see e.g. \cite[Theorem 4.13]{vexler_error_2024} for a proof focusing especially on the 
  $L^1(I;L^2(\Om)^d)$ right hand side case.
\end{proof}
\begin{remark}\label{rem:stability_discrete_dual}
  Rearranging terms in the definition of the bilinear form gives the following dual representation of $B(\cdot,\cdot)$
  \begin{equation*}
    B(\uu,\vv) = 
  \revC{- \sum_{m=1}^M (\uu, \partial_t \vv )_{I_m\times \Omega}}
  + (\nabla \uu, \nabla \vv)_{I\times \Omega}
  - \sum_{m=1}^{M-1} (\uu_m^-,[\vv]_m)_{\Omega} + (\uu_M^-,\vv_M^-)_{\Omega}.
  \end{equation*}
  With the same arguments as above, for given $\mathbf{g} \in L^2(I;H^{-1}(\Om)^d) + L^1(I;L^2(\Om)^d)$, 
  solutions $\zz_{kh} \in X^0_k(\Vh)$ to the discrete dual equation
  \begin{equation*}
    B(\vv_{kh},\zz_{kh}) = \IOprod{\mathbf{g},\vv_{kh}} \quad \forall \vv_{kh} \in X_k^0(\Vh) 
  \end{equation*}
  exist, are unique and satisfy the stability
  \begin{equation*}
    \|\zz_{kh}\|_{L^2(I;H^1(\Om))} + \|\zz_{kh}\|_{L^\infty(I;L^2(\Om))} 
    \le C \|\mathbf{g}\|_{L^2(I;H^{-1}(\Om))+L^1(I;L^2(\Om))}.
  \end{equation*}
\end{remark}

\subsection{Best approximation type fully discrete error estimate  for the Stokes problem in $L^\infty(I;L^2(\Om)^d)$ norm}
In our recent paper \cite{Behringer_Leykekhman_Vexler_2023},
we have established a best approximation type error estimate for the Stokes problem in the 
$L^\infty(I;L^2(\Om)^d)$ norm. From this more general result, we obtain in the case of homogeneous initial 
data the following result, see \cite[Corollary 6.4]{Behringer_Leykekhman_Vexler_2023}.
\begin{theorem}\label{thm: fully discrete best approximation}
  \revB{Let $\q \in L^s(I;L^2(\Om)^d)$ for some $s>1$ and let $\uu\in \W$ be the weak solution to
    \eqref{eq:weak_state} with associated pressure $p$ in the sense of \eqref{eq:associated_pressure}.
%  \begin{equation}\label{eq: weak state 2}
% \begin{aligned}
%   \IOpair{\pa_t \uu,\vv}
%   +\IOprod{\na \uu,\na \vv}
%   &= \IOprod{\revB{\q},\vv}\quad \forall \vv\in \revB{L^2(I;\V) \cap L^\infty(I;\Vz)},\\
%   \uu(0)&=0,
% \end{aligned}
% \end{equation}
Let $\uu_{kh}\in X_{k}^0(\Vh)$ be the fully discrete Galerkin solution to \eqref{eq:fully_discrete_div_free}.
}
%  \begin{equation}\label{eq: fully discrete state 2}
%    {B}(\uu_{kh},\vv_{kh})=(\revB{\q},\vv_{kh})\quad \forall \vv_{kh}\in X_{k}^0(\Vh).
%   \end{equation}
  Then there exists a constant $C$ independent of $k$ and $h$, such that
  for any $\chi\in X_{k}^0(\Vh)$ there holds
\[
    \|\uu-\uu_{kh}\|_{L^\infty(I;L^2(\Om))} \le C\ell_k \left(\|\uu-\chi\|_{L^\infty(I;L^2(\Om))}+\|\uu-R^S_h(\uu,p)\|_{L^\infty(I;L^2(\Om))}\right),
\]
where $\ell_k=\lk$ and $R^S_h(\uu,p)$ is the 
stationary finite element 
Stokes projection introduced in \eqref{eq:stationary_Stokes_projection}.
\end{theorem}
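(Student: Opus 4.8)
The plan is to obtain this estimate as the special case of homogeneous initial data $\uu(0)=\oo$ of the general best approximation result \cite[Corollary 6.4]{Behringer_Leykekhman_Vexler_2023}; below I outline the duality argument underlying that estimate. First I would split the error using an intermediate projection. Writing $\uu - \uu_{kh} = (\uu - \pi_k R_h^S(\uu,p)) + (\pi_k R_h^S(\uu,p) - \uu_{kh})$, where $R_h^S$ is the stationary Stokes projection \eqref{eq:stationary_Stokes_projection} (so the intermediate function is discretely divergence free) and $\pi_k$ is the $L^2$-in-time projection onto piecewise constants, the projection contribution is already of best approximation type: by the triangle inequality and stability of $\pi_k$ it is controlled by $\|\uu-\chi\|_{L^\infty(I;L^2(\Om))}$ for arbitrary $\chi \in X_k^0(\Vh)$ together with $\|\uu - R_h^S(\uu,p)\|_{L^\infty(I;L^2(\Om))}$. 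Hence the entire difficulty reduces to bounding the purely discrete component $\xi_{kh} := \pi_k R_h^S(\uu,p) - \uu_{kh} \in X_k^0(\Vh)$ in the $L^\infty(I;L^2(\Om))$ norm.

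Second I would exploit Galerkin orthogonality. Since $\uu_{kh}$ solves \eqref{eq:fully_discrete_div_free}, $\uu$ is the weak solution, and testing against discretely divergence free functions annihilates the pressure gradient in \eqref{eq:associated_pressure}, one has $B(\uu-\uu_{kh},\vv_{kh})=0$ for all $\vv_{kh}\in X_k^0(\Vh)$. This identity lets me rewrite $B(\xi_{kh},\vv_{kh})$ entirely in terms of the temporal projection error $\uu-\pi_k\uu$ and the spatial Ritz error $\uu-R_h^S(\uu,p)$. To reach the $L^\infty$-in-time norm I would then run a duality argument: fixing a time level $t^*$ at which $\|\xi_{kh}\|_{L^2(\Om)}$ is essentially attained and using a mollified-in-time delta distribution concentrated near $t^*$ as data, I introduce the discrete backward dual solution $\zz_{kh}\in X_k^0(\Vh)$ from the dual formulation in \Cref{rem:stability_discrete_dual}. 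Pairing the error representation with $\zz_{kh}$ reduces the estimate to bounding suitable weighted norms of the dual solution, and it is here that the logarithmic factor $\ell_k = \lk$ enters through the discrete smoothing / maximal parabolic regularity estimates for the dG(0)-in-time scheme.

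The main obstacle, and the technical core of \cite{Behringer_Leykekhman_Vexler_2023}, is establishing these discrete resolvent and smoothing estimates for the transient Stokes operator uniformly in $k$ and $h$, with the pressure handled throughout via the discretely divergence free space and the stability of $R_h^S$ from \Cref{thm:stokes_ritz}. Once those estimates are available, combining them with the stability of the Stokes Ritz projection and the approximation properties of \Cref{ass:interpolation_operators} yields the two best approximation terms on the right-hand side and concludes the argument.
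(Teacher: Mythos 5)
Your proposal takes essentially the same route as the paper: the paper proves \Cref{thm: fully discrete best approximation} simply by specializing \cite[Corollary 6.4]{Behringer_Leykekhman_Vexler_2023} to homogeneous initial data, which is exactly your starting point. Your additional sketch of the internal mechanism of that corollary (splitting via $\pi_k R_h^S(\uu,p)$, Galerkin orthogonality on the discretely divergence free space, and a duality argument with a regularized point evaluation in time producing the factor $\ell_k$) is consistent with the cited work, but the paper itself offers no argument beyond the citation, so there is nothing further to compare.
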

\revC{Using the error estimates for the stationary Stokes Ritz projection of \Cref{thm:stokes_ritz}, 
  in \cite[Theorem 7.4]{Behringer_Leykekhman_Vexler_2023} the following estimate in terms of explicit orders
  of convergence was shown.
}

\begin{corollary}\label{cor: error estimate for convex}
If in addition to assumptions of Theorem \ref{thm: fully discrete best approximation}, the domain $\Omega$
is convex, and  $\revB{\q} \in L^\infty(I;L^2(\Omega)^d)$,
then  there exists a constant $C$ independent of $k$ and $h$ such that
$$
\|\uu-\uu_{kh}\|_{L^\infty(I;L^2(\Om))}\le C\ell_k^2(h^2+k)\|\revB{\q}\|_{L^\infty(I;L^2(\Omega))}.
$$
\end{corollary}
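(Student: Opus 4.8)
The plan is to feed the regularity of the continuous solution into the best approximation estimate of \Cref{thm: fully discrete best approximation}, exactly along the lines of \cite[Theorem 7.4]{Behringer_Leykekhman_Vexler_2023}. I would first choose $\chi \in X_k^0(\Vh)$ to be the piecewise-constant-in-time interpolant of the spatial Stokes Ritz projection $R_h^S(\uu,p)$; this is admissible since $R_h^S(\uu,p)(t) \in \Vh$ for every $t$. A triangle inequality then splits the temporal best-approximation term in \Cref{thm: fully discrete best approximation} into the fixed spatial Ritz error $\|\uu - R_h^S(\uu,p)\|_{L^\infty(I;L^2(\Om))}$ and a purely temporal interpolation error of $R_h^S(\uu,p)$, so that up to the prefactor $\ell_k$ the whole estimate is reduced to bounding these two contributions.

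For the spatial contribution I would invoke \Cref{thm:stokes_ritz} pointwise in time: on a convex domain its two estimates combine to $\|\uu(t) - R_h^S(\uu(t),p(t))\|_{L^2(\Om)} \le C h^2 (\|\uu(t)\|_{H^2(\Om)} + \|p(t)\|_{H^1(\Om)})$. For the temporal contribution, using that $\chi$ is piecewise constant together with the fundamental theorem of calculus and Hölder's inequality on each subinterval $I_m$ gives, for every $s > 1$, a bound of the form $C k^{1-1/s}\|\partial_t R_h^S(\uu,p)\|_{L^s(I;L^2(\Om))}$, where the time derivative of the Ritz projection is controlled by $\partial_t \uu$ via linearity and stability of $R_h^S$. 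It remains to estimate the arising space--time norms of $\uu$ and $p$ by $\|\q\|_{L^\infty(I;L^2(\Om))}$.

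The crux is the regularity step, and the endpoint $s=\infty$ is genuinely unavailable: for $\q \in L^\infty(I;L^2(\Om))$ the continuous solution does not belong to $L^\infty(I;H^2(\Om))$, since the Duhamel representation of $A\uu$ diverges logarithmically, so the naive pointwise $H^2$ bound would have an infinite right-hand side. The remedy is to stay at finite integrability: by \Cref{thm:regularity_sol_operator}, equivalently the maximal parabolic regularity \eqref{eq:max_par_reg_L2} combined with $H^2$ regularity on convex domains, one has
\[
\|\uu\|_{L^s(I;H^2(\Om))} + \|p\|_{L^s(I;H^1(\Om))} + \|\partial_t \uu\|_{L^s(I;L^2(\Om))} \le C_s \|\q\|_{L^s(I;L^2(\Om))} \le C_s\, T^{1/s}\|\q\|_{L^\infty(I;L^2(\Om))},
\]
where the maximal-regularity constant $C_s$ grows (at most) linearly in $s$. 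Inserting this into the temporal estimate yields the weight $k^{1-1/s}T^{1/s} = k\,(T/k)^{1/s}$, and a time-localized version of the same argument applied to the spatial Ritz term yields a comparable logarithmic weight; choosing $s = \ell_k = \ln(T/k)$ makes $(T/k)^{1/s} = e = O(1)$ while $C_s = O(\ell_k)$, so that each of the two contributions picks up exactly one extra factor $\ell_k$ together with the expected rates $h^2$ and $k$.

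Combining the two bounds with the prefactor $\ell_k$ from \Cref{thm: fully discrete best approximation} gives $\|\uu - \uu_{kh}\|_{L^\infty(I;L^2(\Om))} \le C\ell_k(\ell_k h^2 + \ell_k k)\|\q\|_{L^\infty(I;L^2(\Om))} = C\ell_k^2(h^2+k)\|\q\|_{L^\infty(I;L^2(\Om))}$, as claimed. I expect the spatial Ritz term to be the main obstacle: unlike the temporal term it cannot be reduced to a finite-$s$ Bochner norm by Hölder's inequality, so controlling its $L^\infty(I;L^2(\Om))$ norm with only a single logarithmic loss requires a genuine time-localized smoothing/maximal-regularity argument rather than a black-box application of \Cref{thm:regularity_sol_operator}.
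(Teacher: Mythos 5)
The paper offers no proof of this corollary at all: it is imported verbatim from \cite[Theorem 7.4]{Behringer_Leykekhman_Vexler_2023}, so the only fair comparison is with the strategy of that reference, and your reconstruction does follow it --- insert the regularity of $\uu$ into the best-approximation estimate of \Cref{thm: fully discrete best approximation}, and generate the second logarithmic factor by working in $L^s$ in time with $s=\ln(T/k)$, exploiting that the maximal-regularity constant is $O(s)$ while $(T/k)^{1/s}=e$. Your handling of the temporal term is correct; it is slightly cleaner to split $\|\uu-\pi_k R_h^S(\uu,p)\|\le\|\uu-\pi_k\uu\|+\|\pi_k(\uu-R_h^S(\uu,p))\|$, so that the fundamental-theorem-of-calculus argument acts on $\partial_t\uu$ directly and no $L^2(\Om)$-stability of $\partial_t R_h^S(\uu,p)$ (which does not follow from the stability results recorded in the paper) is needed.

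The genuine gap is the one you flag yourself: the term $\|\uu-R_h^S(\uu,p)\|_{L^\infty(I;L^2(\Om))}$. As you correctly observe, $\q\in L^\infty(I;L^2(\Om)^d)$ does not yield $\uu\in L^\infty(I;H^2(\Om)^d)$, so the pointwise-in-time application of \Cref{thm:stokes_ritz} has an unbounded right-hand side, and --- unlike for the temporal term --- H\"older in time cannot trade an $L^s(I;H^2(\Om)^d)$ bound for $L^\infty$-in-time control of a quantity that is not a time average. Announcing ``a time-localized version of the same argument'' is not a proof; this is precisely where the cited reference invests its technical work. The natural routes are either to interpolate the two error estimates of \Cref{thm:stokes_ritz} against fractional powers of the Stokes operator and combine this with the embedding $W^{1,s}(I;\Vz)\cap L^s(I;\Vt)\hookrightarrow C(\bar I;D(A^{1-1/s}))$, or to commute $R_h^S$ with the Duhamel representation of $\uu$ and split the resulting time integral near its singularity; in either case one must track carefully which logarithm ($\ln(T/k)$ versus $|\ln h|$) appears, since the corollary claims the factor $\ell_k^2$ with no coupling between $k$ and $h$. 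Until that step is supplied, your argument establishes the $k$-part of the rate but leaves the $h^2$-part open, so it is a correct reconstruction of the strategy rather than a complete proof.
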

The above results are valid in $L^2(I; L^2(\Om)^d)$ norm as well. However, using the energy and duality
arguments it is possible to show the corresponding results log-free and with 
less regularity assumptions on the data (cf. \cite[Theorem 11 \& Theorem 13]{Leykekhman_Vexler_2023}).

\begin{theorem}\label{thm: fully discrete best approximation L2}
\revB{Let $\q \in L^2(I;L^2(\Om)^d)$ and } let $\uu\in \W$ be the weak solution to \eqref{eq:weak_state} 
\revB{with associated pressure $p$ in the sense of \eqref{eq:associated_pressure}.}
Let $\uu_{kh}\in X_{k}^0(\Vh)$ be the fully discrete Galerkin solution to \eqref{eq:fully_discrete_div_free}.
Then there exists a constant $C$, independent of $k$ and $h$, such that for any
$\chi\in X_{k}^0(\Vh)$, there holds
\begin{subequations}
\begin{equation}\label{eq: best approximation L2}
\|\uu-\uu_{kh}\|_{L^2(I; L^2(\Om))}\le C \left(\|\uu-\chi\|_{L^2(I; L^2(\Om))}+\|\uu-R^S_h(\uu,p)\|_{L^2(I; L^2(\Om))}+\|\uu-\pi_k\uu\|_{L^2(I; L^2(\Om))}\right),
\end{equation}
and
\begin{equation}\label{eq: best approximation H1}
\begin{aligned}
\|\na(\uu-\uu_{kh})\|_{L^2(I; L^2(\Om))} \le C \bigg(\|\na(\uu-\chi)\|_{L^2(I; L^2(\Om))}&+\|\na(\uu-R^S_h(\uu,p))\|_{L^2(I; L^2(\Om))}\\
&+\|\na(\uu-\pi_k\uu)\|_{L^2(I; L^2(\Om))}\bigg),
\end{aligned}
\end{equation}
\end{subequations}
where  $R^S_h(\uu,p)$ is the stationary finite element Stokes projection defined in 
\eqref{eq:stationary_Stokes_projection} and $\pi_k$ is the time projection \revB{onto $X^0_k(\V)$,} with 
$\pi_k v|_{I_m} \revB{= v(t_m^-)}$ for $m=1,2,\dots,M$.
% $\in \revA{\mathcal P_{r,I_m}(L^2(\Omega)^d)}$ for $m=1,2,\dots,M$ on each subinterval $I_m$ by
% \begin{subequations}\label{eq: projection pi_k}
% \begin{align}
%     (\pi_k v-v,\phi)_{I_m\times \Omega}&=0,\quad \forall \phi\in \revA{\mathcal P_{r-1,I_m}(L^2(\Omega)^d)},\quad r>0,
% \label{eq: projection pi_k first}\\
% \pi_k v(t_m^-)&=v(t_m^-)
% \label{eq: projection pi_k second}.
% \end{align}
% \end{subequations}
% in the case $r = 0$, $\pi_k v$ is defined solely by the second condition.
\end{theorem}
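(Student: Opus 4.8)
The plan is to establish both estimates from the Galerkin orthogonality, using a direct energy argument for the gradient bound \eqref{eq: best approximation H1} and a duality argument building on it for the $L^2$ bound \eqref{eq: best approximation L2}. First I would record the Galerkin orthogonality. Since $\q \in L^2(I;L^2(\Om)^d)$, \Cref{thm:solvability_state_equation} gives $\uu \in L^2(I;\Vt)\cap H^1(I;\Vz) \hookrightarrow C(\bar I;\V)$, so $\uu$ is continuous in time with $\uu(0)=\oo$; hence all jump and initial contributions in $B(\uu,\cdot)$ vanish and $B(\uu,\vv_{kh}) = \sum_m (\partial_t\uu,\vv_{kh})_{I_m\times\Om} + \IOprod{\na\uu,\na\vv_{kh}}$, which equals $\IOprod{\q,\vv_{kh}}$ upon integrating \eqref{eq:weak_state} in time against $\vv_{kh} \in X_k^0(\Vh)$. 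Comparing with \eqref{eq:fully_discrete_div_free} yields $B(\uu-\uu_{kh},\vv_{kh})=0$ for all $\vv_{kh}\in X_k^0(\Vh)$.

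For the gradient estimate I would exploit the coercivity of $B$ on the discrete space. For piecewise constant $\vv_{kh}$ the term $\sum_m(\partial_t\vv_{kh},\vv_{kh})_{I_m\times\Om}$ drops, and rearranging the jump contributions via the telescoping identity $([\vv]_{m-1},\vv_{m-1}^+)_\Om = \tfrac12\|\vv_m\|^2_{L^2(\Om)} - \tfrac12\|\vv_{m-1}\|^2_{L^2(\Om)} + \tfrac12\|[\vv]_{m-1}\|^2_{L^2(\Om)}$ gives
\[
  B(\vv_{kh},\vv_{kh}) = \|\na\vv_{kh}\|^2_{\LtwoLtwo} + \tfrac12\|(\vv_{kh})_M^-\|^2_{L^2(\Om)} + \tfrac12\|(\vv_{kh})_0^+\|^2_{L^2(\Om)} + \tfrac12\sum_{m}\|[\vv_{kh}]_m\|^2_{L^2(\Om)} \ge \|\na\vv_{kh}\|^2_{\LtwoLtwo}.
\]
Setting $Z = \pi_k R_h^S(\uu,p) \in X_k^0(\Vh)$ and $\xi = \uu_{kh}-Z$, orthogonality gives $B(\xi,\xi) = B(\uu-Z,\xi)$, and I would bound the right-hand side using the dual representation of $B$ from \Cref{rem:stability_discrete_dual}, Cauchy--Schwarz, and the defining orthogonality of the Stokes--Ritz projection \eqref{eq:stationary_Stokes_projection} (to eliminate the spatial and pressure contributions against the discretely divergence-free $\xi$) together with the right-endpoint property of $\pi_k$ (to control the temporal terms). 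A triangle inequality then combines $\|\na\xi\|$ with the projection error $\|\na(\uu-Z)\|$, the latter being controlled by $\|\na(\uu-R_h^S\uu)\|$ and $\|\na(\uu-\pi_k\uu)\|$ via stability of $\pi_k$ and $R_h^S$; inserting an arbitrary $\chi$ through a further triangle inequality produces the stated three terms.

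For the $L^2$ estimate I would argue by duality. Writing $e = \uu - \uu_{kh}$ and choosing $g = e/\|e\|_{\LtwoLtwo}$, I let $\phi$ solve the continuous backward-in-time Stokes problem with right-hand side $g$, so that $\|e\|_{\LtwoLtwo} = \IOprod{e,g} = B(e,\phi)$, and let $\phi_{kh}\in X_k^0(\Vh)$ be the discrete dual solution from \Cref{rem:stability_discrete_dual}. Galerkin orthogonality in the first argument removes $B(e,\phi_{kh})$, and orthogonality of the dual problem in the second argument removes $B(\chi-\uu_{kh},\phi-\phi_{kh})$, leaving $\|e\|_{\LtwoLtwo} = B(\uu-\chi,\phi-\phi_{kh})$ for arbitrary $\chi$. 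I would then bound this bilinear form through the dual form of $B$: the time-derivative term is controlled by $\|\uu-\chi\|_{\LtwoLtwo}\|\partial_t\phi\|_{\LtwoLtwo}$, where $\|\partial_t\phi\|_{\LtwoLtwo}\le C\|g\|_{\LtwoLtwo} = C$ follows from the $H^1$-in-time regularity of the dual solution (\Cref{thm:solvability_state_equation} applied to $g\in L^2$), while the spatial and jump terms are handled by inserting $R_h^S\phi$ and $\pi_k\phi$ and invoking the energy estimate just established for the dual error $\phi-\phi_{kh}$, whose convergence orders follow from \Cref{thm:stokes_ritz}. This is precisely what converts the $H^1$-type bound into an $L^2$-type bound and supplies the spatial Ritz and temporal projection terms $\|\uu-R_h^S\uu\|_{\LtwoLtwo}$ and $\|\uu-\pi_k\uu\|_{\LtwoLtwo}$.

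The main obstacle I anticipate is the Stokes-specific bookkeeping: ensuring that the pressure and divergence contributions drop out when the Stokes--Ritz projection is tested against discretely divergence-free functions, and that the temporal jump and endpoint terms are absorbed by the right-endpoint projection $\pi_k$, so that no $H^1$-norm of the primal best-approximation error $\uu-\chi$ survives in the $L^2$ estimate. Equally delicate is justifying the duality identity $B(e,\phi)=\IOprod{e,g}$ at the level of regularity available (only $\partial_t\uu\in L^2(I;L^2(\Om)^d)$ and $\uu_{kh}$ piecewise constant), which requires care with the integration by parts in time encoded in the two representations of $B$.
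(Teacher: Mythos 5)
The paper does not actually prove this theorem: it refers to \cite[Theorems 11 and 13]{Leykekhman_Vexler_2023} and describes the method there as ``energy and duality arguments.'' Your plan --- a coercivity/energy argument against the comparison function $\pi_k R_h^S(\uu,p)$ for \eqref{eq: best approximation H1}, followed by a backward-in-time duality argument for \eqref{eq: best approximation L2} --- is exactly that family of argument, so the overall architecture is the intended one.

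There is, however, a concrete false step at the very start: the claimed exact Galerkin orthogonality $B(\uu-\uu_{kh},\vv_{kh})=0$. The weak formulation \eqref{eq:weak_state} is tested with $\vv\in\V$, i.e.\ exactly divergence-free functions, whereas $\vv_{kh}\in\Vh$ is only discretely divergence-free; for inf-sup stable pairs such as Taylor--Hood or MINI one has $\Vh\not\subset\V$. Testing \eqref{eq:associated_pressure} with $\vv_{kh}$ gives instead the perturbed relation
\begin{equation*}
  B(\uu-\uu_{kh},\vv_{kh}) = \IOprod{p,\nabla\cdot\vv_{kh}} \qquad \text{for all } \vv_{kh}\in X_k^0(\Vh),
\end{equation*}
and this pressure consistency term is not incidental bookkeeping: it is precisely the term that the Stokes--Ritz projection is built to cancel, since \eqref{eq:stationary_Stokes_projection} tested with a discretely divergence-free $\xi$ yields $\Oprod{\nabla(\uu-R_h^S(\uu,p)),\nabla\xi} = \Oprod{\nabla\cdot\xi,\,p}$. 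If you start from exact orthogonality, that cancellation has nothing to pair with, and the argument either leaves an uncontrolled $\IOprod{p,\nabla\cdot\xi}$ or tacitly assumes $\nabla\cdot\vv_{kh}=0$. The same defect reappears, mirrored, in the duality step: since $\nabla\cdot\uu_{kh}\neq 0$ pointwise, the identity $\|e\|_{\LtwoLtwo}=B(e,\phi)$ acquires an extra term $\IOprod{\nabla\cdot\uu_{kh},\,r}$ involving the dual pressure $r$, and likewise $B(\vv_{kh},\phi-\phi_{kh})=\IOprod{\nabla\cdot\vv_{kh},\,r}\neq 0$, so neither of the two cancellations you invoke there is exact either. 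You correctly identify the pressure/divergence interaction as the main obstacle and name the right tool, but as written the two orthogonality relations on which both halves of the proof rest are false; the argument must be rerun with the perturbed orthogonalities and with the Stokes--Ritz projections of both $(\uu,p)$ and of the dual pair $(\phi,r)$ inserted so that the primal and dual pressure terms cancel against the Ritz-projection identities. Once that repair is made, the remaining ingredients of your outline (coercivity of $B$ on piecewise constants, the right-endpoint property of $\pi_k$, and the $H^1$-in-time regularity $\|\partial_t\phi\|_{\LtwoLtwo}\le C\|g\|_{\LtwoLtwo}$ from \Cref{thm:solvability_state_equation}) are sound.
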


\begin{corollary}\label{cor: error estimate for convex L2}
If in addition to assumptions of Theorem \ref{thm: fully discrete best approximation L2},
the domain $\Omega$ is convex, then  there exists a constant $C$ independent of $k$ and $h$ such that
$$
\|\uu-\uu_{kh}\|_{\LtwoLtwo}\le C(h^2+k)\|\revB{\q}\|_{\LtwoLtwo}. 
$$
\end{corollary}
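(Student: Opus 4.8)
The plan is to invoke the best approximation estimate \eqref{eq: best approximation L2} of \Cref{thm: fully discrete best approximation L2} and to bound each of its three right-hand side terms by $C(h^2+k)\norm{\q}_{\LtwoLtwo}$: the spatial Ritz projection error $\norm{\uu - R^S_h(\uu,p)}_{L^2(I;L^2(\Om))}$ will contribute order $h^2$, the temporal projection error $\norm{\uu - \pi_k\uu}_{L^2(I;L^2(\Om))}$ will contribute order $k$, and the free term $\norm{\uu - \chi}_{L^2(I;L^2(\Om))}$ will be reduced to those two by a judicious choice of $\chi \in X^0_k(\Vh)$. The only inputs on the continuous solution are the regularity $\uu \in L^2(I;\Vt) \cap H^1(I;\Vz) \hookrightarrow C(\bar I;\V)$ and $p \in L^2(I;H^1(\Om))$, together with the a priori bounds $\norm{\uu}_{L^2(I;H^2(\Om))} + \norm{\partial_t\uu}_{\LtwoLtwo} \le C\norm{\q}_{\LtwoLtwo}$ and $\norm{p}_{L^2(I;H^1(\Om))} \le C\norm{\q}_{\LtwoLtwo}$, furnished by \Cref{thm:solvability_state_equation} and by \Cref{prop:associated_pressure} (the latter valid since $\Omega$ is convex).

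For the spatial term I would apply the two estimates of \Cref{thm:stokes_ritz} pointwise in time: on a convex domain the $L^2$ estimate combined with the $H^1$/$L^2$ estimate gives $\norm{\uu(t) - R^S_h(\uu(t),p(t))}_{L^2(\Om)} \le Ch^2\bigl(\norm{\uu(t)}_{H^2(\Om)} + \norm{p(t)}_{H^1(\Om)}\bigr)$ for a.e.\ $t$, so squaring, integrating over $I$ and using the a priori bounds yields $\norm{\uu - R^S_h(\uu,p)}_{L^2(I;L^2(\Om))} \le Ch^2\norm{\q}_{\LtwoLtwo}$. For the temporal term I would use that the right-endpoint projection $\pi_k$ obeys $\norm{\uu - \pi_k\uu}_{L^2(I_m;L^2(\Om))} \le Ck_m\norm{\partial_t\uu}_{L^2(I_m;L^2(\Om))}$, read off from $\uu(t) - \uu(t_m) = -\int_t^{t_m}\partial_t\uu$; summing over $m$ and invoking $\norm{\partial_t\uu}_{\LtwoLtwo} \le C\norm{\q}_{\LtwoLtwo}$ gives order $k$.

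I expect the free term $\norm{\uu - \chi}$ to be the crux. The naive choice $\chi = \pi_k R^S_h(\uu,p)$ forces pointwise-in-time evaluation of the Ritz projection and thus control of $\uu(t_m)$ in $H^2(\Om)$; equivalently, estimating the time-projection error of $R^S_h(\uu,p)$ through $\norm{\partial_t R^S_h(\uu,p)}$ would require, via the associated discrete pressure, $\partial_t\uu \in L^2(I;H^1(\Om))$ — regularity that is unavailable, since $\uu$ only lies in $C(\bar I;\V)$ and not in $C(\bar I;\Vt)$. To circumvent this I would instead take $\chi = \Pi_k R^S_h(\uu,p)$, where $\Pi_k$ is the $L^2(I)$-orthogonal projection onto piecewise constants in time; this lands in $X^0_k(\Vh)$ because $R^S_h(\uu(t),p(t))\in\Vh$ for each $t$ (as $\uu(t)\in\V$) and averaging over $I_m$ preserves membership in the linear space $\Vh$. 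Writing $\uu - \chi = (\uu - \Pi_k\uu) + \Pi_k\bigl(\uu - R^S_h(\uu,p)\bigr)$, the first summand is again a time-projection error of order $k$, while for the second I would use that $\Pi_k$ is a contraction on $L^2(I;L^2(\Om))$, so that $\norm{\Pi_k(\uu - R^S_h(\uu,p))}_{L^2(I;L^2(\Om))} \le \norm{\uu - R^S_h(\uu,p)}_{L^2(I;L^2(\Om))} \le Ch^2\norm{\q}_{\LtwoLtwo}$. Collecting the three bounds in \eqref{eq: best approximation L2} then delivers the claimed estimate; the decisive step is precisely this decoupling, using the $L^2$-stable time projection $\Pi_k$ for $\chi$ rather than the nodal projection $\pi_k$, since it sidesteps the higher temporal regularity of $\uu$ that is not at our disposal.
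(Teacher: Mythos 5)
Your proof is correct and follows the route the paper intends: the corollary is stated without an explicit proof (the explicit-rate result is delegated to the cited companion work), and the standard derivation is exactly what you carry out — bounding the three terms of \eqref{eq: best approximation L2} via the $H^2$/$H^1$ regularity of $(\uu,p)$ on convex domains and $\partial_t \uu \in L^2(I\times\Om)$. Your choice of $\chi = \Pi_k R^S_h(\uu,p)$ with the $L^2$-stable time average, rather than the nodal interpolant, is a sound way to close the argument without invoking temporal regularity of $\uu$ beyond $H^1(I;\Vz)$.
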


\section{Variational Discretization of the Optimal Control Problem}\label{sec:var_disc_opt_control}
In this section we consider the optimal control problem subject to the fully discretized Stokes equations.
We consider a variational discretization for the controls, i.e., do not fix a finite dimensional 
approximation of the control space yet,
\revision{cf., \cite{deckelnick_variational_2011,hinze_variational_2005}}
The problem reads
\begin{subequations}
\begin{equation}\label{eq:discrete_optimal}
\text{Minimize }\; J(\q_{kh},\uu_{kh}) = \frac{1}{2} \norm{\uu_{kh} - \uu_d}^2_{\LtwoLtwo}+ \frac{\alpha}{2} \norm{\q_{kh}}^2_{\LtwoLtwo}
\end{equation}
over all $\q_{kh} \in \Q_{ad}$, $\uu_{kh} \in X^0_{k}(\Vh)$, subject to
\begin{equation}\label{eq:discrete_state0}
B(\uu_{kh},\vv_{kh}) = ( \q_{kh}, \vv_{kh})_{I\times\Om} \quad \text{for all}\quad \vv_{kh} \in X^0_{k}(\Vh)
\end{equation}
\begin{equation}\label{eq:discrete_constraint}
  G_\ww(\uu_{kh})\mid_{I_{m}}\le \beta\quad\text{for}\quad m=1,2,\dots,M.
\end{equation}
 \end{subequations}
Following the structure of \Cref{sect:cont_opt_control} and using 
\Cref{thm:stability_discrete_solution_stokes},
we introduce the discrete analog to the control state map,
\begin{equation}
  S_{kh}\colon L^2(I;H^{-1}(\Om)^d) + L^1(I;L^2(\Om)^d) \to X^0_k(\Vh),\quad \q \mapsto \uu_{kh} = \uu_{kh}(\q)
  \text{ solving \eqref{eq:fully_discrete_div_free}}.
\end{equation}
The finitely many state constraints we describe with the help of the continuous linear operator 
$\mathcal{G}_{kh}\colon \Q_{ad} \to \R^M$
with $(\mathcal{G}_{kh}(\q))_m:=G_{\bf w}\circ S_{kh} (\q) \mid_{I_m}$ for $m=1,2,\dots,M$. Using the set 
$$
\mathcal{K}_{kh}:=\{\vv \in\mathbb{R}^M\ \mid \ \vv _m\le b,\ m=1,2,\dots,M \},
$$
we can rewrite the problem \eqref{eq:discrete_optimal}-\eqref{eq:discrete_constraint} in the reduced form:
\begin{equation}\label{discrete optimal problem reduced}
  \text{Minimize }\ j_{kh}(\q_{kh}):=J(\q_{kh},S_{kh}(\q_{kh})) \quad \text{over} \quad \q_{kh}\in \Q_{ad}\quad 
\text{subject to}\quad 
\mathcal{G}_{kh}(\q_{kh})\in \mathcal{K}_{kh}.
\end{equation}
Before discussing wellposedness and optimality conditions of this discrete problem, we shall show, that the 
Slater assumption on the continuous level carries over to the discrete problem.
\revC{%
  As we achieve this with the finite element error estimates presented in \Cref{sec:discretization},
  we need to impose a rather weak coupling condition between $k$ and $h$, allowing us to deduce convergence
  in the result of \Cref{cor: error estimate for convex}.
  Throughout the remainder of this work, we thus work under the following assumption.
  \begin{assumption}\label{ass:khCoupling}
    There exists a function $\Phi\colon (0,1) \to (0,\infty)$ with 
    $\lim_{h\to 0} \Phi(h) = 0$, such that the discretization parameters $k$ and $h$ satisfy
    \begin{equation*}
      \left|\ln\left(\frac{T}{k}\right)\right| h \le \Phi(h).
    \end{equation*}
  \end{assumption}
  \begin{remark}
    This assumption is valid, e.g. if there exists a constant $C> 0$ such that 
    $\left|\ln \left(\frac{T}{k}\right)\right| h |\ln h| \le C$. As the choice of the term $|\ln h|$ in such a condition can be
    made arbitrarily weak, we have chosen to work under the more general formulation of \Cref{ass:khCoupling}.
  \end{remark}
}
 \begin{lemma}\label{lemm:discrete_Slater}
  There exists $h_0 > 0$ such that for any \revC{$h\le h_0$ and $k$ satisfying \Cref{ass:khCoupling}},
  the Slater point 
  $\tilde \q \in \Q_{ad}$ from \Cref{Slater} satisfies the following discrete Slater condition
  \begin{equation*}
    \Gw(\uu_{kh}(\tilde q)) < \beta \revC{\qquad \text{for all } t \in \bar I.}
  \end{equation*}
\end{lemma}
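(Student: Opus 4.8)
The plan is to transport the strict feasibility of the continuous Slater point $\tilde\q$ to the discrete state $\uu_{kh}(\tilde\q)$ by means of the uniform-in-time error estimate of \Cref{cor: error estimate for convex}. By \Cref{remark: continuity of G} the map $t \mapsto \Gw(\uu(\tilde\q))(t)$ lies in $C(\bar I)$, and \Cref{Slater} guarantees $\Gw(\uu(\tilde\q))(t) < \beta$ for every $t$. Since $\bar I$ is compact, this produces a uniform gap: there is $\delta > 0$ with $\Gw(\uu(\tilde\q))(t) \le \beta - \delta$ for all $t \in \bar I$. It therefore suffices to show that, for $h$ small and $k$ coupled to $h$ through \Cref{ass:khCoupling}, the functional $\Gw$ applied to the discrete state deviates from its continuous counterpart by less than $\delta$, uniformly in time.

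For the deviation I would estimate, for every $t$, by Cauchy--Schwarz
$$\left|\Gw(\uu_{kh}(\tilde\q))(t) - \Gw(\uu(\tilde\q))(t)\right| = \left|\Oprod{\uu_{kh}(\tilde\q)(t) - \uu(\tilde\q)(t),\ww}\right| \le \twonorm{\ww}\,\|\uu_{kh}(\tilde\q) - \uu(\tilde\q)\|_{L^\infty(I;L^2(\Om))}.$$
If $\ww = \oo$ the claim is trivial, so assume $\twonorm{\ww} > 0$. Controlling the last factor through \Cref{cor: error estimate for convex} requires $\tilde\q \in L^\infty(I;L^2(\Om)^d)$; when the bounds $\q_a,\q_b$ are finite this is automatic, and in general one first replaces $\tilde\q$ by a componentwise truncation $P_{[-n,n]^d}(\tilde\q) \in \Q_{ad}\cap L^\infty(I\times\Om)^d$, which by continuity of $S$ (\Cref{thm:regularity_sol_operator}) and of $\Gw$ still satisfies the strict Slater inequality for $n$ large, so we may assume $\tilde\q \in L^\infty(I;L^2(\Om)^d)$. \Cref{cor: error estimate for convex} then supplies a constant $C$ independent of $k,h$ with
$$\|\uu(\tilde\q) - \uu_{kh}(\tilde\q)\|_{L^\infty(I;L^2(\Om))} \le C\,\ell_k^2\,(h^2 + k)\,\|\tilde\q\|_{L^\infty(I;L^2(\Om))}.$$

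The crux is to argue that this bound falls below $\delta/\twonorm{\ww}$ for all sufficiently fine discretizations obeying the coupling. Writing $\ell_k^2(h^2+k) = (\ell_k h)^2 + \ell_k^2 k$ with $\ell_k = \ln(T/k)$, the coupling $|\ln(T/k)|\,h \le \Phi(h)$ of \Cref{ass:khCoupling} is exactly what tames the logarithmic factor in the spatial term, giving $(\ell_k h)^2 \le \Phi(h)^2 \to 0$ as $h \to 0$, while the temporal term satisfies $\ell_k^2 k = |\ln(T/k)|^2 k \to 0$ as $k \to 0$. Hence the right-hand side tends to zero, and one fixes $h_0$ (with $k$ small along the admissible coupling) so that the deviation is strictly smaller than $\delta$; combining the three displayed inequalities then yields $\Gw(\uu_{kh}(\tilde\q))(t) < \beta$ for all $t$, the asserted discrete Slater condition. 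The main obstacle is precisely this interplay between the logarithmically weighted $L^\infty(I;L^2)$ estimate and the coupling: without \Cref{ass:khCoupling} the factor $\ell_k$ multiplying $h$ would spoil convergence of the spatial error, so the delicate point is to keep $\ell_k h$ under control while retaining the uniform gap $\delta$ inherited from the continuous problem.
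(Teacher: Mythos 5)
Your proposal is correct and follows the same overall strategy as the paper (compactness of $\bar I$ gives a uniform gap $\delta$, Cauchy--Schwarz reduces everything to $\|\uu(\tilde\q)-\uu_{kh}(\tilde\q)\|_{L^\infty(I;L^2(\Om))}$, and \Cref{ass:khCoupling} tames the logarithm in \Cref{cor: error estimate for convex}), but you resolve the one genuine technical obstacle --- that \Cref{cor: error estimate for convex} needs $L^\infty(I;L^2(\Om)^d)$ data while $\tilde\q$ is a priori only in $\Q_{ad}\subset \LtwoLtwo$ --- by a different device. The paper keeps the original $\tilde\q$ and inserts an auxiliary smooth approximant $\hat\q$: by the triangle inequality, $\|(\uu-\uu_{kh})(\tilde\q)\|_{L^\infty(I;L^2(\Om))}$ is split into $\|(\uu-\uu_{kh})(\tilde\q-\hat\q)\|_{L^\infty(I;L^2(\Om))}$, controlled by $C\|\tilde\q-\hat\q\|_{\LtwoLtwo}$ via the \emph{continuous and discrete} stability results (\Cref{thm:solvability_state_equation} and \Cref{thm:stability_discrete_solution_stokes}), plus the error estimate applied to $\hat\q$; density of $C^\infty$ then finishes the argument. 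You instead truncate the Slater point itself and apply the error estimate directly to $P_{[-n,n]^d}(\tilde\q)$. Your route avoids invoking the discrete stability estimate on the difference, but note two points: (i) you should check that $P_{[-n,n]^d}(\tilde\q)\in\Q_{ad}$, which does hold for $n$ exceeding the finite components of $\q_a,\q_b$ since these are constant; and (ii) when $\tilde\q$ is genuinely unbounded, your argument establishes the discrete Slater condition for the \emph{truncated} control rather than for the $\tilde\q$ of \Cref{Slater} as literally stated --- this is harmless for everything downstream (only the existence of some discrete Slater point in $\Q_{ad}$ is used, e.g.\ in \Cref{lem: fully discrete stability} and \Cref{lemm:fully_discrete_Slater}), whereas the paper's splitting proves the statement for the original $\tilde\q$. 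Both arguments handle the $k$--$h$ interplay identically, and both implicitly require $k\to 0$ along the refinement so that $\ell_k^2 k\to 0$.
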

\begin{proof}
  Using that $G_{\bf w}(\uu(\tilde{\q}))<\beta$ \revC{in $\bar I$},
  by the Slater condition Assumption \ref{Slater} there exists $\delta>0$ 
  such that $G_{\bf w}(\uu(\tilde{\q}))\le \beta-\delta$. 
  For arbitrary $\hat \q \in C^\infty(I\times \Om)^d$ it holds due to triangle inequality
  \begin{equation*}
    \|\uu(\tilde \q) - \uu_{kh}(\tilde \q)\|_{L^\infty(I;L^2(\Om))}
    \le 
    \|\uu(\tilde \q) - \uu_{kh}(\tilde \q) - \uu(\hat \q) + \uu_{kh}(\hat \q)\|_{L^\infty(I;L^2(\Om))}+
    \|\uu(\hat \q) - \uu_{kh}(\hat \q)\|_{L^\infty(I;L^2(\Om))}.
  \end{equation*}
  Using the continuous and fully discrete stability results of the state equations, presented in 
  \Cref{thm:solvability_state_equation} and \Cref{thm:stability_discrete_solution_stokes},
  as well as the error estimate \Cref{cor: error estimate for convex} for the problem with right hand side 
  $\hat \q$, we obtain
  \begin{equation*}
    \|\uu(\tilde \q) - \uu_{kh}(\tilde \q)\|_{L^\infty(I;L^2(\Om))}
    \le 
    C \|\tilde \q - \hat \q\|_{L^2(I\times \Om)} +
    C \ell_k^2 (k + h^2) \|\hat \q\|_{L^\infty(I;L^2(\Om))}.
  \end{equation*}
  \revC{For any} $\varepsilon > 0$, due to the density of $C^\infty(I\times \Om)^d$ in ${L^2(I;L^2(\Om)^d)}$,
  we can find $\hat \q_\varepsilon$ such that 
  $C \|\tilde \q - \hat \q_\varepsilon\|_{L^2(I \times \Om))} < \frac{\varepsilon}{2}$. Moreover, 
  \revC{for $h\le h_0$ sufficiently small, and $k$ satisfying \Cref{ass:khCoupling}}, it also holds 
  $C \ell_k^2 (k + h^2) \|\hat \q_\varepsilon\|_{L^\infty(I;L^2(\Om))} 
  < \frac{\varepsilon}{2}$.
  Thus in total 
  $\|\uu(\tilde \q) - \uu_{kh}(\tilde \q)\|_{L^\infty(I;L^2(\Om))} < \varepsilon$.
  \revC{Choosing $\varepsilon$ small enough, such that $\|\ww\|_{L^2(\Om)}\varepsilon < \delta$, we obtain}
  \begin{equation}\label{eq: discrete Slater}
  G(\uu_{kh}(\tilde{\q}))=G(\uu(\tilde{\q}))+G(\uu_{kh}(\tilde{\q})-\uu(\tilde{\q}))
  \le \beta-\delta+\|\ww\|_{L^2(\Om)}\|\uu(\tilde{\q})-\uu_{kh}(\tilde{\q})\|_{L^\infty(I;L^2(\Om))}<\beta.
  \end{equation}
%
% \textcolor{red}{The problem, if we want to use an explicit order of convergence here is the following:
%   we need a convergence result for $\|\uu(\tilde \q) - \uu_{kh}(\tilde \q)\|_{L^\infty(I;L^2(\Om))}$ 
%   but if no control constraints are present, we need it for 
%   $\tilde \q \in L^2(I;L^2(\Om)^d)$.
%   In this case however, the pressure does not have sufficient regularity, in order to 
%   obtain a regularity estimate with no k-h coupling.
%   We would somehow have to prove $p \in L^\infty(I;L^2(\Om))$, Can we do this?
%   If we allow some coupling, what we can do is the following (following the proof of
%   \cite[Theorem 7.4]{Behringer_Leykekhman_Vexler_2023}:
%   \begin{align*}
%     \|\uu(\tilde \q) - \uu_{kh}(\tilde \q)\|_{L^\infty(I;L^2(\Om))}
%     & \le C \ln \frac{T}{k} \|\uu - \pi_k R_h^S(\uu,p)\|_{L^\infty(I;L^2(\Om))}\\
%     & \le C \ln \frac{T}{k} \left[\|\uu - \pi_k \uu\|_{L^\infty(I;L^2(\Om))} 
%     + k^{-\frac{1}{2}}\|\uu - R_h^S(\uu,p)\|_{\LtwoLtwo}\right]\\
%     & \le C \ln \frac{T}{k} \left[k^{\frac{1}{2}} \|\partial_t \uu\|_{\LtwoLtwo} 
%     + k^{-\frac{1}{2}}h^2 (\|\nabla^2 \uu\|_{\LtwoLtwo} + \|\nabla p\|_{\LtwoLtwo})\right]\\
%     & \le C \ln \frac{T}{k} \left[k^{\frac{1}{2}} + k^{-\frac{1}{2}}h^2\right] \|\tilde \q\|_{\LtwoLtwo}.
%   \end{align*}
%   Thus we would attain a coupling of 
%   $h = o\!\left(k^{\frac{1}{4}}(\ln \frac{T}{k})^{-\frac{1}{2}}\right)$.
% }
\end{proof}

\begin{theorem}
  \revC{Let $k$ and $h$ satisfy \Cref{ass:khCoupling}, and let $h$ be small enough. Then}
    there exists a unique solution $(\bar \q_{kh},\bar \uu_{kh})$ to the optimal control problem
\eqref{eq:discrete_optimal}-\eqref{eq:discrete_constraint}.
\end{theorem}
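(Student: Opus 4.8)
The plan is to follow the direct method of the calculus of variations, exactly as in the continuous existence result \Cref{thm:existence_optimal_control}, but the argument is in fact simpler here because the state lives in the finite-dimensional space $X^0_k(\Vh)$. First I would pass to the reduced formulation \eqref{discrete optimal problem reduced} and record three structural properties of $j_{kh}(\q) = J(\q,S_{kh}(\q))$. Since $S_{kh}$ is a bounded linear operator by \Cref{thm:stability_discrete_solution_stokes}, the map $\q \mapsto S_{kh}(\q) - \uu_d$ is affine, so the tracking term is convex, while the regularization term $\frac{\alpha}{2}\|\q\|^2_{\LtwoLtwo}$ with $\alpha > 0$ is strictly convex and coercive. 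Hence $j_{kh}$ is strictly convex, continuous and coercive on $L^2(I;L^2(\Om)^d)$.

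Next I would verify that the feasible set $F := \{\q \in \Q_{ad} : \mathcal{G}_{kh}(\q) \in \mathcal{K}_{kh}\}$ is nonempty, closed and convex. Convexity and closedness follow because $\Q_{ad}$ and $\mathcal{K}_{kh}$ are closed and convex, and $\mathcal{G}_{kh} = G_\ww \circ S_{kh}$ (restricted to each $I_m$) is a continuous linear map, so that $F$ is the intersection of $\Q_{ad}$ with the preimage $\mathcal{G}_{kh}^{-1}(\mathcal{K}_{kh})$ of a closed convex set under a continuous affine map. Nonemptiness is precisely where the hypotheses on the discretization enter: for $h \le h_0$ and $k$ satisfying \Cref{ass:khCoupling}, \Cref{lemm:discrete_Slater} guarantees that the continuous Slater point $\tilde\q$ satisfies $G_\ww(\uu_{kh}(\tilde\q)) < \beta$ on all of $\bar I$, so in particular $\tilde\q \in F$ and $F \neq \emptyset$.

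With these properties in hand, existence follows by the direct method: I would take a minimizing sequence $\{\q_n\} \subset F$; coercivity of $j_{kh}$ bounds it in $L^2(I;L^2(\Om)^d)$, so a subsequence converges weakly, $\q_n \rightharpoonup \bar\q_{kh}$; since $F$ is closed and convex it is weakly closed, giving $\bar\q_{kh} \in F$; and weak lower semicontinuity of the convex continuous functional $j_{kh}$ yields $j_{kh}(\bar\q_{kh}) \le \liminf_n j_{kh}(\q_n) = \inf_F j_{kh}$, so $\bar\q_{kh}$ is a minimizer with associated state $\bar\uu_{kh} = S_{kh}(\bar\q_{kh})$. Uniqueness is then immediate from the strict convexity of $j_{kh}$. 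The only genuinely nontrivial ingredient is the nonemptiness of $F$, i.e. transferring the Slater condition to the discrete level; this is the reason the smallness of $h$ and the coupling \Cref{ass:khCoupling} appear in the hypotheses, and it has already been handled in \Cref{lemm:discrete_Slater}, so the remaining steps are routine.
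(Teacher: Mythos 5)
Your proof is correct and follows essentially the same route as the paper, which likewise reduces the matter to the feasibility of $\tilde\q$ via \Cref{lemm:discrete_Slater} and then invokes the direct method exactly as in \Cref{thm:existence_optimal_control}. Your observation that the pointwise-in-time state constraint can be handled by weak closedness of the convex feasible set (rather than the compactness argument needed on the continuous level) is a legitimate and natural simplification in the discrete setting, but it does not change the substance of the argument.
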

\begin{proof}
  As \Cref{lemm:discrete_Slater} shows feasibility of $(\tilde \q,\uu_{kh}(\tilde \q))$ under the given 
  assumptions, the existence proof follows the same steps as the one of \Cref{thm:existence_optimal_control}
  on the continuous level.
\end{proof}

%\subsubsection{Discrete Optimality system} \

\begin{theorem}[Discrete first order optimality system]\label{discrete first order optimality system}
  A control $\bar{\q}_{kh}\in \Q_{ad}$ and the associated state $\bar{\uu}_{kh}=\uu_{kh}(\bar{\q}_{kh}) \in X^0_{k}(\Vh)$ is \revC{the} optimal solution to the problem
  \eqref{eq:discrete_optimal}-\eqref{eq:discrete_constraint}
  if and only if there exists an adjoint state $\bar{\zz}_{kh}  \in X^0_{k}(\Vh)$ and a Lagrange multiplier $\bar{\mu}_{kh}\in \revB{L^1(I)}$ 
that satisfy:\\
\begin{subequations}
{Discrete state equation}
\begin{equation}\label{discrete state}
B(\bar{\uu}_{kh},\vv_{kh}) = ( \bar{\q}_{kh}, \vv_{kh})_{I\times\Om}, \quad \forall \vv_{kh} \in X^0_{k}(\Vh);
\end{equation}
{Discrete state constraint and complementarity conditions}
\begin{equation}\label{discrete complementary}
  \revB{\Gw(\bar \uu_{kh})|_{I_m} \le \beta, \ m=1,...,M}, \quad 
  \bar{\mu}_{kh}\geq 0 \quad\text{and}\quad \langle \bar{\mu}_{kh},\revision{\beta}-\revB{\Gw(\bar \uu_{kh})} \rangle=0;
\end{equation}
{Discrete adjoint equation}
\begin{equation}\label{discrete adjoint}
B(\vv_{kh},\bar{\zz}_{kh}) = ( \bar{\uu}_{kh}-\uu_d+{\bar{\mu}_{kh}}\ww, \vv_{kh})_{I\times\Om} \quad \forall \vv_{kh} \in X^0_{k}(\Vh);
\end{equation}
{Discrete variational inequality}
\begin{equation}\label{discrete variational}
\left(\alpha\bar{\q}_{kh}+\bar{\zz}_{kh}, \delta \q-\bar{\q}_{kh}\right)_{I\times \Om}\geq 0 \quad  \forall \delta \q\in \Q_{ad} \quad \Leftrightarrow \quad \bar{\q}_{kh} =P_{[\q_a,\q_b]}\left(-\frac{1}{\alpha}\bar{\zz}_{kh}\right).
\end{equation}
\end{subequations}
  Furthermore, there exist $\bar \mu_{kh}^m \in \R_{\ge0}$, $m=1,2,\dots,M$, such that the discrete Lagrange
  multiplier $\bar{\mu}_{kh}\in L^1(I)$ satisfies the expression 
  \begin{equation}\label{discrete Lagrange}
    \bar \mu_{kh} = \sum_{m=1}^M \frac{\bar \mu_{kh}^m}{k_m} \chi_{I_m},
  \end{equation}
  where $\chi_{I_m}$ denotes the characteristic function of the interval $I_m$.
\end{theorem}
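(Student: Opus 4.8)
The plan is to mirror the proof of \Cref{thm:continuous first order optimality}, but working from the discrete reduced problem \eqref{discrete optimal problem reduced}. First I would record that $j_{kh}$ is convex and continuously differentiable on the convex admissible set $\Q_{ad}$, that $\mathcal{G}_{kh}$ is affine linear and continuous, and that $\mathcal{K}_{kh}\subset\R^M$ is closed and convex. By \Cref{lemm:discrete_Slater} the Slater point $\tilde\q$ satisfies $\mathcal{G}_{kh}(\tilde\q)\in\operatorname{int}\mathcal{K}_{kh}$, so the generalized KKT theorem (cf.\ \cite[Theorem 5.2]{casas_boundary_1993}) applies in exactly the same way as on the continuous level, and sufficiency of the stated conditions follows from convexity, again as in the continuous case.

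The key structural difference is that only finitely many constraints are imposed, so the constraint space is $\R^M$ and the multiplier lives in $(\R^M)^*\cong\R^M$. Thus the KKT theorem produces a vector $(\bar\mu_{kh}^1,\dots,\bar\mu_{kh}^M)$ with $\bar\mu_{kh}^m\ge0$ (from the cone structure of $\mathcal{K}_{kh}$), the componentwise complementarity $\bar\mu_{kh}^m(\beta-\Gw(\bar\uu_{kh})|_{I_m})=0$, the adjoint state
\[
\bar\zz_{kh}=S_{kh}'(\bar\q_{kh})^*(\bar\uu_{kh}-\uu_d)+\mathcal{G}_{kh}'(\bar\q_{kh})^*(\bar\mu_{kh}^1,\dots,\bar\mu_{kh}^M),
\]
and the gradient inequality $(\alpha\bar\q_{kh}+\bar\zz_{kh},\delta\q-\bar\q_{kh})_{I\times\Om}\ge0$ for all $\delta\q\in\Q_{ad}$.

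The central computation, which I expect to be the most delicate step, is to evaluate $\mathcal{G}_{kh}^*$ explicitly; this simultaneously yields the adjoint equation \eqref{discrete adjoint} and the representation \eqref{discrete Lagrange}. Since $S_{kh}(\q)$ is piecewise constant in time, $t\mapsto\Gw(S_{kh}(\q))(t)$ is constant on each $I_m$ with value $(\mathcal{G}_{kh}(\q))_m$, so for any $\lambda\in\R^M$,
\[
\sum_{m=1}^M\lambda_m(\mathcal{G}_{kh}(\q))_m
=\sum_{m=1}^M\frac{\lambda_m}{k_m}\int_{I_m}\Gw(S_{kh}(\q))(t)\,\d t
=\Big(S_{kh}(\q),\Big(\sum_{m=1}^M\tfrac{\lambda_m}{k_m}\chi_{I_m}\Big)\ww\Big)_{I\times\Om}.
\]
Setting $\bar\mu_{kh}:=\sum_{m=1}^M\tfrac{\bar\mu_{kh}^m}{k_m}\chi_{I_m}\in L^1(I)$ and using $(S_{kh}(\q),\cdot)_{I\times\Om}=(\q,S_{kh}^*\cdot)_{I\times\Om}$ then gives $\mathcal{G}_{kh}'(\bar\q_{kh})^*(\bar\mu_{kh}^m)_m=S_{kh}^*(\bar\mu_{kh}\ww)$, hence $\bar\zz_{kh}=S_{kh}^*(\bar\uu_{kh}-\uu_d+\bar\mu_{kh}\ww)$, which is precisely the weak form \eqref{discrete adjoint} and forces $\bar\zz_{kh}\in X^0_k(\Vh)$ by \Cref{thm:stability_discrete_solution_stokes} applied to the dual equation (\Cref{rem:stability_discrete_dual}). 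The care here lies in tracking the $1/k_m$ scaling so that the finite-dimensional duality pairing matches the $L^2(I\times\Om)$ inner product; the same scaling shows, using that $\beta-\Gw(\bar\uu_{kh})$ is constant on each $I_m$, that the componentwise complementarity is equivalent to the integral form $\langle\bar\mu_{kh},\beta-\Gw(\bar\uu_{kh})\rangle=0$ in \eqref{discrete complementary}.

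Finally, I would convert the gradient inequality into the projection formula in \eqref{discrete variational}. Since $\Q_{ad}$ imposes only pointwise box constraints on $\q$, testing $(\alpha\bar\q_{kh}+\bar\zz_{kh},\delta\q-\bar\q_{kh})_{I\times\Om}\ge0$ against all $\delta\q\in\Q_{ad}$ is equivalent to the pointwise-a.e.\ obstacle condition characterizing the metric projection onto $[\q_a,\q_b]$, giving $\bar\q_{kh}=\Pad(-\tfrac1\alpha\bar\zz_{kh})$ by the standard argument.
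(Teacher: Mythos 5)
Your proposal is correct and follows essentially the same route as the paper: the discrete Slater condition from \Cref{lemm:discrete_Slater} enables the generalized KKT theorem exactly as in the continuous case, producing a finite-dimensional multiplier in $\R^M_{\ge 0}$ which is then identified with the $L^1(I)$ function via the $1/k_m$-scaled representation \eqref{discrete Lagrange}. The paper's proof is terse ("similarly to the proof of Theorem \ref{thm:continuous first order optimality}"), and your explicit computation of $\mathcal{G}_{kh}'(\bar\q_{kh})^*$ simply fills in the details the paper leaves implicit.
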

\begin{proof}
    In \Cref{lemm:discrete_Slater}, we have shown, that \revC{under \Cref{ass:khCoupling} and for small
    enough $h$,} there holds $\mathcal{G}_{kh}(\tilde{\q})\in \operatorname{int}(\mathcal{K}_{kh})$.
    %Arguing similarly to the proof in Theorem \ref{thm:continuous first order optimality}, Using the Slater condition in Assumption \ref{Slater}, later in the proof of Lemma \ref{lem: fully discrete stability}, more specifically in \eqref{eq: discrete Slater}, we show that $\mathcal{G}_{kh}(\tilde{\q})\in \operatorname{int}\mathcal{K}_{kh}$ provided $k$ and $h$ small enough.
    Similarly to the proof of Theorem \ref{thm:continuous first order optimality} we obtain that the optimality
    of $\bar{q}_{kh}$ is equivalent to the existence of a Lagrange multiplier 
    $(\bar{\mu}_{kh}^m)_{m=1}^M\in \mathbb{R}^M_{\ge 0}$ and the adjoint state 
    $\bar{\zz}_{kh}\in X^0_{k}(\Vh)$ satisfying \eqref{discrete complementary}, \eqref{discrete adjoint} and
    \eqref{discrete variational}. Finally, by the construction given in \eqref{discrete Lagrange},
    $\bar{\mu}_{kh}$ is an element of \revB{$L^1(I)$}.
\end{proof}

\begin{remark} \label{rem: norm mu_kh}
  Notice, that from the definition \eqref{discrete Lagrange} and using that \revB{$\bar \mu_{kh} \ge 0$,
  it holds}
$$
\revB{\|\bar \mu_{kh}\|_{L^1(I)} =} \|\bar{\mu}_{kh}\|_{C(\bar{I})^*}=\langle \bar{\mu}_{kh},1 \rangle
\revB{= \sum_{m=1}^M \bar \mu_{kh}^m.}
$$
\end{remark}

\begin{remark}
  We would like to point out that although the state $\bar{\uu}_{kh}$ and the adjoint $\bar{\zz}_{kh}$ are 
  fully discrete, the corresponding  control  $\bar{\q}_{kh}\in \Q_{ad}$ is piecewise constant in time via
  \eqref{discrete variational}, but not necessary piecewise \revB{polynomial in space with respect to the 
  given mesh, due to the projection onto $[\q_a,\q_b]$.}
\end{remark}

With the optimality conditions established, we now show the following 
stability of optimal solutions to the discrete problem subject to different discretization levels.

%\subsection{Stability results}

\begin{lemma}\label{lem: fully discrete stability}
  \revC{Under \Cref{ass:khCoupling} and for $h$ small enough, there exists $C>0$ independent of $k$, $h$, 
  such that} the
  optimal control $\bar{\q}_{kh}\in \Q_{ad}$  of the variationally discretized problem
  \eqref{eq:discrete_optimal} - \eqref{eq:discrete_constraint}, together with its corresponding state 
  $\bar{\uu}_{kh}\in X^0_{k}(\Vh)$ and corresponding multiplier $\bar{\mu}_{kh}\in \revB{L^1(\bar I)}$
  satisfy the bound
$$
\|\bar \q_{kh}\|_{L^\infty(I;L^2(\Om))}+
%\|\bar{\q}_{kh}\|_{L^2(I\times \Om)}+
\|\bar{\uu}_{kh}\|_{\LtwoLtwo}+\|\bar{\mu}_{kh}\|_{L^1(\bar I)}\le C.
%\quad \forall k\le k_0\ \text{ and }\ \forall h\le h_0.
$$
\end{lemma}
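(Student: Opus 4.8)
The plan is to bound the three quantities in turn, using the discrete optimality system \eqref{discrete state}--\eqref{discrete variational}, the discrete stability estimates of \Cref{thm:stability_discrete_solution_stokes} and \Cref{rem:stability_discrete_dual}, and a \emph{uniform} version of the discrete Slater condition from \Cref{lemm:discrete_Slater}, all of which hold under \Cref{ass:khCoupling} for $h$ small.

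First I would obtain the $\LtwoLtwo$ bounds on control and state by comparing objective values. Since $\tilde\q$ is feasible for the discrete problem by \Cref{lemm:discrete_Slater}, optimality of $\bar\q_{kh}$ gives $J(\bar\q_{kh},\bar\uu_{kh}) \le J(\tilde\q,\uu_{kh}(\tilde\q))$. The right-hand side is bounded uniformly in $k,h$, because $\tilde\q$ is fixed and $\|\uu_{kh}(\tilde\q)\|_{\LtwoLtwo}$ is controlled by \Cref{thm:stability_discrete_solution_stokes}. Reading off the two terms of $J$ yields $\|\bar\q_{kh}\|_{\LtwoLtwo}\le C$ and, by the triangle inequality against the fixed $\uu_d$, also $\|\bar\uu_{kh}\|_{\LtwoLtwo}\le C$.

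The main work is the bound on $\bar\mu_{kh}$. I would first sharpen \Cref{lemm:discrete_Slater} to a uniform gap: choosing $\varepsilon$ with $\|\ww\|_{L^2(\Om)}\varepsilon \le \delta/2$ in \eqref{eq: discrete Slater} gives $\Gw(\uu_{kh}(\tilde\q))(t) \le \beta - \delta/2$ for all $t$, with $\delta$ independent of $k,h$. Since $\bar\mu_{kh}\ge 0$, this yields $\tfrac{\delta}{2}\|\bar\mu_{kh}\|_{L^1(I)} \le \langle \bar\mu_{kh},\, \beta - \Gw(\uu_{kh}(\tilde\q))\rangle$. Using the complementarity condition $\langle\bar\mu_{kh},\beta-\Gw(\bar\uu_{kh})\rangle=0$ from \eqref{discrete complementary}, the right-hand side equals $(\bar\mu_{kh}\ww,\, \bar\uu_{kh}-\uu_{kh}(\tilde\q))_{I\times\Om}$. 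Testing the discrete adjoint equation \eqref{discrete adjoint} with $\vv_{kh}=\uu_{kh}(\bar\q_{kh}-\tilde\q)=\bar\uu_{kh}-\uu_{kh}(\tilde\q)$ and invoking the discrete state equation \eqref{discrete state} converts this quantity into $(\bar\q_{kh}-\tilde\q,\bar\zz_{kh})_{I\times\Om} - (\bar\uu_{kh}-\uu_d,\, \bar\uu_{kh}-\uu_{kh}(\tilde\q))_{I\times\Om}$. The first term is bounded from above via the variational inequality \eqref{discrete variational} with $\delta\q=\tilde\q$, giving $(\bar\q_{kh}-\tilde\q,\bar\zz_{kh})_{I\times\Om}\le \alpha(\bar\q_{kh},\tilde\q-\bar\q_{kh})_{I\times\Om}\le C$; the second term is bounded by Cauchy--Schwarz using the $L^2$ bounds already established. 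Hence $\|\bar\mu_{kh}\|_{L^1(I)}\le C$.

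Finally, for the $L^\infty(I;L^2(\Om))$ bound on the control I would return to the representation $\bar\q_{kh} = P_{[\q_a,\q_b]}(-\tfrac1\alpha\bar\zz_{kh})$. Applying the dual stability estimate of \Cref{rem:stability_discrete_dual} to the adjoint data $\bar\uu_{kh}-\uu_d+\bar\mu_{kh}\ww$, split as $\LtwoLtwo\hookrightarrow L^2(I;H^{-1}(\Om))$ plus $\bar\mu_{kh}\ww\in L^1(I;L^2(\Om))$ with norm $\le\|\bar\mu_{kh}\|_{L^1(I)}\|\ww\|_{L^2(\Om)}$, gives $\|\bar\zz_{kh}\|_{L^\infty(I;L^2(\Om))}\le C$ from the two preceding bounds. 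Since $P_{[\q_a,\q_b]}$ acts pointwise and satisfies $|P_{[\q_a,\q_b]}(v)|\le|v|+C$ with $C$ depending only on the finite components of $\q_a,\q_b$, taking $L^2(\Om)$ norms and the essential supremum in time yields $\|\bar\q_{kh}\|_{L^\infty(I;L^2(\Om))}\le\tfrac1\alpha\|\bar\zz_{kh}\|_{L^\infty(I;L^2(\Om))}+C\le C$. I expect the multiplier estimate to be the crux, as it is the only step that genuinely needs the uniformity of the Slater gap together with the interplay of complementarity, the adjoint duality, and the variational inequality.
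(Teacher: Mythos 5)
Your proof is correct, and its overall architecture matches the paper's: the $L^2$ bounds come from comparing objective values at the feasible Slater point, the multiplier bound from combining the uniform discrete Slater gap with complementarity, the state--adjoint duality identity and the variational inequality, and the $L^\infty(I;L^2(\Om)^d)$ control bound from the dual stability of \Cref{rem:stability_discrete_dual} applied to $\bar\uu_{kh}-\uu_d+\bar\mu_{kh}\ww$ together with the projection formula. The one place you genuinely deviate is the choice of comparison control in the multiplier step: you test the variational inequality with the Slater point $\tilde\q$ itself and use $\Gw(\uu_{kh}(\tilde\q))\le\beta-\delta/2$ directly from \Cref{lemm:discrete_Slater}, whereas the paper tests with the midpoint $\pp=\tfrac12\bar\q+\tfrac12\tilde\q$ and must then re-run the Slater-gap argument for $\pp$ (splitting $\Gw(\uu(\pp))=\tfrac12\Gw(\bar\uu)+\tfrac12\Gw(\uu(\tilde\q))$ and re-invoking the finite element error estimate to get $\Gw(\uu_{kh}(\pp))\le\beta-\tfrac{\delta}{4}$). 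Your variant is slightly leaner --- it avoids introducing the continuous optimal control $\bar\q$ into this step and needs only the already-established discrete Slater gap --- while the paper's midpoint construction is the more standard template inherited from continuous state-constrained analysis; both close the argument with the same Cauchy--Schwarz bounds on $(\bar\zz_{kh},\bar\q_{kh}-\cdot)$ and $(\bar\uu_{kh}-\uu_d,\cdot)$, so nothing is lost either way.
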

\begin{proof}
  \revB{%
    Due to the feasibility of $\tilde \q$, shown in \Cref{lemm:discrete_Slater}, it holds
  }
$$
\begin{aligned}
J(\bar{\q}_{kh},\bar{\uu}_{kh})\le J(\tilde{\q},\uu_{kh}(\tilde{\q}))
&=\frac{1}{2} \norm{\uu_{kh}(\tilde{\q}) - \uu_d}^2_{\LtwoLtwo} + \frac{\alpha}{2} \norm{\tilde{\q}}^2_{L^2(I)}
\le C(T,\tilde{\q}),
\end{aligned}
$$
\revB{%
  where due to \Cref{thm:stability_discrete_solution_stokes}, this bound is independent of $k$ and $h$.
  This results in
}
\begin{equation}\label{eq:discrete_control_state_bounded}
\|\bar{\q}_{kh}\|_{\LtwoLtwo}+\|\bar{\uu}_{kh}\|_{\LtwoLtwo}\le C.
\end{equation}
Let us define $\pp=\frac{1}{2}\bar{\q}+\frac{1}{2}\tilde{\q}$.
By definition $\pp\in \Q_{ad}$ and thus by \eqref{discrete variational} it holds
$
\left(\alpha\bar{\q}_{kh}+\bar{\zz}_{kh},  \pp-\bar{\q}_{kh}\right)_{I\times \Om}\geq 0.
$
This yields
    \begin{equation}\label{eq:discrete_optcond_expanded}
\begin{aligned}
0 &\le \alpha\left(\bar{\q}_{kh},  \pp-\bar{\q}_{kh}\right)_{I\times \Om}+\left(\bar{\zz}_{kh},  \pp-\bar{\q}_{kh}\right)_{I\times \Om}\\
&=\alpha\left(\bar{\q}_{kh},  \pp-\bar{\q}_{kh}\right)_{I\times \Om}+B(\uu_{kh}(\pp)-\bar{\uu}_{kh},\bar{\zz}_{kh})\\
&=\alpha\left(\bar{\q}_{kh},  \pp-\bar{\q}_{kh}\right)_{I\times \Om}+(\uu_{kh}(\pp)-\bar{\uu}_{kh},\bar{\uu}_{kh}-\uu_d)_{I\times \Om}+\langle \bar{\mu}_{kh},G_\ww({\uu}_{kh}(\pp))-G_\ww(\bar{\uu}_{kh}) \rangle,
\end{aligned}
\end{equation}
\revB{%
  where we can bound the first two terms by \eqref{eq:discrete_control_state_bounded} and obtain
  \begin{equation}\label{eq:discrete_stability_eq1}
    0 \le C + \langle \bar{\mu}_{kh},G_\ww({\uu}_{kh}(\pp))-G_\ww(\bar{\uu}_{kh}) \rangle.
  \end{equation}
}
\revC{For $\pp$, using \Cref{ass:khCoupling}, we can follow a similar argument as in the proof of 
\Cref{lemm:discrete_Slater}, in order to obtain}
$$
\begin{aligned}
G_{\bf w}(\uu_{kh}({\pp}))&=G_{\bf w}(\uu_{kh}({\pp})-\uu({\pp}))+G_{\bf w}(\uu({\pp}))\\
&=G_{\bf w}(\uu_{kh}({\pp})-\uu({\pp}))+\frac{1}{2}G_{\bf w}(\bar{\uu})+\frac{1}{2}G_{\bf w}(\uu(\tilde{\q}))\\
&\le \| \ww\|_{L^2(\Om)}\|\uu_{kh}({\pp})-\uu({\pp})\|_{L^\infty(I; L^2(\Om))}+\frac{1}{2}G_{\bf w}(\bar{\uu})+\frac{1}{2}G_{\bf w}(\uu(\tilde{\q}))\\
&\le\frac{1}{4}\delta+ \frac{1}{2}\beta+\frac{1}{2}\beta-\frac{1}{2}\delta =\beta-\frac{1}{4}\delta.
\end{aligned}
$$
Inserting this into \eqref{eq:discrete_stability_eq1}, yields together with $\bar \mu_{kh} \ge 0$
\revB{and the complementarity conditions \eqref{discrete complementary}}:
$$
0 \le C+\langle \bar{\mu}_{kh},\revision{\beta}-G_\ww(\bar{\uu}_{kh}) \rangle-\frac{\delta}{4}\langle \bar{\mu}_{kh},1 \rangle
=C-\frac{\delta}{4}\langle \bar{\mu}_{kh},1 \rangle.
$$
Thus, again using $\bar{\mu}_{kh}\geq 0$ and Remark \ref{rem: norm mu_kh}, results in 
\begin{equation}\label{eq:discrete_multiplier_bounded}
\|\bar{\mu}_{kh}\|_{L^1(\bar I)}=\langle \bar{\mu}_{kh},1 \rangle\le C.
\end{equation}
    Combining \eqref{eq:discrete_control_state_bounded} and \eqref{eq:discrete_multiplier_bounded}
    with \revB{\Cref{rem:stability_discrete_dual}}
    yields the boundedness of $\bar \zz_{kh}$ in $L^\infty(I;L^2(\Om)^d)$.
    By the representation $\bar \q_{kh} = P_{[\q_a,\q_b]} \left( - \frac{1}{\alpha} \bar \zz_{kh} \right)$,
    this shows $\|\bar \q_{kh}\|_{L^\infty(I;L^2(\Om))} \le C$, which concludes the proof.
\end{proof}

%\subsection{Error Estimate}
\begin{theorem}\label{thm:err_est_var_discrete_minprob}
  \revC{Let \Cref{ass:khCoupling} hold and let $h$ be sufficiently small.}
  Let $(\bar \q,\bar \uu)$ and $(\bar \q_{kh},\bar \uu_{kh})$ be the unique solutions to the continuous and 
  variationally discretized optimal control problems
  \eqref{eq:optimal_problem} -\eqref{eq:state_constraint} and
  \eqref{eq:discrete_optimal}-\eqref{eq:discrete_constraint} respectively.
  Then there exists a constant $C>0$, such that it holds
\[
\sqrt{\alpha}\norm{\bar{\q} - \bar{\q}_{kh}}_{\LtwoLtwo}+\norm{\bar{\uu} - \bar{\uu}_{kh}}_{\LtwoLtwo} \le C \, \ell_k (k^{\frac{1}{2}}+h) , \quad \ell_k = \lk.
\]
\end{theorem}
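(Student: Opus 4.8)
The plan is to exploit the two variational inequalities as the engine of the estimate. Testing the continuous optimality condition \eqref{eq: Variational inequality} with $\delta\q=\bar\q_{kh}$ and the discrete one \eqref{discrete variational} with $\delta\q=\bar\q$, and adding the resulting inequalities, the terms carrying $\alpha$ combine into the coercive quantity and one is left with
\[
\alpha\norm{\bar\q-\bar\q_{kh}}_{\LtwoLtwo}^2 \le \IOprod{\bar\zz-\bar\zz_{kh},\,\bar\q_{kh}-\bar\q}.
\]
Thus the whole problem reduces to estimating this mixed duality term, and the $\sqrt{k+h^2}\le k^{1/2}+h$ structure of the final rate will emerge from taking a square root of a quadratic bound on the right-hand side.

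To process the right-hand side I introduce the two mixed states $\uu(\bar\q_{kh})$ (the continuous state driven by the discrete control) and $\uu_{kh}(\bar\q)$ (the discrete state driven by the continuous control). Testing the weak continuous adjoint equation \eqref{eq:weak_formulation_adjoint} with $e_1:=\uu(\bar\q_{kh})-\bar\uu\in\X$, and the discrete adjoint equation \eqref{discrete adjoint} with $e_2:=\bar\uu_{kh}-\uu_{kh}(\bar\q)\in X^0_k(\Vh)$, and eliminating the left-hand sides through the respective state equations (legitimate since $\bar\zz\in\Y^*$ is an admissible test function in \eqref{eq:weak_state_timedept_testfunctions}), I obtain
\[
\IOprod{\bar\zz-\bar\zz_{kh},\bar\q_{kh}-\bar\q}
= \big[\IOprod{\bar\uu-\uu_d,e_1}-\IOprod{\bar\uu_{kh}-\uu_d,e_2}\big]
+\big[\langle\bar\mu,\Gw(e_1)\rangle-\langle\bar\mu_{kh},\Gw(e_2)\rangle\big],
\]
splitting the term into a tracking contribution and a multiplier contribution.

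The multiplier contribution is the heart of the argument and the step I expect to be the \emph{main obstacle}. Using the continuous complementarity in \eqref{eq: Complementary conditions} and the discrete one in \eqref{discrete complementary} to substitute $\Gw(\bar\uu)=\beta$ and $\Gw(\bar\uu_{kh})=\beta$ inside the respective pairings, then inserting the discretely feasible $\bar\uu_{kh}$ (resp. the continuously feasible $\bar\uu$) and exploiting $\bar\mu,\bar\mu_{kh}\ge0$ together with the feasibility inequalities, the indefinite multiplier term is bounded from above by
\[
\langle\bar\mu,\Gw(\uu(\bar\q_{kh})-\uu_{kh}(\bar\q_{kh}))\rangle
+\langle\bar\mu_{kh},\Gw(\uu_{kh}(\bar\q)-\uu(\bar\q))\rangle,
\]
that is, purely by the finite element error of the state equation tested against $\ww$. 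Since $\bar\mu$ is merely a measure and $\bar\mu_{kh}\in L^1(I)$, these pairings must be controlled in the $L^\infty(I;L^2(\Om))$ norm, so I invoke Corollary~\ref{cor: error estimate for convex} to bound each state error by $C\ell_k^2(k+h^2)$ times the $L^\infty(I;L^2(\Om))$ norm of the driving control; those norms are uniformly bounded via Theorem~\ref{thm: regularity optimal} and the discrete stability Lemma~\ref{lem: fully discrete stability}, which also bounds $\norm{\bar\mu}_{\mathcal M(\bar I)}$ and $\norm{\bar\mu_{kh}}_{L^1(I)}$. The delicate point is to give rigorous meaning to the pairing of the continuous measure $\bar\mu$ with the piecewise-constant-in-time quantity $\Gw(\bar\uu_{kh})$ and to justify $\langle\bar\mu,\Gw(\bar\uu_{kh})-\beta\rangle\le0$; this is settled through the $\NBV$/Riemann--Stieltjes representation of $\bar\mu$ already used in \Cref{thm:adjoint_measure_regularity} together with the interval-wise feasibility in \eqref{discrete complementary}.

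For the tracking contribution I rewrite it as $\IOprod{\bar\uu-\uu_d,e_1-e_2}+\IOprod{\bar\uu-\bar\uu_{kh},e_2}$; here $e_1-e_2$ is a difference of two state discretization errors and is of order $k+h^2$ in $\LtwoLtwo$ by Corollary~\ref{cor: error estimate for convex L2}, while writing $\bar\uu-\bar\uu_{kh}=(\uu(\bar\q)-\uu_{kh}(\bar\q))-e_2$ turns the second summand into a discretization error plus the sign-favourable term $-\norm{e_2}_{\LtwoLtwo}^2$. Collecting all contributions yields
\[
\alpha\norm{\bar\q-\bar\q_{kh}}_{\LtwoLtwo}^2 \le C\,\ell_k^2\,(k+h^2),
\]
and taking square roots, together with $\sqrt{k+h^2}\le k^{1/2}+h$, gives the asserted control estimate. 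Finally the state estimate follows from the splitting $\bar\uu-\bar\uu_{kh}=(\uu(\bar\q)-\uu_{kh}(\bar\q))+S_{kh}(\bar\q-\bar\q_{kh})$, bounding the first term by Corollary~\ref{cor: error estimate for convex L2} and the second by the discrete stability of Theorem~\ref{thm:stability_discrete_solution_stokes} applied to the control error just established.
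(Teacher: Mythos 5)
Your proposal is correct and follows essentially the same route as the paper's proof: adding the two tested variational inequalities, splitting the resulting cross term via the mixed states $\uu(\bar\q_{kh})$ and $\uu_{kh}(\bar\q)$, eliminating the multiplier contributions through complementarity, feasibility and nonnegativity of $\bar\mu$, $\bar\mu_{kh}$, and then invoking \Cref{cor: error estimate for convex} in $L^\infty(I;L^2(\Om)^d)$ and \Cref{cor: error estimate for convex L2} in $\LtwoLtwo$ together with the uniform bounds of \Cref{thm: regularity optimal} and \Cref{lem: fully discrete stability}. The only cosmetic differences are that the paper channels the multiplier estimate through the cut-off $P_\beta$ of \eqref{eq:projection_stateconstraint} so that $\bar\mu$ is only ever paired with continuous functions (thereby sidestepping the measurability point you correctly flag, which is otherwise resolved exactly as you suggest via the Riesz/NBV representation of the nonnegative functional $\bar\mu$), and that it extracts $-\norm{\bar\uu-\bar\uu_{kh}}^2_{\LtwoLtwo}$ directly from the recombined tracking terms rather than deriving the state estimate afterwards by a triangle inequality.
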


\begin{proof}
Choosing $\delta\q=\bar{\q}_{kh}$ in \eqref{eq: Variational inequality} and $\delta\q=\bar{\q}$ in \eqref{discrete variational}, results in
\begin{equation}
\left(\alpha\bar{\q}+\bar{\zz}, \bar{\q}_{kh}-\bar{\q}\right)_{I\times \Om}\geq 0\quad \text{and}\quad \left(\alpha\bar{\q}_{kh}+\bar{\zz}_{kh}, \bar{\q}-\bar{\q}_{kh}\right)_{I\times \Om}\geq 0.
\end{equation}
Adding these two inequalities results in 
\begin{equation}
\alpha\|\bar{\q}_{kh}-\bar{\q}\|^2_{\LtwoLtwo}\le (\bar{\zz},\bar{\q}_{kh}-\bar{\q})_{I\times \Om}+ (\bar{\zz}_{kh},\bar{\q}-\bar{\q}_{kh})_{I\times \Om}:=I_1+I_2.
\end{equation} 
We estimate the two terms separately. 

{\bf{Estimate for $I_1$.}}
\revB{%
Using the weak formulations \eqref{eq:weak_state} and \eqref{eq:weak_formulation_adjoint} of the 
continuous state and adjoint equations \eqref{eq: optimal state} \& \eqref{eq: adjoint state}, respectively,
we have
}
$$
\begin{aligned}
  I_1 &= \IOpair{\pa_t (\uu(\bar{\q}_{kh})-\bar{\uu}),\bar \zz}
  +\IOprod{\na (\uu(\bar{\q}_{kh})-\bar{\uu}),\na \bar \zz}\\
&= (\bar{\uu}-\uu_d,\uu(\bar{\q}_{kh})-\bar{\uu})_{I \times \Omega}+\langle G_{\bf w}(\uu(\bar{\q}_{kh})-\bar{\uu}),\bar{\mu}\rangle.
\end{aligned}
$$
Introducing the pointwise projection onto the state constraint
\begin{equation}\label{eq:projection_stateconstraint}
P_\beta v= P_{\beta}v(t) := \max\{v(t),\beta\},\quad \forall t\in \bar{I},
\end{equation}
the last term  can be estimated as 
$$
\begin{aligned}
\langle G_{\bf w}(\uu(\bar{\q}_{kh})-\bar{\uu}),\bar{\mu}\rangle 
&=\langle G_{\bf w}(\uu(\bar{\q}_{kh}))-P_{\beta}G_{\bf w}(\uu(\bar{\q}_{kh})),\bar{\mu}\rangle
+\langle P_{\beta}G_{\bf w}(\uu(\bar{\q}_{kh}))-G_{\bf w}(\bar{\uu}),\bar{\mu}\rangle\\
&\le \langle G_{\bf w}(\uu(\bar{\q}_{kh}))-P_{\beta}G_{\bf w}(\uu(\bar{\q}_{kh})),\bar{\mu}\rangle
+\langle {\beta} -G_{\bf w}(\bar{\uu}),\bar{\mu}\rangle,
%&= \langle G_{\bf w}(\uu(\bar{\q}_{kh}))-P_bG_{\bf w}(\uu(\bar{\q}_{kh})),\bar{\mu}\rangle,
\end{aligned}
$$
\revB{%
  where we have used, that due to $\bar \mu \ge 0$, it holds 
  $\langle P_{\beta} \Gw(\uu(\bar \q_{kh})),\bar \mu\rangle \le \langle {\beta},\bar \mu\rangle$.
  Using the \revC{complementarity} condition \eqref{eq: Complementary conditions}, it holds 
  $\langle {\beta} -G_{\bf w}(\bar{\uu}),\bar{\mu}\rangle = 0$, hence we have 
  \begin{equation*}
    \langle G_{\bf w}(\uu(\bar{\q}_{kh})-\bar{\uu}),\bar{\mu}\rangle 
    \le \langle G_{\bf w}(\uu(\bar{\q}_{kh}))-P_{\beta}G_{\bf w}(\uu(\bar{\q}_{kh})),\bar{\mu}\rangle.
  \end{equation*}
}
Now using that 
\begin{equation}\label{eq: Lipschitz Pb}
|P_{\beta}v-P_{\beta}u|\le |v-u|,
\end{equation}
by the triangle inequality and using that $G_{\bf w}(\bar{\uu}_{kh})\le {\beta}$, we obtain
$$
\begin{aligned}
    \langle G_{\bf w}(\uu(\bar{\q}_{kh}))-P_{\beta}G_{\bf w}(\uu(\bar{\q}_{kh})),\bar{\mu}\rangle \le & 
\| G_{\bf w}(\uu(\bar{\q}_{kh}))-P_{\beta}G_{\bf w}(\uu(\bar{\q}_{kh})\|_{L^\infty(I)}\|\bar{\mu}\|_{C(\bar{I})^*}\\
\le &
\| G_{\bf w}(\uu(\bar{\q}_{kh}))-G_{\bf w}(\bar{\uu}_{kh})\|_{L^\infty(I)}\|\bar{\mu}\|_{C(\bar{I})^*}\\
&+\| P_{\beta} G_{\bf w}(\bar{\uu}_{kh})-P_{\beta}G_{\bf w}(\uu(\bar{\q}_{kh})\|_{L^\infty(I)}\|\bar{\mu}\|_{C(\bar{I})^*}\\
\le & 2\|{\bf w}\|_{L^2(\Om)}\| \revision{\uu_{kh}}(\bar{\q}_{kh})-\uu(\bar{\q}_{kh})\|_{L^\infty(I ;L^2(\Omega))}\|\bar{\mu}\|_{C(\bar{I})^*}\\
\le & C \ell_k^2(h^2+k)\|\bar{\q}_{kh}\|_{L^\infty(I;L^2(\Omega))}\|{\bf w}\|_{L^2(\Om)}\|\bar{\mu}\|_{C(\bar{I})^*}\\
\le & C \ell_k^2(h^2+k),
\end{aligned}
$$
where in the last two steps we used Corollary \ref{cor: error estimate for convex} and Lemma \ref{lem: fully discrete stability}.
Thus,
$$
I_1 \le C \ell_k^2(h^2+k)+(\bar{\uu}-\uu_d,\uu(\bar{\q}_{kh})-\bar{\uu})_{I \times \Omega}.
$$
{\bf{Estimate for $I_2$.}}
Similarly, using the fully discrete state and adjoint equations \eqref{discrete state} and \eqref{discrete adjoint}, respectively, we have
$$
\begin{aligned}
I_2 &= B(\uu_{kh}(\bar{\q})-\bar{\uu}_{kh},\bar{\zz}_{kh})\\
&= (\bar{\uu}_{kh}-\uu_d,\uu_{kh}(\bar{\q})-\bar{\uu}_{kh})_{I \times \Omega}+\langle G_{\bf w}(\uu_{kh}(\bar{\q})-\bar{\uu}_{kh}),\bar{\mu}_{kh}\rangle.
\end{aligned}
$$
Using the projection $P_{\beta}$ defined in \eqref{eq:projection_stateconstraint},
the last term in $I_2$ can be estimated as 
$$
\begin{aligned}
\langle G_{\bf w}(\uu_{kh}(\bar{\q})-\bar{\uu}_{kh}),\bar{\mu}\rangle 
&=\langle G_{\bf w}(\uu_{kh}(\bar{\q}))-P_{\beta}G_{\bf w}(\uu_{kh}(\bar{\q})),\bar{\mu}_{kh}\rangle
+\langle P_{\beta}G_{\bf w}(\uu_{kh}(\bar{\q}))-G_{\bf w}(\bar{\uu}_{kh}),\bar{\mu}_{kh}\rangle\\
&\le \langle G_{\bf w}(\uu_{kh}(\bar{\q}))-P_{\beta}G_{\bf w}(\uu_{kh}(\bar{\q})),\bar{\mu}_{kh}\rangle+\langle {\beta} -G_{\bf w}(\bar{\uu}_{kh}),\bar{\mu}_{kh}\rangle,
%&= \langle G_{\bf w}(\uu_{kh}(\bar{\q}))-P_bG_{\bf w}(\uu_{kh}(\bar{\q})),\bar{\mu}_{kh}\rangle,
\end{aligned}
$$
  where we have used, that due to $\bar \mu_{kh} \ge 0$, it holds 
  $\langle P_{\beta} \Gw(\uu_{kh}(\bar \q)),\bar \mu_{kh}\rangle \le \langle {\beta},\bar \mu_{kh}\rangle$.
  Using the \revC{complementarity} condition \eqref{discrete complementary}, it holds 
  $\langle {\beta} - \Gw(\bar{\uu}_{kh}),\bar{\mu}_{kh}\rangle = 0$, hence we have 
  \begin{equation*}
    \langle G_{\bf w}(\uu_{kh}(\bar{\q})-\bar{\uu}_{kh}),\bar{\mu}\rangle 
    \le \langle G_{\bf w}(\uu_{kh}(\bar{\q}))-P_{\beta}G_{\bf w}(\uu_{kh}(\bar{\q})),\bar{\mu}_{kh}\rangle.
  \end{equation*}
Using \eqref{eq: Lipschitz Pb}, the triangle inequality and using that $G_{\bf w}(\bar{\uu}_{kh})\le {\beta}$,
we obtain
$$
\begin{aligned}
 \langle G_{\bf w}(\uu_{kh}(\bar{\q}))-P_{\beta}G_{\bf w}(\uu_{kh}(\bar{\q}),\bar{\mu}_{kh}\rangle \le & 
\| G_{\bf w}(\uu_{kh}(\bar{\q}))-P_{\beta}G_{\bf w}(\uu_{kh}(\bar{\q})\|_{L^\infty(I)}\|\bar{\mu}_{kh}\|_{L^1(\bar I)}\\
\le &
\| G_{\bf w}(\uu_{kh}(\bar{\q}))-G_{\bf w}(\bar{\uu})\|_{L^\infty(I)}\|\bar{\mu}_{kh}\|_{L^1(\bar I)}\\
&+\| P_{\beta} G_{\bf w}(\bar{\uu})-P_{\beta}G_{\bf w}(\uu_{kh}(\bar{\q})\|_{L^\infty(I)}\|\bar{\mu}_{kh}\|_{L^1(\bar I)}\\
\le & 2\|{\bf w}\|_{L^2(\Om)}\| \revision{\uu}(\bar{\q})-(\uu_{kh}(\bar{\q})\|_{L^\infty(I ;L^2(\Omega))}\|\bar{\mu}_{kh}\|_{L^1(\bar I)}\\
\le & C \ell_k^2(h^2+k)\|\bar{\q}\|_{L^\infty(I;L^2(\Omega))}\|{\bf w}\|_{L^2(\Om)}\|\bar{\mu}_{kh}\|_{L^1(\bar I)}\\
\le & C \ell_k^2(h^2+k),
\end{aligned}
$$
where in the last two steps we used Corollary \ref{cor: error estimate for convex} and Lemma \ref{lem: fully discrete stability}.
Thus,
$$
I_2 \le C \ell_k^2(h^2+k)+(\bar{\uu}_{kh}-\uu_d,\uu_{kh}(\bar{\q})-\bar{\uu}_{kh})_{I \times \Omega}.
$$
Combining the estimates for $I_1$ and $I_2$ and using that
$$
\begin{aligned}
(\bar{\uu}-\uu_d,\uu(\bar{\q}_{kh})&-\bar{\uu})_{I \times \Omega}+(\bar{\uu}_{kh}-\uu_d,\uu_{kh}(\bar{\q})-\bar{\uu}_{kh})_{I \times \Omega}\\
=&(\bar{\uu}-\uu_d,\bar{\uu}_{kh}-\bar{\uu})_{I \times \Omega}+(\bar{\uu}-\uu_d,\uu(\bar{\q}_{kh})-\bar{\uu}_{kh})_{I \times \Omega}\\
&+(\bar{\uu}_{kh}-\uu_d,\uu_{kh}(\bar{\q})-\bar{\uu})_{I \times \Omega}+(\bar{\uu}_{kh}-\uu_d,\bar{\uu}-\bar{\uu}_{kh})_{I \times \Omega}\\
=&-\norm{\bar{\uu} - \bar{\uu}_{kh}}^2_{\LtwoLtwo}+(\bar{\uu}-\uu_d,\uu(\bar{\q}_{kh})-\bar{\uu}_{kh})_{I \times \Omega}+(\bar{\uu}_{kh}-\uu_d,\uu_{kh}(\bar{\q})-\bar{\uu})_{I \times \Omega}
\end{aligned}
$$
by using Corollary \ref{cor: error estimate for convex L2}, we obtain
$$
\begin{aligned}
\alpha & \norm{\bar{\q} - \bar{\q}_{kh}}^2_{\LtwoLtwo}+\norm{\bar{\uu} - \bar{\uu}_{kh}}^2_{\LtwoLtwo} \\
&\le  C\ell^2_k (k+h^2)+(\bar{\uu}-\uu_d,\uu(\bar{\q}_{kh})-\bar{\uu}_{kh})_{I \times \Omega}+(\bar{\uu}_{kh}-\uu_d,\uu_{kh}(\bar{\q})-\bar{\uu})_{I \times \Omega}\\
&\le C\ell^2_k (k+h^2)+\|\bar{\uu}-\uu_d\|_{\LtwoLtwo}\|\uu(\bar{\q}_{kh})-\revision{\uu_{kh}}(\bar{\q}_{kh})\|_{\LtwoLtwo}\\
&\quad +\|\bar{\uu}_{kh}-\uu_d\|_{\LtwoLtwo}\|\uu_{kh}(\bar{\q})-\revision{\uu}(\bar{\q})\|_{\LtwoLtwo}\\
&\le C\ell^2_k (k+h^2)\left[1+\left(\|\bar{\uu}\|_{\LtwoLtwo}+\|\bar{\uu}_{kh}\|_{\LtwoLtwo}+\|\uu_d\|_{\LtwoLtwo}\right)\left(\|\bar{\q}_{kh}\|_{\LtwoLtwo}+\|\bar{\q}\|_{\LtwoLtwo}\right)\right]\\
&\le C\ell^2_k (k+h^2),
\end{aligned}
$$
where in the last step, we used the boundedness of $\|\bar{\uu}_{kh}\|_{\LtwoLtwo}$, $\|\bar{\uu}\|_{\LtwoLtwo}$, $\|\bar{\q}\|_{\LtwoLtwo}$, and $\|\bar{\q}_{kh}\|_{\LtwoLtwo}$ from Theorem \ref{thm: regularity optimal} and Lemma \ref{lem: fully discrete stability}. 
\end{proof}

\section{Full Discretization of the Optimal Control Problem}\label{sec:full_disc_opt_control}

We discretize the control by piecewise constant functions on the same partition as the fully discrete  approximation of the state and adjoint variables. We set
\begin{equation}\label{eq: Q_0}
\Q_0 = \{ \q\in L^2(I;L^2(\Omega)^d):\ \q \mid_{I_m\times K}\in \mathbb{P}_0(I_m; \mathbb{P}_0(K)^d), \ m=1,2,\dots,M,  \ K\in \Th \}.
\end{equation}
We also define the corresponding admissible set
$$
\Q_{0,ad}:= \Q_0\cap \Q_{ad}.
$$
We introduce the projection $\pi_d\colon L^2(I;L^2(\Om)^d) \to \Q_0$, defined by
\begin{equation}\label{eq:discrete_control_projection}
    \IOprod{\q - \pi_d \q,\mathbf{r}} = 0 \qquad \forall \mathbf{r} \in \Q_0,
\end{equation}
\revision{which by definition is stable in $\LtwoLtwo$, i.e., satisfies
  \begin{equation}\label{eq:ltwo_stability_projection}
    \|\pi_d \q\|_{\LtwoLtwo} \le \|\q\|_{\LtwoLtwo}.
  \end{equation}
}
Note that this projection satisfies the explicit formula
\begin{equation*}
  \pi_d \q|_{I_m \times K} = \frac{1}{k_m |K|}\int_{I_m \times K} \q(t,x) \ d(t,x)
  \qquad \text{for all } m=1,...,M, \ K \in \mathcal T_h.
\end{equation*}
Hence it is straightforward to check, that this $L^2$ projection onto piecewise constants is 
stable in $L^\infty(I \times \Om)^d$ and $L^\infty(I;L^2(\Omega)^d)$ and there holds
\begin{equation}\label{eq:linfty_stability_projection}
  \|\pi_d \q\|_{L^\infty(I \times \Om)} \le \|\q\|_{L^\infty(I \times \Om)} \quad \text{and} \quad
  \|\pi_d \q\|_{L^\infty(I;L^2(\Om))} \le \|\q\|_{L^\infty(I;L^2(\Om))}.
\end{equation}
Further, we have $\pi_d (\Q_{ad}) \subset \Q_{0,ad}$.
We can now formulate the fully discrete optimal control problem, which reads
\begin{subequations}
  \begin{equation}\label{eq:minprob_fully_discrete}
\text{Minimize }\; J(\q_{\sigma},\uu_{\sigma}) = \frac{1}{2} \norm{\uu_{\sigma} - \uu_d}^2_{\LtwoLtwo} + \frac{\alpha}{2} \norm{\q_{\sigma}}^2_{\LtwoLtwo}
  \end{equation}
subject to $(\q_{\sigma},\uu_{\sigma}) \in \Q_{0,ad}\times X^0_{k}(\Vh)$, satisfying 
\begin{equation}\label{eq:stateeq_fully_discrete}
B(\uu_{\sigma},\vv_{kh}) = ( \q_{\sigma}, \vv_{kh})_{I\times\Om} \quad \text{for all}\quad \vv_{kh} \in X^0_{k}(\Vh)
\end{equation}
 % $\q_a\le \q_{\sigma} \le \q_b$
 and
%  $$
%  \int_\Omega \uu_{\sigma}(t,x)\cdot {\bf w}(x) \ dx \le b(t) \quad \text{for almost all }\; t \in I,
% $$
%which we can also express as 
 \begin{equation}\label{eq:stateconstraint_fully_discrete}
  G_\ww(\uu_{\sigma})\mid_{I_{m}}\le \beta\quad\text{for}\quad m=1,2,\dots,M.
\end{equation}
 \end{subequations}

  The following lemma guarantees that also for the fully discrete optimal control problem, there exist
  feasible controls such that the associated fully discrete state strictly satisfies the state constraint.
 \begin{lemma}\label{lemm:fully_discrete_Slater}
   \revC{Let \Cref{ass:khCoupling} be satisfied and let $h$ be sufficiently small. Then}
   %There exist $k_0,h_0 > 0$ such that for any $h\le h_0$, $k \le k_0$, 
   the projection 
   \revC{$\pi_d \tilde \q \in \Q_{0,ad}$} of the 
  Slater point $\tilde \q \in \Q_{ad}$ from \Cref{Slater} satisfies the following discrete Slater condition
  \begin{equation*}
    \Gw(\uu_{kh}(\pi_d \tilde q)) < \beta \revC{\qquad \text{for all } t \in \bar I}.
  \end{equation*}
\end{lemma}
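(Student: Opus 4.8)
The plan is to mirror the proof of \Cref{lemm:discrete_Slater}, the only new ingredient being the additional error caused by replacing $\tilde\q$ by its $L^2$-projection $\pi_d\tilde\q$ onto the piecewise constant controls. By \Cref{Slater} there is a $\delta>0$ with $\Gw(\uu(\tilde\q))\le\beta-\delta$ for all $t\in\bar I$. Since $\Gw$ is linear and bounded by $\|\ww\|_{L^2(\Om)}$ on $L^2(\Om)^d$, I would write, pointwise in $t$,
\[
\Gw(\uu_{kh}(\pi_d\tilde\q)) = \Gw(\uu(\tilde\q)) + \Gw\bigl(\uu_{kh}(\pi_d\tilde\q)-\uu(\tilde\q)\bigr) \le \beta-\delta + \|\ww\|_{L^2(\Om)}\,\|\uu_{kh}(\pi_d\tilde\q)-\uu(\tilde\q)\|_{L^\infty(I;L^2(\Om))}.
\]
Thus the claim reduces to showing that the velocity error $\|\uu_{kh}(\pi_d\tilde\q)-\uu(\tilde\q)\|_{L^\infty(I;L^2(\Om))}$ becomes strictly smaller than $\delta/\|\ww\|_{L^2(\Om)}$ once $h$ is small and $k$ obeys \Cref{ass:khCoupling}.

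To control this error I would insert the intermediate discrete state $\uu_{kh}(\tilde\q)$ and split by the triangle inequality, using linearity of the discrete solution operator $S_{kh}$, into
\[
\|\uu_{kh}(\pi_d\tilde\q)-\uu(\tilde\q)\|_{L^\infty(I;L^2(\Om))} \le \|\uu_{kh}(\pi_d\tilde\q-\tilde\q)\|_{L^\infty(I;L^2(\Om))} + \|\uu_{kh}(\tilde\q)-\uu(\tilde\q)\|_{L^\infty(I;L^2(\Om))}.
\]
The second summand is precisely the quantity already shown to be arbitrarily small in the proof of \Cref{lemm:discrete_Slater} (via a density argument combined with \Cref{cor: error estimate for convex} and the continuous and discrete stability estimates), so for $h$ small and $k$ satisfying \Cref{ass:khCoupling} it is bounded by $\delta/(2\|\ww\|_{L^2(\Om)})$.

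For the genuinely new term I would invoke the discrete stability estimate of \Cref{thm:stability_discrete_solution_stokes}, which gives $\|\uu_{kh}(\pi_d\tilde\q-\tilde\q)\|_{L^\infty(I;L^2(\Om))}\le C\|\pi_d\tilde\q-\tilde\q\|_{L^2(I;H^{-1}(\Om))+L^1(I;L^2(\Om))}$. Using $L^2(\Om)\hookrightarrow H^{-1}(\Om)$ this is bounded by $C\|\pi_d\tilde\q-\tilde\q\|_{\LtwoLtwo}$, which tends to zero as $h,k\to0$ because $\pi_d$ is the $L^2$-orthogonal projection onto piecewise constants and these spaces are dense in $\LtwoLtwo$; in particular this term falls below $\delta/(2\|\ww\|_{L^2(\Om)})$ for the discretization fine enough. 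Inserting the two bounds into the first display then yields $\Gw(\uu_{kh}(\pi_d\tilde\q))<\beta$ on all of $\bar I$.

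The main obstacle is the projection-consistency term: one has to absorb $\pi_d\tilde\q-\tilde\q$ into a norm that the discrete stability estimate actually controls. The point that makes this painless is that \Cref{thm:stability_discrete_solution_stokes} admits an $H^{-1}$-in-space right-hand side, so plain $L^2$-convergence of $\pi_d$ — which holds for the rough Slater point $\tilde\q\in\Q_{ad}$ without any extra spatial or temporal regularity — already suffices, and no mollification of $\tilde\q$ is needed for this contribution (in contrast to the piece inherited from \Cref{lemm:discrete_Slater}).
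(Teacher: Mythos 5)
Your proposal is correct and follows essentially the same route as the paper: both reduce the claim to the already-established discrete Slater margin for $\tilde\q$ plus the new consistency term $\uu_{kh}(\pi_d\tilde\q-\tilde\q)$, which is controlled via the linearity and $L^2(I;L^2(\Om)^d)\to L^\infty(I;L^2(\Om)^d)$ stability of $S_{kh}$ together with $\|\pi_d\tilde\q-\tilde\q\|_{\LtwoLtwo}\to 0$. The only cosmetic difference is that the paper cites the margin $\Gw(\uu_{kh}(\tilde\q))<\beta-\delta$ directly from the proof of \Cref{lemm:discrete_Slater}, whereas you re-derive it from the continuous Slater condition; the substance is identical.
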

\begin{proof}
  From \Cref{lemm:discrete_Slater}, we know, that there exists $\delta > 0$ such that 
  $\Gw(\uu_{kh}(\tilde q)) < \beta - \delta$. As the discrete solution operator $S_{kh}$ is linear
  and continuous from $L^2(I;L^2(\Om)^d) \to L^\infty(I;L^2(\Om)^d)$, we have
  \begin{equation*}
    \Gw(\uu_{kh}(\pi_d \tilde \q)) = 
    \Gw(\uu_{kh}(\pi_d \tilde \q - \tilde \q))
    + \Gw(\uu_{kh}(\tilde \q))
    < C\|\pi_d \tilde \q - \tilde \q\|_{\LtwoLtwo} + \beta -\delta.
  \end{equation*}
  As \revC{\Cref{ass:khCoupling} guarantees $k \to 0$ as $h \to 0$, it holds}
  $\|\pi_d \tilde \q - \tilde \q\|_{L^2(I; L^2(\Om))} \to 0$ for $h \to 0$. This implies that for 
  $h$ small enough, we have $C\|\pi_d \tilde \q - \tilde \q\|_{\LtwoLtwo} \le \delta$
  and as a consequence $\Gw(\uu_{kh}(\pi_d \tilde \q)) < \beta$.
\end{proof}

\begin{theorem}
  \revC{Let \Cref{ass:khCoupling} be satisfied and let $h$ be sufficiently small. Then }
    there exists a unique solution $(\bar \q_\sigma, \bar \uu_\sigma)$ to the fully discrete optimal control
    problem \eqref{eq:minprob_fully_discrete}-\eqref{eq:stateconstraint_fully_discrete}.
\end{theorem}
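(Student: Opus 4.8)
The plan is to mirror the existence-and-uniqueness argument of \Cref{thm:existence_optimal_control} from the continuous level, exploiting the crucial simplification that here both the control space $\Q_0$ and the discrete state space $X^0_k(\Vh)$ are finite-dimensional. First I would invoke \Cref{lemm:fully_discrete_Slater}, which under \Cref{ass:khCoupling} and for sufficiently small $h$ guarantees that the projected Slater point $\pi_d \tilde \q \in \Q_{0,ad}$ yields a discrete state strictly satisfying the state constraint \eqref{eq:stateconstraint_fully_discrete}. In particular the feasible set is nonempty, so the infimum of the objective over feasible pairs is finite and, being a sum of squared norms, bounded below by zero.

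For existence, I would take a minimizing sequence $\{\q_\sigma^n\} \subset \Q_{0,ad}$ of feasible controls with associated states $\uu_\sigma^n = S_{kh}(\q_\sigma^n)$. Since the objective contains $\tfrac{\alpha}{2}\|\q_\sigma\|^2_{\LtwoLtwo}$ with $\alpha > 0$, these controls are uniformly bounded in $\LtwoLtwo$; as $\Q_0$ is finite-dimensional, bounded sequences are precompact and we may extract a subsequence converging strongly to some $\hat \q_\sigma \in \Q_0$. Because $\Q_{0,ad}$ is closed it holds $\hat \q_\sigma \in \Q_{0,ad}$, and continuity of $S_{kh}$ from \Cref{thm:stability_discrete_solution_stokes} gives $\uu_\sigma^n \to \hat \uu_\sigma = S_{kh}(\hat \q_\sigma)$. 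The finitely many linear constraints $\Gw(\uu_\sigma^n)\mid_{I_m} \le \beta$ then pass to the limit, so $\hat \uu_\sigma$ remains feasible, and lower semicontinuity of the objective shows that $(\bar \q_\sigma, \bar \uu_\sigma) := (\hat \q_\sigma, \hat \uu_\sigma)$ is a minimizer.

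For uniqueness, I would pass to the reduced functional $j_\sigma(\q_\sigma) := J(\q_\sigma, S_{kh}(\q_\sigma))$: linearity of the discrete state equation \eqref{eq:stateeq_fully_discrete} makes $\uu_\sigma$ an affine function of $\q_\sigma$, so that together with the strictly convex term $\tfrac{\alpha}{2}\|\q_\sigma\|^2_{\LtwoLtwo}$ the functional $j_\sigma$ is strictly convex on the convex feasible set, forcing the minimizer to be unique. I expect the only genuine obstacle to be the nonemptiness of the feasible set, which is exactly what \Cref{lemm:fully_discrete_Slater} supplies; the remaining steps are routine once finite-dimensionality is used to replace the weak-compactness and compact-embedding arguments needed on the continuous level.
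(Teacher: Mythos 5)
Your proposal is correct and matches the paper's argument: the paper likewise invokes \Cref{lemm:fully_discrete_Slater} to obtain a feasible point and then states that existence and uniqueness follow by the same minimizing-sequence and strict-convexity argument as in \Cref{thm:existence_optimal_control}. Your observation that finite-dimensionality of $\Q_0$ and $X^0_k(\Vh)$ lets strong compactness replace the weak-compactness and compact-embedding steps is a valid (and slightly more explicit) rendering of the same route.
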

\begin{proof}
  As \Cref{lemm:fully_discrete_Slater} shows feasibility of $(\pi_d \tilde \q,\uu_{kh}(\pi_d \tilde \q))$
  \revC{under the given assumptions,}
  the existence proof follows the same steps as the one of \Cref{thm:existence_optimal_control}
  on the continuous level.
\end{proof}

Similar to \Cref{sec:var_disc_opt_control}, we can rewrite the problem \eqref{eq:minprob_fully_discrete}-\eqref{eq:stateconstraint_fully_discrete} in the reduced form
\begin{equation}\label{eq:fully_discrete_optimal_control_reduced}
\text{Minimize }\ j_{kh}(\q_\sigma)=J(\q_\sigma,S_{kh}(\q_\sigma)) \quad \text{over} \quad \q_\sigma\in \Q_{0,ad}\quad 
\text{subject to}\quad 
\mathcal{G}_{kh}(\q_\sigma)\in \mathcal{K}_{kh}.
\end{equation}
Note that compared to the the variationally discretized optimal control problem
\eqref{discrete optimal problem reduced}, only the control space has changed.

%\subsubsection{Fully discrete optimality system} \

\begin{theorem}[First order optimality conditions for discretized controls]
  \label{fully fully discrete first order optimality system}
A control $\bar{\q}_{\sigma}\in \Q_{0,ad}$ and the associated state
$\bar{\uu}_{\sigma}=\uu_{kh}(\bar{\q}_{\sigma}) \in X^0_{k}(\Vh)$ is \revC{the} optimal  
solution to the problem \eqref{eq:minprob_fully_discrete}-\eqref{eq:stateconstraint_fully_discrete} 
if and only if 
there exists an adjoint state $\bar{\zz}_{\sigma}  \in X^0_{k}(\Vh)$ and a Lagrange multiplier 
$\bar{\mu}_{\sigma}\in \revB{L^1(I)}$ that satisfy:\\
\begin{subequations}
{Discrete state equation}
\begin{equation}\label{discrete state sigma}
B(\bar{\uu}_{\sigma},\vv_{kh}) = ( \bar{\q}_{\sigma}, \vv_{kh})_{I\times\Om}, \quad \forall \vv_{kh} \in X^0_{k}(\Vh);
\end{equation}
{Discrete state constraint and complementarity conditions}
\begin{equation}\label{discrete complementary sigma}
  \revB{\Gw(\bar \uu_\sigma)|_{I_m} \le \beta, \ m=1,...,M, \quad }
  \bar{\mu}_{\sigma}\geq 0 
  \quad\text{and}\quad \langle \bar{\mu}_{\sigma},\beta-\revB{\Gw(\bar \uu_\sigma)} \rangle=0;
\end{equation}
{Discrete adjoint equation}
\begin{equation}\label{discrete adjoint sigma}
B(\vv_{kh},\bar{\zz}_{\sigma}) = ( \bar{\uu}_{\sigma}-\uu_d+{\bar{\mu}_{\sigma}}\ww, \vv_{kh})_{I\times\Om} \quad \forall \vv_{kh} \in X^0_{k}(\Vh);
\end{equation}
{Discrete variational inequality}
\begin{equation}\label{discrete variational sigma}
\left(\alpha\bar{\q}_{\sigma}+\bar{\zz}_{\sigma}, \delta \q-\bar{\q}_{\sigma}\right)_{I\times \Om}\geq 0 \quad  \forall \delta \q\in \Q_{0,ad} \quad \Leftrightarrow \quad \bar{\q}_{\sigma} =P_{[\q_a,\q_b]}\left(-\frac{1}{\alpha}\bar{\zz}_{\sigma}\right).
\end{equation}
\end{subequations}
\revB{%
  Furthermore, there exist $\bar \mu_{\sigma}^m \in \R_{\ge 0}$, $m=1,2,\dots,M$,
  such that the discrete Lagrange
  multiplier $\bar{\mu}_{\sigma}\in L^1(I)$ satisfies the expression 
  \begin{equation}\label{discrete Lagrange sigma}
    \bar \mu_{\sigma} = \sum_{m=1}^M \frac{\bar \mu_{\sigma}^m}{k_m} \chi_{I_m},
  \end{equation}
  where $\chi_{I_m}$ denotes the characteristic function of the interval $I_m$.
}
\end{theorem}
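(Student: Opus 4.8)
The plan is to follow the proof of the variationally discrete system in \Cref{discrete first order optimality system} essentially verbatim, since the only structural change is that the admissible set $\Q_{ad}$ is replaced by its finite-dimensional subset $\Q_{0,ad}$, while the finitely many constraints encoded by $\mathcal{G}_{kh}$ and the cone $\mathcal{K}_{kh}\subset\R^M$ are formally the same. First I would invoke \Cref{lemm:fully_discrete_Slater}, which under \Cref{ass:khCoupling} and for sufficiently small $h$ provides a feasible point $\pi_d\tilde\q\in\Q_{0,ad}$ with $\Gw(\uu_{kh}(\pi_d\tilde\q))<\beta$ on $\bar I$, i.e. $\mathcal{G}_{kh}(\pi_d\tilde\q)\in\operatorname{int}(\mathcal{K}_{kh})$. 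This is precisely the constraint qualification needed to apply the generalized KKT theorem \cite[Theorem 5.2]{casas_boundary_1993} to the reduced problem \eqref{eq:fully_discrete_optimal_control_reduced}.

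Applying that theorem, the optimality of $\bar\q_\sigma$ becomes equivalent to the existence of a multiplier vector $(\bar\mu_\sigma^m)_{m=1}^M\in\R^M_{\ge 0}$, together with the adjoint state $\bar\zz_\sigma\in X^0_k(\Vh)$ defined by the discrete dual equation \eqref{discrete adjoint sigma} with right-hand side $\bar\uu_\sigma-\uu_d+\bar\mu_\sigma\ww$, such that the complementarity relations \eqref{discrete complementary sigma} and the variational inequality \eqref{discrete variational sigma} hold. Since the discrete solution operator $S_{kh}$ and the constraint map $\mathcal{G}_{kh}$ are unchanged, the derivation of the adjoint equation and of the complementarity conditions is identical to the variationally discrete case and requires no new argument.

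The single point at which the restriction to $\Q_{0,ad}$ intervenes is the variational inequality. As now both $\bar\q_\sigma$ and every admissible direction $\delta\q$ lie in $\Q_0$, I would exploit that $\pi_d$ is the $L^2(I\times\Om)$-orthogonal projection onto $\Q_0$ to rewrite $\IOprod{\alpha\bar\q_\sigma+\bar\zz_\sigma,\delta\q-\bar\q_\sigma}=\IOprod{\alpha\bar\q_\sigma+\pi_d\bar\zz_\sigma,\delta\q-\bar\q_\sigma}$, and then localize the inequality on each space-time cell $I_m\times K$. Because the bounds $\q_a,\q_b$ are constant, the localized inequality is equivalent to a cellwise pointwise projection, which yields the characterization \eqref{discrete variational sigma} (with the cell average $\pi_d\bar\zz_\sigma$ entering in place of $\bar\zz_\sigma$, so that the resulting control genuinely belongs to $\Q_0$). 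Finally, declaring $\bar\mu_\sigma=\sum_{m=1}^M\frac{\bar\mu_\sigma^m}{k_m}\chi_{I_m}$ as in \eqref{discrete Lagrange sigma} realizes the finite multiplier vector as a nonnegative density in $L^1(I)$, and the pairing $\langle\bar\mu_\sigma,\cdot\rangle$ reproduces the componentwise complementarity in \eqref{discrete complementary sigma}.

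The analytic ingredients, namely discrete Slater feasibility, the KKT theorem, and the already established discrete adjoint, are all in hand, so I expect the only delicate bookkeeping to be the variational inequality: one must take care that the projection identity is expressed against the $L^2$-averaged adjoint $\pi_d\bar\zz_\sigma$, guaranteeing $\bar\q_\sigma\in\Q_0$, and that the $\R^M$ multiplier is correctly identified with its piecewise-constant $L^1(I)$ representation.
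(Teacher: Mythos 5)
Your proposal is correct and follows essentially the same route as the paper, whose entire proof consists of the remark that the argument is almost identical to that of \Cref{discrete first order optimality system}: discrete Slater point from \Cref{lemm:fully_discrete_Slater}, the generalized KKT theorem of Casas, and identification of the $\R^M_{\ge 0}$ multiplier with a piecewise constant $L^1(I)$ density. Your extra care with the variational inequality is in fact a genuine refinement of the paper's statement: since the VI is posed only over $\Q_{0,ad}$, the pointwise characterization it yields is $\bar\q_\sigma = P_{[\q_a,\q_b]}\bigl(-\frac{1}{\alpha}\pi_d\bar\zz_\sigma\bigr)$ cellwise, whereas the formula $P_{[\q_a,\q_b]}\bigl(-\frac{1}{\alpha}\bar\zz_\sigma\bigr)$ as printed would in general not even produce an element of $\Q_0$ (the adjoint is a finite element function, not piecewise constant in space), so the displayed equivalence in \eqref{discrete variational sigma} should be read with the $L^2$-averaged adjoint as you propose.
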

\begin{proof}
The proof is almost identical to the proof of Theorem \ref{discrete first order optimality system}.
\end{proof}

\revB{%
  Again, due to $\bar \mu_\sigma \ge 0$, it holds $\|\bar \mu_\sigma\|_{L^1(I)} = \|\bar \mu_\sigma\|_{(C(\bar I)^*)} = \langle \bar \mu_\sigma, 1 \rangle = \sum_{m=1}^M \bar \mu_\sigma^m$.
}

%%%%%%%%%%%%%%%%%%%%%%%%%%%%%%%%%%%%%%%%%%%%%%%%%%%%%%%%%%%%%%%%%%%%%%%%%%%%%%%%%%%%%%%%%%%%%
% \subsection{Stability}
\begin{lemma}\label{lem: fully discrete stability discrete control}
  \revC{Let \Cref{ass:khCoupling} be satisfied, and $h$ be small enough. Then there exists a constant $C>0$
    independent of $k$ and $h$, such that the fully discrete optimal control $\bar{\q}_{\sigma}\in \Q_{0,ad}$,
    solving \eqref{eq:minprob_fully_discrete}-\eqref{eq:stateconstraint_fully_discrete}, together with its 
    corresponding state $\bar{\uu}_{\sigma}\in X^0_{k}(\Vh)$ and corresponding multiplier 
    $\bar{\mu}_{\sigma}\in L^1(\bar I)$ satisfies the bound}
$$
\|\bar{\q}_{\sigma}\|_{L^\infty(I;L^2(\Om))}
+\|\bar{\uu}_{\sigma}\|_{\LtwoLtwo}+\|\bar{\mu}_{\sigma}\|_{(C(\bar{I}))^*}\le C.
%\quad \forall k\le k_0\ \text{and}\ \forall h\le h_0.
$$
\end{lemma}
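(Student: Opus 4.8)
The plan is to transcribe the proof of \Cref{lem: fully discrete stability} almost verbatim, the only structural change being that every test direction fed into the variational inequality must now lie in $\Q_{0,ad}$ rather than in $\Q_{ad}$. I would first secure the $L^2$ bounds on state and control. Since \Cref{lemm:fully_discrete_Slater} shows that $\pi_d\tilde\q \in \Q_{0,ad}$ is feasible for the fully discrete problem, optimality gives $J(\bar\q_\sigma,\bar\uu_\sigma) \le J(\pi_d\tilde\q,\uu_{kh}(\pi_d\tilde\q))$; bounding the right-hand side through the $L^2$-stability \eqref{eq:ltwo_stability_projection} of $\pi_d$ and the discrete stability of \Cref{thm:stability_discrete_solution_stokes} yields
\begin{equation*}
  \|\bar\q_\sigma\|_{\LtwoLtwo} + \|\bar\uu_\sigma\|_{\LtwoLtwo} \le C.
\end{equation*}

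The core of the argument, and the only genuinely new point, is the uniform multiplier bound. In the variational setting one tests with $\pp = \tfrac12\bar\q + \tfrac12\tilde\q \in \Q_{ad}$ built from the \emph{continuous} optimal control, which is inadmissible here because $\bar\q \notin \Q_{0,ad}$. I would instead take $\pp := \tfrac12\bar\q_\sigma + \tfrac12\pi_d\tilde\q$, which lies in $\Q_{0,ad}$ since both summands do. Now I exploit the \emph{exact} linearity of the discrete solution operator $S_{kh}$, which gives $\uu_{kh}(\pp) = \tfrac12\bar\uu_\sigma + \tfrac12\uu_{kh}(\pi_d\tilde\q)$ without any discretization error to re-estimate. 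Combining the discrete feasibility $\Gw(\bar\uu_\sigma) \le \beta$ from \eqref{discrete complementary sigma} with the strengthened discrete Slater gap $\Gw(\uu_{kh}(\pi_d\tilde\q)) \le \beta - \tfrac{\delta}{2}$, valid for $h$ small (a quantitative refinement of \Cref{lemm:fully_discrete_Slater} using that $C\|\pi_d\tilde\q-\tilde\q\|_{\LtwoLtwo}\le\tfrac{\delta}{2}$, with $\delta>0$ the constant from \Cref{lemm:discrete_Slater}), I obtain the uniform bound $\Gw(\uu_{kh}(\pp)) \le \beta - \tfrac{\delta}{4}$ pointwise in time. Inserting $\pp$ into \eqref{discrete variational sigma} and expanding through the discrete state and adjoint equations exactly as in \eqref{eq:discrete_optcond_expanded} produces
\begin{equation*}
  0 \le \alpha\IOprod{\bar\q_\sigma,\, \pp - \bar\q_\sigma} + \IOprod{\bar\uu_\sigma - \uu_d,\, \uu_{kh}(\pp) - \bar\uu_\sigma} + \langle\bar\mu_\sigma,\, \Gw(\uu_{kh}(\pp)) - \Gw(\bar\uu_\sigma)\rangle.
\end{equation*}
The first two terms are controlled by the $L^2$ bounds just established, while $\bar\mu_\sigma \ge 0$, the complementarity identity $\langle\bar\mu_\sigma,\beta-\Gw(\bar\uu_\sigma)\rangle=0$, and $\Gw(\uu_{kh}(\pp)) \le \beta - \tfrac{\delta}{4}$ bound the last term by $-\tfrac{\delta}{4}\langle\bar\mu_\sigma,1\rangle$. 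This gives $0 \le C - \tfrac{\delta}{4}\langle\bar\mu_\sigma,1\rangle$, hence $\|\bar\mu_\sigma\|_{L^1(I)} = \langle\bar\mu_\sigma,1\rangle \le C$.

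Finally I would recover the remaining two norms. The right-hand side $\bar\uu_\sigma - \uu_d + \bar\mu_\sigma\ww$ of the discrete adjoint equation \eqref{discrete adjoint sigma} is bounded in $L^2(I;L^2(\Om)^d) + L^1(I;L^2(\Om)^d)$, since $\|\bar\mu_\sigma\ww\|_{L^1(I;L^2(\Om))} = \|\bar\mu_\sigma\|_{L^1(I)}\|\ww\|_{L^2(\Om)} \le C$; the discrete dual stability of \Cref{rem:stability_discrete_dual} then yields $\|\bar\zz_\sigma\|_{L^\infty(I;L^2(\Om))} \le C$. The projection representation $\bar\q_\sigma = \Pad(-\tfrac1\alpha\bar\zz_\sigma)$ from \eqref{discrete variational sigma} transfers this to $\|\bar\q_\sigma\|_{L^\infty(I;L^2(\Om))} \le C$, and since $\bar\mu_\sigma \ge 0$ we have $\|\bar\mu_\sigma\|_{(C(\bar I))^*} = \|\bar\mu_\sigma\|_{L^1(I)} \le C$, which completes the bound. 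I expect the construction of the admissible test direction $\pp \in \Q_{0,ad}$, together with the replacement of continuous by discrete linearity and Slater gap, to be the only step requiring care; everything else is a direct transcription of the argument for \Cref{lem: fully discrete stability}.
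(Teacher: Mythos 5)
Your proof is correct, and it deviates from the paper's in one interesting place. The paper carries out the multiplier bound by ``following the same steps'' as the proof of \Cref{lem: fully discrete stability}, i.e.\ it tests the discrete variational inequality with $\pi_d\pp$ where $\pp=\tfrac12\bar\q+\tfrac12\tilde\q$ is built from the \emph{continuous} optimal control; establishing $\Gw(\uu_{kh}(\pi_d\pp))\le\beta-\tfrac14\delta$ then requires comparing $\uu_{kh}(\pi_d\pp)$ with the continuous state $\uu(\pp)$, which invokes the finite element error estimate of \Cref{cor: error estimate for convex}, the $L^\infty(I;L^2(\Om)^d)$ regularity of $\bar\q$ from \Cref{thm: regularity optimal}, and the projection error of $\pi_d$. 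You instead test with $\tfrac12\bar\q_\sigma+\tfrac12\pi_d\tilde\q\in\Q_{0,ad}$ and obtain the gap $\Gw(\uu_{kh}(\pp))\le\beta-\tfrac14\delta$ purely from the linearity of $S_{kh}$, the discrete feasibility $\Gw(\bar\uu_\sigma)\le\beta$, and a quantified version of \Cref{lemm:fully_discrete_Slater}; no further discretization error enters. The skeleton (feasibility of $\pi_d\tilde\q$ for the $L^2$ bounds, variational inequality plus complementarity for the multiplier, then \Cref{rem:stability_discrete_dual} and the projection formula for $\|\bar\q_\sigma\|_{L^\infty(I;L^2(\Om))}$) is identical to the paper's, but your choice of test direction makes the key step self-contained at the discrete level and is, if anything, cleaner. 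The one point you should make explicit is that the constant $\delta$ in your strengthened Slater gap is the uniform (in $k,h$) constant produced by \Cref{lemm:discrete_Slater}, so that the resulting bound $\|\bar\mu_\sigma\|_{L^1(I)}\le C$ is indeed independent of the discretization; as written your argument already supplies this.
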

\begin{proof}
  By \Cref{lemm:fully_discrete_Slater} under the given assumptions, the fully discrete control
  $\pi_d \tilde \q$ is feasible, and thus it holds 
$$
\begin{aligned}
  J(\bar{\q}_{\sigma},\bar{\uu}_{\sigma})
  &\le J(\pi_d\tilde{\q},\revision{\uu_{kh}}(\pi_d\tilde{\q}))\\
  &=\frac{1}{2} \norm{\revision{\uu_{kh}}(\pi_d\tilde{\q}) - \uu_d}^2_{\LtwoLtwo} + \frac{\alpha}{2} \norm{\pi_d\tilde{\q}}^2_{\LtwoLtwo}\\
%   &\le C\left( \norm{\revision{\uu_{kh}}(\pi_d \tilde{\q} - \tilde{\q})}^2_{\LtwoLtwo} + 
%   \norm{\revision{\uu_{kh}}(\tilde{\q}) - \uu(\tilde{\q})}^2_{\LtwoLtwo} + 
%   \norm{\uu(\tilde{\q}) - \uu_d}^2_{\LtwoLtwo}+\norm{\tilde{\q}}^2_{\LtwoLtwo}\right)\\
%     &\le C\left( \norm{\revision{\uu_{kh}}(\pi_d \tilde{\q})}^2_{\LtwoLtwo} + 
%   \norm{\uu_d}^2_{\LtwoLtwo}+\norm{\tilde{\q}}^2_{\LtwoLtwo}\right)\\
    &\le \norm{\revision{\uu_{kh}}(\pi_d \tilde{\q})}^2_{\LtwoLtwo} + 
    \norm{\uu_d}^2_{\LtwoLtwo}+\frac{\alpha}{2}\norm{\tilde{\q}}^2_{\LtwoLtwo} \\
&\le C(T,\tilde{\q},\uu_d),
\end{aligned}
$$
\revision{where in the last step, we have used the discrete stability result for $\uu_{kh}$ from
  \Cref{thm:stability_discrete_solution_stokes} and the stability of $\pi_d$ from 
  \eqref{eq:ltwo_stability_projection}.}
As a result we obtain
$
\|\bar{\q}_{\sigma}\|_{\LtwoLtwo}+\|\bar{\uu}_{\sigma}\|_{\LtwoLtwo}\le C.
$
\revB{%
  The proof of $\|\bar \mu_\sigma\|_{C(\bar I)^*)} \le C$ and $\|\bar \q_\sigma\|_{L^\infty(I;L^2(\Om))} \le C$
  is then accomplished by following the same steps
  as the proof of \Cref{lem: fully discrete stability},  making use of 
  $\Gw(\uu_{kh}({\pi_d\pp}))\le \beta -\frac{1}{4}\delta$ for $h$ sufficiently small.
}
% Put $\pp=\frac{1}{2}\bar{\q}+\frac{1}{2}\tilde{\q}$. Then for all $k\le k_0$ and $h\le h_0$, as in the proof of Lemma \ref{lem: fully discrete stability},
% $$
% G_{\bf w}(\uu_{kh}({\pi_d\pp}))\le b-\frac{1}{4}\delta.
% $$
% Since $\pi_d\pp\in \Q_{ad}$ by similar arguments as in the proof of Lemma \ref{lem: fully discrete stability},
% $$
% 0 \le \left(\bar{\zz}_{\sigma}+\alpha\bar{\q}_{\sigma},\pi_d \pp-\bar{\q}_{\sigma}\right)_{I\times \Om}\le C-\frac{\delta}{4}\langle \bar{\mu}_{\sigma},1 \rangle.
% $$
% Thus, using $\bar{\mu}_{\sigma}\geq 0$ and as a result 
% $$
% \|\bar{\mu}_{\sigma}\|_{C(\bar{I})^*}=\langle \bar{\mu}_{\sigma},1 \rangle\le C.
% $$
% This completes the proof. 
\end{proof}

%%%%%%%%%%%%%%%%%%%%%%%%%%%%%%%%%%%%%%%%%%%%%%%%%%%%%%%%%%%%%%%%%%%%%%%%%%%%%%%%%%%%%%%%%%%%%

\begin{theorem}\label{thm:err_est_full_discrete_minprob}
  \revC{Let \Cref{ass:khCoupling} be satisfied and let $h$ be small enough.}
  Let $(\bar \q_{kh},\bar \uu_{kh})$ and $(\bar \q_\sigma,\bar \uu_\sigma)$ denote the optimal solutions
  of the variationally discretized optimal control problem 
  \eqref{eq:discrete_optimal}-\eqref{eq:discrete_constraint} and the fully discretized optimal control
  problem \eqref{eq:minprob_fully_discrete}-\eqref{eq:stateconstraint_fully_discrete}.
  Then there exists a constant $C>0$ such that it holds
\[
\alpha\norm{\bar{\q}_{kh} - \bar{\q}_{\sigma}}^2_{\LtwoLtwo}
+\norm{\bar{\uu}_{kh} - \bar{\uu}_{\sigma}}^2_{\LtwoLtwo} \le Ch^2.
\]
\end{theorem}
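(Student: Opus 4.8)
The plan is to exploit that the two discrete problems share the same state operator $S_{kh}$, constraint operator $\mathcal G_{kh}$ and bilinear form $B(\cdot,\cdot)$, differing only through the control space ($\Q_{ad}$ versus $\Q_{0,ad}=\Q_0\cap\Q_{ad}$), so the whole error is driven by the $L^2$-projection defect $\pi_d\bar\q_{kh}-\bar\q_{kh}$. First I would test the variational inequality \eqref{discrete variational} with the admissible choice $\delta\q=\bar\q_\sigma$ (admissible since $\Q_{0,ad}\subset\Q_{ad}$) and the variational inequality \eqref{discrete variational sigma} with $\delta\q=\pi_d\bar\q_{kh}\in\Q_{0,ad}$ (recall $\pi_d(\Q_{ad})\subset\Q_{0,ad}$). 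Adding the two and splitting $\pi_d\bar\q_{kh}-\bar\q_\sigma=(\pi_d\bar\q_{kh}-\bar\q_{kh})+(\bar\q_{kh}-\bar\q_\sigma)$ yields, after collecting the diagonal terms,
\[
\alpha\norm{\bar\q_{kh}-\bar\q_\sigma}^2_{\LtwoLtwo}
+\IOprod{\bar\zz_{kh}-\bar\zz_\sigma,\bar\q_{kh}-\bar\q_\sigma}
\le \IOprod{\alpha\bar\q_\sigma+\bar\zz_\sigma,\pi_d\bar\q_{kh}-\bar\q_{kh}}.
\]

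Next I would turn the cross term on the left into the state-error energy. Subtracting the discrete state equations \eqref{discrete state} and \eqref{discrete state sigma} gives $B(\bar\uu_{kh}-\bar\uu_\sigma,\vv_{kh})=\IOprod{\bar\q_{kh}-\bar\q_\sigma,\vv_{kh}}$ for all $\vv_{kh}\in X^0_k(\Vh)$; testing with $\vv_{kh}=\bar\zz_{kh}-\bar\zz_\sigma$ and then using the adjoint difference of \eqref{discrete adjoint} and \eqref{discrete adjoint sigma} tested with $\vv_{kh}=\bar\uu_{kh}-\bar\uu_\sigma$ produces the identity
\[
\IOprod{\bar\zz_{kh}-\bar\zz_\sigma,\bar\q_{kh}-\bar\q_\sigma}
= \norm{\bar\uu_{kh}-\bar\uu_\sigma}^2_{\LtwoLtwo}
+\langle\bar\mu_{kh}-\bar\mu_\sigma,\Gw(\bar\uu_{kh})-\Gw(\bar\uu_\sigma)\rangle.
\]
The coupling term is nonnegative: writing it as $\langle\bar\mu_{kh},\beta-\Gw(\bar\uu_\sigma)\rangle+\langle\bar\mu_\sigma,\beta-\Gw(\bar\uu_{kh})\rangle$ after using the complementarity relations $\langle\bar\mu_{kh},\beta-\Gw(\bar\uu_{kh})\rangle=\langle\bar\mu_\sigma,\beta-\Gw(\bar\uu_\sigma)\rangle=0$ from \eqref{discrete complementary} and \eqref{discrete complementary sigma}, both remaining pairings are nonnegative by $\bar\mu_{kh},\bar\mu_\sigma\ge0$ together with the feasibility $\Gw(\bar\uu_{kh})|_{I_m},\Gw(\bar\uu_\sigma)|_{I_m}\le\beta$. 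Hence the left-hand side is bounded below by $\norm{\bar\uu_{kh}-\bar\uu_\sigma}^2_{\LtwoLtwo}$, which I then move over to obtain $\alpha\norm{\bar\q_{kh}-\bar\q_\sigma}^2_{\LtwoLtwo}+\norm{\bar\uu_{kh}-\bar\uu_\sigma}^2_{\LtwoLtwo}$ on the left.

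It then remains to bound the consistency remainder $\IOprod{\alpha\bar\q_\sigma+\bar\zz_\sigma,\pi_d\bar\q_{kh}-\bar\q_{kh}}$. Since $\bar\q_\sigma\in\Q_0$ and $\pi_d\bar\q_{kh}-\bar\q_{kh}$ is $L^2$-orthogonal to $\Q_0$ by \eqref{eq:discrete_control_projection}, the term $\alpha\IOprod{\bar\q_\sigma,\pi_d\bar\q_{kh}-\bar\q_{kh}}$ vanishes. For the remaining term I insert $\pi_d\bar\zz_\sigma\in\Q_0$, again using orthogonality, to write it as $\IOprod{\bar\zz_\sigma-\pi_d\bar\zz_\sigma,\pi_d\bar\q_{kh}-\bar\q_{kh}}$ and apply Cauchy--Schwarz. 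Because $\bar\zz_\sigma\in X^0_k(\Vh)$ and $\bar\q_{kh}$ are piecewise constant in time, on each interval $\pi_d$ reduces to the cellwise spatial average, so the standard piecewise-constant approximation estimate gives $\norm{\bar\zz_\sigma-\pi_d\bar\zz_\sigma}_{\LtwoLtwo}\le Ch\,\norm{\nabla\bar\zz_\sigma}_{\LtwoLtwo}$ and $\norm{\pi_d\bar\q_{kh}-\bar\q_{kh}}_{\LtwoLtwo}\le Ch\,\norm{\nabla\bar\q_{kh}}_{\LtwoLtwo}$, whence the product is of order $h^2$.

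The main obstacle is securing the two uniform spatial $H^1$ bounds needed for the $O(h)\cdot O(h)$ product. The bound $\norm{\nabla\bar\zz_\sigma}_{\LtwoLtwo}\le C$ follows from the discrete dual stability of \Cref{rem:stability_discrete_dual} applied to the right-hand side $\bar\uu_\sigma-\uu_d+\bar\mu_\sigma\ww$, controlled uniformly by \Cref{lem: fully discrete stability discrete control}. The more delicate bound $\norm{\nabla\bar\q_{kh}}_{\LtwoLtwo}\le C$ relies on the representation $\bar\q_{kh}=\Pad(-\tfrac1\alpha\bar\zz_{kh})$ from \eqref{discrete variational}: one uses that pointwise projection onto the box $[\q_a,\q_b]$ is a contraction in the $H^1$-seminorm (Stampacchia), so $\norm{\nabla\bar\q_{kh}}_{\LtwoLtwo}\le\tfrac1\alpha\norm{\nabla\bar\zz_{kh}}_{\LtwoLtwo}$, with $\norm{\nabla\bar\zz_{kh}}_{\LtwoLtwo}\le C$ again from \Cref{rem:stability_discrete_dual} and \Cref{lem: fully discrete stability}. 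Collecting these estimates yields $\alpha\norm{\bar\q_{kh}-\bar\q_\sigma}^2_{\LtwoLtwo}+\norm{\bar\uu_{kh}-\bar\uu_\sigma}^2_{\LtwoLtwo}\le Ch^2$, as claimed.
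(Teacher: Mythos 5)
Your proposal is correct and follows essentially the same route as the paper: the same test functions $\bar\q_\sigma$ and $\pi_d\bar\q_{kh}$ in the two variational inequalities, the same identification of the cross term with $\norm{\bar\uu_{kh}-\bar\uu_\sigma}^2_{\LtwoLtwo}$ plus a nonnegative multiplier coupling via complementarity, and the same orthogonality-plus-Stampacchia treatment of the projection defect using Remark~\ref{rem:stability_discrete_dual} and Lemmas~\ref{lem: fully discrete stability} and~\ref{lem: fully discrete stability discrete control}. The only difference is cosmetic (you move the $I_1$ term to the left-hand side instead of bounding it on the right).
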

\begin{proof}
  Choosing $\delta\q=\bar{\q}_{\sigma} \in \Q_{ad}$ in \eqref{discrete variational} and
  $\delta\q=\pi_d\bar{\q}_{kh} \in \Q_{0,ad}$ in \eqref{discrete variational sigma}, results in
\begin{equation}
\left(\alpha\bar{\q}_{kh}+\bar{\zz}_{kh}, \bar{\q}_{\sigma}-\bar{\q}_{kh}\right)_{I\times \Om}\geq 0\quad \text{and}\quad \left(-\alpha\bar{\q}_{\sigma}-\bar{\zz}_{\sigma}, \bar{\q}_{\sigma}-\pi_d\bar{\q}_{kh}\right)_{I\times \Om}\geq 0.
\end{equation}
Adding these two inequalities, we obtain
\begin{equation}
\alpha\|\bar{\q}_{kh}-\bar{\q}_\sigma\|^2_{\LtwoLtwo}\le (\bar{\zz}_{kh}- \bar{\zz}_{\sigma},\bar{\q}_{\sigma}-\bar{\q}_{kh})_{I\times \Om}+
(\alpha\bar{\q}_{\sigma}+\bar{\zz}_{\sigma}, \pi_d\bar{\q}_{kh}-\bar{\q}_{kh})_{I\times \Om}:=I_1+I_2.
\end{equation} 
We estimate the two terms separately. 

{\bf{Estimate for $I_1$.}}
Using the  discrete state equations \eqref{discrete state},\eqref{discrete state sigma} the corresponding adjoint equations \eqref{discrete adjoint} and \eqref{discrete adjoint sigma}, respectively,  we have
$$
\begin{aligned}
I_1 &= B(\bar{\uu}_{\sigma}-\bar{\uu}_{kh},\bar{\zz}_{kh}- \bar{\zz}_{\sigma})\\
&= (\bar{\uu}_{\sigma}-\bar{\uu}_{kh},\bar{\uu}_{kh}-\bar{\uu}_{\sigma})_{I \times \Omega}+\langle G_{\bf w}(\bar{\uu}_{\sigma})-G_{\bf w}(\bar{\uu}_{kh}),\bar{\mu}_{kh}\rangle-\langle G_{\bf w}(\bar{\uu}_{\sigma})-G_{\bf w}(\bar{\uu}_{kh}),\bar{\mu}_{\sigma}\rangle\\
&\le -\norm{\bar{\uu}_{kh} - \bar{\uu}_{\sigma}}^2_{\LtwoLtwo}+\langle \beta -G_{\bf w}(\bar{\uu}_{kh}),\bar{\mu}_{kh}\rangle+\langle \beta -G_{\bf w}(\bar{\uu}_{\sigma}),\bar{\mu}_{\sigma}\rangle\\
&= -\norm{\bar{\uu}_{kh} - \bar{\uu}_{\sigma}}^2_{\LtwoLtwo}.
\end{aligned}
$$
By the Cauchy-Schwarz inequality and properties of the $L^2$-projection
\begin{equation*}
\begin{aligned}
    I_2 &= \IOprod{\alpha \bar \q_\sigma + \bar \zz_\sigma,\pi_d\bar \q_{kh} - \bar \q_{kh}}\\
    &    = \IOprod{\bar \zz_\sigma - \pi_d \bar \zz_\sigma,\pi_d\bar \q_{kh} - \bar \q_{kh}}\\
    &     \le C h^2 \|\nabla \bar \zz_\sigma\|_{\LtwoLtwo} \|\nabla \bar \q_{kh}\|_{\LtwoLtwo}.
         \end{aligned}
\end{equation*}
% \Red{
% $$
% \begin{aligned}
% I_2&=(\alpha\bar{\q}_{\sigma}+{\zz}_{kh}(\bar{\q}_{\sigma}), \bar{\q}_{kh}-\pi_d\bar{\q}_{kh})_{I\times \Om}+(\bar{\zz}_{\sigma}-{\zz}_{kh}(\bar{\q}_{\sigma}), \bar{\q}_{kh}-\pi_d\bar{\q}_{kh})_{I\times \Om}
% \\
% &\le (\bar{\zz}_{\sigma}-{\zz}_{kh}(\bar{\q}_{\sigma}), \bar{\q}_{kh}-\pi_d\bar{\q}_{kh})_{I\times \Om}\\
% & = (\bar{\zz}_{\sigma}-\pi_d\bar{\zz}_{\sigma}, \bar{\q}_{kh}-\pi_d\bar{\q}_{kh})_{I\times \Om}+(\pi_d{\zz}_{kh}(\bar{\q}_{\sigma})-{\zz}_{kh}(\bar{\q}_{\sigma}), \bar{\q}_{kh}-\pi_d\bar{\q}_{kh})_{I\times \Om}\\
% &\le (\|\bar{\zz}_{\sigma}-\pi_d\bar{\zz}_{\sigma}\|_{L^2(I\times \Om)}+\|\pi_d{\zz}_{kh}(\bar{\q}_{\sigma})-{\zz}_{kh}(\bar{\q}_{\sigma})\|_{L^2(I\times \Om)})\| \bar{\q}_{kh}-\pi_d\bar{\q}_{kh}\|_{L^2(I\times \Om)}\\
% &\le Ch^2(\|\na\bar{\zz}_{\sigma}\|_{L^2(I\times \Om)}+\|\na{\zz}_{kh}(\bar{\q}_{\sigma})\|_{L^2(I\times \Om)})\| \na\bar{\q}_{kh}\|_{L^2(I\times \Om)}.
% \end{aligned}
% $$
% }
Using that $\| \na\bar{\q}_{kh}\|_{\LtwoLtwo}\le \alpha^{-1}\| \na\bar{\zz}_{kh}\|_{\LtwoLtwo}$,
\revision{ which holds due to the projection formula \eqref{discrete variational}
  and the stability of $P_{ad}$ in $H^1(\Om)$,
  see \cite[Theorem A.1]{kinderlehrer_introduction_2000},
  \cite[Theorem 5.8.2]{attouch_variational_2014},}
\revB{%
  and the stability of solutions to the fully discrete dual problem, pointed out in 
  Remark \ref{rem:stability_discrete_dual}, gives
}
$$
I_2\le Ch^2\left(\|\bar{\uu}_{\sigma} - \bar{\uu}_{d}\|_{\LtwoLtwo}+\|\bar{\mu}_{\sigma}\|_{L^1(I)}\|\ww\|_{L^2(\Om)}\right)\left(\|\bar{\uu}_{kh} - \bar{\uu}_{d}\|_{\LtwoLtwo}+\|\bar{\mu}_{kh}\|_{L^1(I)}\|\ww\|_{L^2(\Om)}\right).
$$
Now the boundedness of $\|\bar{\uu}_{\sigma}\|_{\LtwoLtwo}$, $\|\bar{\uu}_{kh}\|_{\LtwoLtwo}$, $\|\bar{\mu}_{\sigma}\|_{L^1(I)}$, and $\|\bar{\mu}_{kh}\|_{L^1(I)}$ from Lemmas \ref{lem: fully discrete stability} and \ref{lem: fully discrete stability discrete control}
finish the proof.
\end{proof}

With this last error estimate, our main result now directly follows from \Cref{thm:err_est_var_discrete_minprob}
and \Cref{thm:err_est_full_discrete_minprob}.
\begin{theorem}[Error estimate for the control]\label{thm:main_result}
  \revC{Let \Cref{ass:khCoupling} be satisfied and let $h$ be small enough.}
Let $\bar \q \in \Q_{ad}$ and $\bar \q_\sigma \in \Q_{0,ad}$ be the solutions to the continuous and fully 
discrete optimal control problems 
\eqref{eq:optimal_problem}-\eqref{eq:state_constraint} and
\eqref{eq:minprob_fully_discrete}-\eqref{eq:stateconstraint_fully_discrete}.
\[
\sqrt{\alpha}\norm{\bar{\q} - \bar{\q}_{\sigma}}_{\LtwoLtwo}+\norm{\bar{\uu} - \bar{\uu}_{\sigma}}_{\LtwoLtwo} \le C \, \ell_k (k^{\frac{1}{2}}+h) , \quad \ell_k = \lk.
\]
\end{theorem}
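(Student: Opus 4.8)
The plan is to obtain the result by a straightforward triangle inequality that connects the continuous optimal solution $(\bar\q,\bar\uu)$ to the fully discrete one $(\bar\q_\sigma,\bar\uu_\sigma)$ through the variationally discretized intermediate solution $(\bar\q_{kh},\bar\uu_{kh})$, and then to invoke the two error estimates already proven. First I would split
\[
\sqrt{\alpha}\norm{\bar{\q} - \bar{\q}_{\sigma}}_{\LtwoLtwo}+\norm{\bar{\uu} - \bar{\uu}_{\sigma}}_{\LtwoLtwo}
\le \left(\sqrt{\alpha}\norm{\bar{\q} - \bar{\q}_{kh}}_{\LtwoLtwo}+\norm{\bar{\uu} - \bar{\uu}_{kh}}_{\LtwoLtwo}\right)
+\left(\sqrt{\alpha}\norm{\bar{\q}_{kh} - \bar{\q}_{\sigma}}_{\LtwoLtwo}+\norm{\bar{\uu}_{kh} - \bar{\uu}_{\sigma}}_{\LtwoLtwo}\right),
\]
so that the first bracket is controlled directly by \Cref{thm:err_est_var_discrete_minprob}, giving a bound of $C\ell_k(k^{\frac12}+h)$.

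For the second bracket I would start from the squared estimate of \Cref{thm:err_est_full_discrete_minprob}, namely $\alpha\norm{\bar{\q}_{kh} - \bar{\q}_{\sigma}}^2_{\LtwoLtwo}+\norm{\bar{\uu}_{kh} - \bar{\uu}_{\sigma}}^2_{\LtwoLtwo} \le Ch^2$, and pass to the non-squared quantities via the elementary inequality $a+b \le \sqrt{2}\,\sqrt{a^2+b^2}$. This yields $\sqrt{\alpha}\norm{\bar{\q}_{kh} - \bar{\q}_{\sigma}}_{\LtwoLtwo}+\norm{\bar{\uu}_{kh} - \bar{\uu}_{\sigma}}_{\LtwoLtwo} \le Ch$.

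Combining the two brackets gives the total bound $C\ell_k(k^{\frac12}+h) + Ch$. To fold the stray $Ch$ term into the claimed form, I would use that the step-size assumption $k \le \frac{T}{4}$ forces $\ell_k = \ln(T/k) \ge \ln 4 > 1$, so that $Ch \le C\ell_k h \le C\ell_k(k^{\frac12}+h)$, which finishes the argument.

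I do not expect a genuine obstacle at this stage: the substantive analytical work — the best-approximation Stokes estimates of \Cref{cor: error estimate for convex,cor: error estimate for convex L2}, the stability bounds of \Cref{lem: fully discrete stability,lem: fully discrete stability discrete control}, and the duality and complementarity arguments — has already been carried out in the two preceding theorems. The only points needing minor care are the conversion from the squared estimate to the norm itself and the observation that $\ell_k$ is bounded below by a positive constant, which is exactly what lets one absorb the additional $O(h)$ contribution without changing the convergence order.
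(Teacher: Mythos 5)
Your proposal is correct and matches the paper's approach: the paper states that \Cref{thm:main_result} ``directly follows'' from \Cref{thm:err_est_var_discrete_minprob} and \Cref{thm:err_est_full_discrete_minprob}, and your triangle-inequality decomposition through $(\bar\q_{kh},\bar\uu_{kh})$ is exactly the intended argument. The details you supply --- converting the squared estimate via $a+b\le\sqrt{2}\sqrt{a^2+b^2}$ and absorbing the extra $Ch$ term using $\ell_k = \ln(T/k)\ge\ln 4 >1$ from the step-size assumption $k\le T/4$ --- are the correct, if routine, steps the paper leaves implicit.
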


\begin{remark}
 All of our main results presented in   
 \Cref{thm:err_est_var_discrete_minprob,thm:err_est_full_discrete_minprob,thm:main_result} 
 do not require any additional regularity assumptions on the optimal control $\bar \q$, but 
 work precisely with the regularity that is obtained from the optimality conditions.
 Furthermore, the techniques used in proving these results allow us to avoid 
 strong coupling conditions on $k$ and $h$.
 \revC{We only require the product $\ell_k h$ to converge to zero, in order to
   obtain a Slater condition for the discrete problems, which can be guaranteed by a very mild 
 coupling condition as in \Cref{ass:khCoupling}.}
\end{remark}

%%%%%%%%%%%%%%%%%%%%%%%%%%%%%%%%%%%%%%%%%%%%%%%%%%%%%%%%%%%%%%%%%%%%%%%%%%%%%%%%%%%%%%%%%%%%%
\section{Improved regularity}\label{sec:improved_regularity}
Despite having no a priori smoothness regularity for $\bar{\zz}$ and as a result for $\bar{\q}$, 
our main result shows almost $k^{\frac{1}{2}}$ convergence rate for the error of the optimal control.
Similarly to \cite{MeidnerD_RannacherR_VexlerB_2011}, we can use this result to establish improved regularity  for the optimal control.
\begin{theorem}
Let $\bar{\q}\in \Q_{ad}$ be the optimal solution to \eqref{eq:optimal_problem}-\eqref{eq:state_constraint}. Then,
$$
\bar{\q}\in L^2(I;H^1(\Omega)^d)\cap H^s(I;L^2(\Om)^d),\quad \forall s<\frac{1}{2}.
$$ 
Additionally, if $\q_a,\q_b \in \R^d$, i.e., are finite, it holds 
$\bar \q \in \revC{L^\infty(I \times \Om)^d}$.
\end{theorem}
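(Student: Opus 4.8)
The plan is to prove the three asserted regularities separately, reusing what is already available and extracting the genuinely new fractional-in-time regularity from the error estimate of \Cref{thm:main_result}. The spatial regularity $\bar\q \in L^2(I;H^1(\Om)^d)$ and, when $\q_a,\q_b \in \R^d$ are finite, the bound $\bar\q \in L^\infty(I\times\Om)^d$ are already contained in \Cref{thm: regularity optimal}, so nothing new is needed for those statements. The entire work lies in showing the temporal regularity $\bar\q \in H^s(I;L^2(\Om)^d)$ for every $s<\tfrac12$, following the strategy of \cite{MeidnerD_RannacherR_VexlerB_2011}. The restriction $s<\tfrac12$ is natural here: the approximation machinery only produces functions that are piecewise constant in time, which may have jumps and hence belong to $H^s(I;L^2(\Om)^d)$ precisely for $s<\tfrac12$.

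First I would introduce the $L^2(I)$-orthogonal projection $P_k$ in time onto the piecewise constant functions on the given partition with values in $L^2(\Om)^d$. By construction $P_k\bar\q$ is the best approximation of $\bar\q$ in $\LtwoLtwo$ among all time-piecewise-constant $L^2(\Om)^d$-valued functions. Since the fully discrete optimal control $\bar\q_\sigma \in \Q_{0,ad}$ is piecewise constant in time (and in space), it is one such competitor, whence
\begin{equation*}
  \|\bar\q - P_k\bar\q\|_{\LtwoLtwo} \le \|\bar\q - \bar\q_\sigma\|_{\LtwoLtwo}.
\end{equation*}
The crucial observation is that the left-hand side does not depend on the spatial mesh size $h$, while the right-hand side is controlled by \Cref{thm:main_result} for every admissible pair $(k,h)$. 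Fixing $k$ and letting $h \to 0$ along pairs satisfying \Cref{ass:khCoupling} therefore yields, for all sufficiently small $k$,
\begin{equation*}
  \|\bar\q - P_k\bar\q\|_{\LtwoLtwo} \le \frac{C}{\sqrt\alpha}\, \ell_k\, k^{\frac12}, \qquad \ell_k = \lk .
\end{equation*}

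Next I would feed this decay rate into the approximation-theoretic characterization of fractional Sobolev spaces. Specializing to a sequence of uniform partitions with $k_j = T\,2^{-j}$, which trivially satisfy the assumptions on the time mesh, the bound above reads $\|\bar\q - P_{k_j}\bar\q\|_{\LtwoLtwo} \le C\, j\, 2^{-j/2}$, since $\ell_{k_j} = j\ln 2$. For any fixed $s<\tfrac12$ one then verifies the summability
\begin{equation*}
  \sum_{j\ge 0} 2^{2sj}\,\|\bar\q - P_{k_j}\bar\q\|_{\LtwoLtwo}^2
  \le C \sum_{j\ge 0} 2^{-(1-2s)j}\, j^2 < \infty,
\end{equation*}
because the exponential decay governed by $1-2s>0$ dominates the logarithmic factor $j^2$. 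By the equivalence between $H^s(I;L^2(\Om)^d) = B^s_{2,2}(I;L^2(\Om)^d)$ (for $0<s<\tfrac12$) and the approximation space defined through square-summability of the weighted dyadic projection errors, this shows $\bar\q \in H^s(I;L^2(\Om)^d)$ for every $s<\tfrac12$.

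The main obstacle is this last step: making rigorous the passage from a single piecewise-constant approximation rate to membership in the fractional space. This rests on the precise characterization of $H^s(I;L^2(\Om)^d)$ by the $P_k$-approximation errors, which is available only for $s<\tfrac12$, and on the fact that the logarithmic factor $\ell_k$ is harmless since it is absorbed by any exponent strictly below $\tfrac12$. A secondary technical point, already settled above, is that $P_k\bar\q$ is independent of $h$; this is exactly what allows the spatial contribution $h$ in \Cref{thm:main_result} to be driven to zero without incurring any coupling penalty, leaving the clean temporal rate $k^{1/2}$ from which the regularity is read off.
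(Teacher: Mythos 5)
Your proposal is correct and follows essentially the same route as the paper, which simply defers to the proof of Theorem~7.1 in \cite{MeidnerD_RannacherR_VexlerB_2011}: the spatial and $L^\infty$ statements are read off from \Cref{thm: regularity optimal}, and the temporal $H^s$-regularity is extracted by comparing the best time-piecewise-constant approximation $P_k\bar\q$ against $\bar\q_\sigma$, sending $h\to 0$ in the estimate of \Cref{thm:main_result}, and invoking the Besov-space characterization of $H^s(I;L^2(\Om)^d)$ for $s<\tfrac12$ via dyadic piecewise-constant approximation rates. The two points you flag as needing care (the $h$-independence of $P_k\bar\q$ and the restriction $s<\tfrac12$ in the approximation-theoretic characterization) are exactly the right ones, and both are handled correctly.
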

\begin{proof}
The proof is identical to the proof of Theorem 7.1 in
\cite{MeidnerD_RannacherR_VexlerB_2011}.
\end{proof}
\revC{In light of \Cref{thm:optimal_regularity_higher_smoothness} and the imbedding $\BV(I) \hookrightarrow H^s(I)$ for all $s < \frac{1}{2}$, this result is particularly interesting, as it holds without additional 
regularity assumptions on the weight $\ww$.
Note also, that the regulartiy properties of \Cref{thm:optimal_regularity_higher_smoothness} and its 
corrollaries were not used in the derivation of our error estimates.
It remains an open question, how \revC{these results} can be used 
to derive improved error estimates for the optimal control problem directly.
}

%%%%%%%%%%%%%%%%%%%%%%%%%%%%%%%%%%%%%%%%%%%%%%%%%%%%%%%%%%%%%%%%%%%%%%%%%%%%%%%%%%%%%%%%%%%%%
\section{Numerical results}\label{sec:num_results}
In the following, we present \revision{three} numerical examples,
studying the orders of convergence for the optimal 
control problem. We first present an example for smooth data. These results can be compared to
the numerical example in \cite{MeidnerD_RannacherR_VexlerB_2011}, where the weight function
$w(x_1,x_2)=\sin{(\pi x_1)}\sin{(\pi x_2)}$ on the unit square was used. As this weight is in
$H^1_0(\Om)\cap H^2(\Om)$ almost first order convergence in time was observed, which we again can see 
for the Stokes optimal control problem.
In the second example, we study an example where less regularity of the data is available, leading to a 
reduced order of convergence.
\revision{Lastly, we consider an example with simultaneous control and state constraints.
  Throughout this section, we will analyze the errors of the full discretization $\bar \q - \bar \q_{\sigma}$.
  For details regarding the 
  implementation of a variational discretization in the presence of control constraints, we refer to
  \cite{hinze_variational_2005}, \cite[Chapter 3.2.5]{hinze_optimization_2009} and the 
  references therein.
}
\subsection{Solution Algorithm}
Equation \eqref{eq:fully_discrete_optimal_control_reduced} can be written in matrix notation as
\begin{equation}
  \min_{\q_\sigma \in \R^N} 
  \frac{1}{2} (S_{kh} M_{kh}^{uq} \q_\sigma - \uu_{d,kh})^T M_{kh}^{uu} (S_{kh} M_{kh}^{uq} 
  \q_\sigma -\uu_{d,kh}) 
  + \frac{1}{2} \alpha \q_\sigma^T M_{kh}^{qq} \q_\sigma \qquad \text{s.t.} \quad
  W_{kh} S_{kh} \revC{M_{kh}^{uq}} \q_\sigma \le \beta,
\end{equation}
where $M_{kh}^{uu}$ is the mass matrix of $X^0_k(\Vh)$, $M_{kh}^{qq}$ is the mass matrix of $\Q_0$, 
\revC{$M_{kh}^{uq}$ is the mass matrix encoding inner products of $X^0_k(\Vh)$ and $\Q_0$ functions,}
$\uu_{d,kh}$ is the $L^2$ projection of $\uu_d$ onto $X^0_k(\Vh)$ and $W_{kh}$ is the matrix that maps 
each $\uu_{kh}$ to the vector $(\Gw(\uu_{kh})|_{I_m})_{m=1,..,M}$.
Note that with a slight abuse of notation, we write $S_{kh}$ for the control to state mapping as well as 
the matrix representing this mapping.
The optimality conditions for this problem read
\begin{align*}
  \alpha M_{kh}^{qq} \q_\sigma + (M_{kh}^{uq})^T S_{kh}^T M_{kh}^{uu} (S_{kh} M_{kh}^{uq} \q_h - \uu_{kh,d})
  + (M_{kh}^{uq})^T S_{kh}^T W_{kh}^T \mu_\sigma &= 0,\\
  W_{kh} S_{kh} M_{kh}^{uq} \q_\sigma \le \beta, \quad \mu_\sigma \ge 0,
  \quad \mu_\sigma^T (W_{kh} S_{kh} M_{kh}^{uq} \q_\sigma - \beta) &=0.
\end{align*}
We solve the above problem with a primal-dual-active-set strategy (PDAS), during which, for each iteration of 
the active set $\mathcal A_n \subset \{1,\dots,M\}$, we solve a symmetric \revD{saddle point} system
\begin{equation*}
  \begin{pmatrix}
    \alpha M_{kh}^{qq} + (M_{kh}^{uq})^T S_{kh}^T M_{kh}^{uu} S_{kh} M_{kh}^{uq}  & (M_{kh}^{uq})^T S_{kh}^T W_{kh}^T O_n\\
    O_n^T W_{kh} S_{kh} M_{kh}^{uq} & 0
  \end{pmatrix}
  \begin{pmatrix}
    \q_\sigma^n \\ \mu_\sigma^n
  \end{pmatrix}
  = \begin{pmatrix}
    (M_{kh}^{uq})^T S_{kh}^T M_{kh}^{uu} \uu_{kh,d}\\ \beta \cdot \mathds{1}_{|\mathcal A_n|}
  \end{pmatrix},
\end{equation*}
where $O_n \in \R^{|\mathcal A_n|\times M}$ is the matrix satisfying 
$O_n \mu_\sigma = (\mu_\sigma)_{\mathcal A_n}$, i.e., selects the active indices.
We solve the linear system with MINRES, using the block diagonal preconditioner
\begin{equation*}
  P_{kh} = \begin{pmatrix}
    \alpha M_{kh}^{qq} & 0 \\ 0 & - \frac{1}{\alpha} W_{kh}S_{kh}M_{kh}^{uq} (M_{kh}^{qq})^{-1} (M_{kh}^{uq})^T S_{kh}^T W_{kh}^T
  \end{pmatrix}.
\end{equation*}
Note that due to the choice of control discretization $M_{kh}^{qq}$ is a diagonal matrix, and if the 
partition of $I$ is uniform, and all mesh elements of $\Omega$ have the same volume, $M_{kh}^{qq}$ is a 
multiple of the identity matrix.
Further note, that if the partition of $I$ is uniform, the matrix $S_{kh}^T W_{kh}^T$ has the structure
\begin{equation*}
  \begin{pmatrix}
  \zz_M & \zz_{M-1} & ... & \zz_2 & \zz_1\\
  0     & \zz_{M} & ... & \zz_3 & \zz_2 \\
  \vdots & \vdots & \ddots & \vdots & \vdots\\
  0 & 0 & ... & \zz_{M} & \zz_{M-1}\\
  0 & 0 & ... & 0 & \zz_M
\end{pmatrix} \in \R^{ M\dim(\Vh) \times M}
\end{equation*}
where each vector $\zz_m \in \R^{\dim(\Vh)}$, $m=1,...,M$ corresponds to the degrees of freedom of
$\zz|_{I_m}$ for the solution $\zz$ to 
\begin{equation*}
  B(\vv_{kh},\zz_{kh}) = \IOprod{\ww \chi_{I_M},\vv_{kh}} \quad \text{for all} \quad \vv_{kh} \in X^0_k(\V_h).
\end{equation*}
Hence, to assemble this matrix only one discrete adjoint problem has to be solved, and a decomposition of 
the preconditioning matrix $P_{kh}$ can be computed in advance and be reused in every iteration of the 
PDAS algorithm.
The discrete solutions of the finite element problems were carried out in FEniCS Version 2019
\cite{LoggEtal2012}, using the MINI Element in space. 
% The discrete optimality system was solved by first reducing it to the control and multiplier variables
% $(\bar q_{kh},\bar \mu_{kh})$ and applying a primal-dual-active-set strategy (PDASS) to solve the resulting 
% linear-quadratic optimizitation problem. In each iteration of the PDASS, the resulting equality constrained
% problem was solved by MINRES applying a block diagonal preconditioner consisisting of the space-time mass matrix
% for the $\bar q_{kh}$ variable, and the product of the two off diagonal blocks for the $\bar \mu_{kh}$ variable.
% Note that for time discretizations of constant stepsizes, this second block can be efficiently assembled, using
% a single solve of a discrete adjoint equation. %

\subsection{Example 1}\label{subsec:examp_1}
For this example, we consider the setting $\Omega = (0,1)^2$, $I = (0,1]$,
\revB{%
choose the regularization parameter $\alpha = 1$ and set the control constraints to
$\q_a = (- \infty,-\infty)^T$, $\q_b = (+ \infty,+\infty)^T$.
}
We construct an analytic test case by considering the functions
\begin{equation*}
  \varphi(t) := \begin{cases}{}
    48 t^2 - 128 t^3 & t \in [0,1/4),\\
    1 & t \in [1/4,3/4],\\
    48 (1-t)^2 - 128 (1-t)^3 & t \in (3/4,1],
  \end{cases}
  \quad
  \yy = \frac{64 \sqrt{2}}{5\sqrt{7}}\begin{pmatrix}
    -\sin(\pi x_1)^4 \cos(\pi x_2) \sin(\pi x_2)^3\\
    \cos(\pi x_1) \sin(\pi x)^3 \sin(\pi x_2)^4
  \end{pmatrix}.
\end{equation*}
Here, $\yy$ has been constructed in such a way, that $\nabla \cdot \yy = 0$ and 
$\|\yy\|_{L^2(\Omega)}=1$.
It was obtained by considering the potential $\rho(x_1,x_2) = \left(\sin(\pi x_1)\sin(\pi x_2)\right)^4$,
and normalizing the vector field $(\partial_{x_2}\rho(x_1,x_2),-\partial_{x_1}\rho(x_1,x_2))^T$.
We choose $\bar{\uu}=\varphi(t)\yy(x_1,x_2)$, $\ww = \yy$ and $\beta \equiv 1$.
It is then straightforward to verify, that $\Gw(\bar{\uu}) \le \beta$ for all $t \in I$ 
and $\Gw(\bar{\uu}) = \beta$ if and only if 
$t \in [1/4,3/4]$. We thus choose the multiplier $\bar \mu = 10^3 \chi_{[1/4,3/4]}(t)$,
which by construction satisfies $\bar \mu \ge 0$ and $\langle \Gw(\bar \uu) -\beta,\bar \mu\rangle = 0$.
We then proceed by choosing $\bar p = 0$ and as a consequence set 
$\bar \q = \partial_t \bar \uu - \Delta \bar \uu$. We obtain $\bar \zz = - \bar \q$ and with the choice
$\bar r = 0$ can fix $\uu_d = \bar \uu + \partial_t \bar \zz + \Delta \bar \zz + \bar \mu \ww$, in such a way,
that the constructed $(\bar \q,\bar \uu)$ satisfy the first order optimality condition for this desired state.
%\begin{align*}
%  \bar p = 0, \bar \q = \partial_t \bar \uu - \Delta \bar \uu\\
%  \bar \zz = - \bar \q, \bar r = 0\\
%  \uu_d = \bar u + \partial_t \bar \zz + \Delta \bar \zz + \bar \mu \ww
%\end{align*}
Note that $\vec \psi$ has been chosen sufficiently smooth at the boundary, such that $\Delta \vec\psi|_{\partial \Omega} = 0$ and thus $\bar \q|_{\partial \Omega} = \bar \zz|_{\partial \Omega} = 0$.
%The calculation of $\uu_d$ in the construction above has been carried out in the SAGE mathematics software.
The calculation of the analytic solution was verified using the SageMath software \cite{sagemath}.
We discretize this problem with a uniform triangulation of $\Omega$ and a uniform partition of $I$.
To get more insight into the observed orders of convergence, for
\revision{a sequence of discretization levels $\{\sigma_l\} = \{(h_l,k_l)\}$,}
%sequences of discretization levels $\{h_l\}$,$\{k_l\}$ 
we report the empirical orders of convergence determined by
\begin{align*}
  (\mathrm{EOC}_h)_l &:= \dfrac{\log(\|\bar \q - \bar \q_{\revision{\sigma_l}}\|_{\LtwoLtwo}) 
  - \log(\|\bar \q - \bar \q_{\revision{\sigma_{l-1}}}\|_{\LtwoLtwo})}{\log(h_l) - \log(h_{l-1})}\\
    (\mathrm{EOC}_k)_l &:= \dfrac{\log(\|\bar \q - \bar \q_{\revision{\sigma_l}}\|_{\LtwoLtwo}) 
    - \log(\|\bar \q - \bar \q_{\revision{\sigma_{l-1}}}\|_{\LtwoLtwo})}{\log(k_l) - \log(k_{l-1})}
\end{align*}
\Cref{subfig:Ex1_hConvergence} displays convergence with respect to the spacial 
discretization parameter for fixed $k$. The theoretical convergence order of 1 can be observed.
\Cref{subfig:Ex1_kConvergence} depicts the convergence results for the time discretization parameter
for different values of $h$. Note that for this comparison, we have choosen discretization levels in time,
such that the two  boundary of the active set $[1/4,3/4]$ are midpoints of the discrete subintervals,
in order to exclude superconvergence effects.
Due to the structure of the discretization, the number of degrees of freedom,
grows rather quickly, which is why very fine discretizations are expensive.
\Cref{subfig:Ex1_kConvergence} shows, that if the spacial discretization parameter is chosen too \revC{large}, 
the stagnation phase sets in rather early. If one only observes the coarse discretizations,
the observed order of convergence is skewed, which led to an estimated order of convergence of
0.85 reported in \cite{christof_new_2021}.
As the derivation of the present example with an analytic reference solution requires the 
involved functions to have some precise regularity, we next propose an example with 
desired state that has low regularity in time. In this case a numerical reference solution is 
needed to measure the errors.

\begin{figure}[h]
  \begin{minipage}[t]{0.45\textwidth}
    \small
    \centering
  \includegraphics[height=5.0cm]{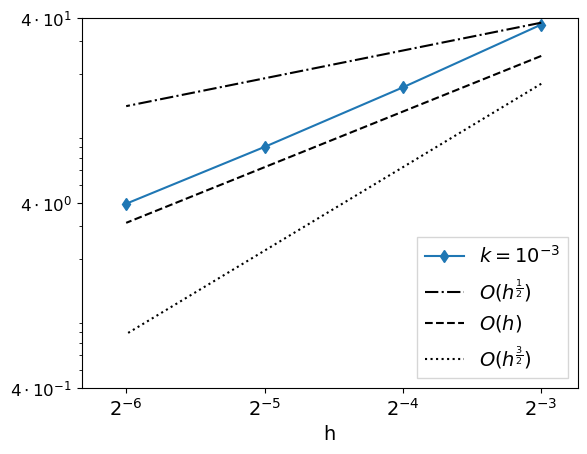}
  %\caption{blub}
  \begin{tabular}{ccc}
%     $h$     & $\|\bar \q - \bar \q_{kh}\|_{L^2(Q)}$ & EOC\\
%     \hline
%     $1/16$  & 16.89991099643564    & -                        \\
%     $1/20$  & 13.231734586874119   & 1.0965599244938378       \\
%     $1/24$  & 10.882105395028395   & 1.0722722524418646       \\
%     $1/32$  & 8.046177825709162    & 1.0495007780319856       \\
%     $1/64$  & 3.9657470127802688   & 1.0207109762640825       \\
%     ~\\~\\~
    \toprule
 & \multicolumn{2}{c}{$k=10^{-3}$} \\
 \cmidrule(lr){2-3}
\multicolumn{1}{c}{$h$} & err & $\mathrm{EOC}_h$ \\
\midrule
$ 2^{-3}$ & 36.954 & - \\
$ 2^{-4}$ & 16.899 & 1.12 \\
$ 2^{-5}$ & 8.0461 & 1.07 \\
$ 2^{-6}$ & 3.9657 & 1.02 \\
% $ 8^{-1}$  & 36.95431618731963   & - \\
% $ 16^{-1}$ & 16.89991099643564  & 1.1287272294363098\\
% $ 32^{-1}$ & 8.046177825709162  & 1.070640120640837 \\
% $ 64^{-1}$ & 3.9657470127802688 & 1.0207109762640825 \\
\bottomrule
  \end{tabular}
  \caption{Convergence with respect to $h$ for $k=10^{-3}$.}\label{subfig:Ex1_hConvergence}
\end{minipage}
\begin{minipage}[t]{0.55\textwidth}
  \small
  \centering
  \includegraphics[height=5.0cm]{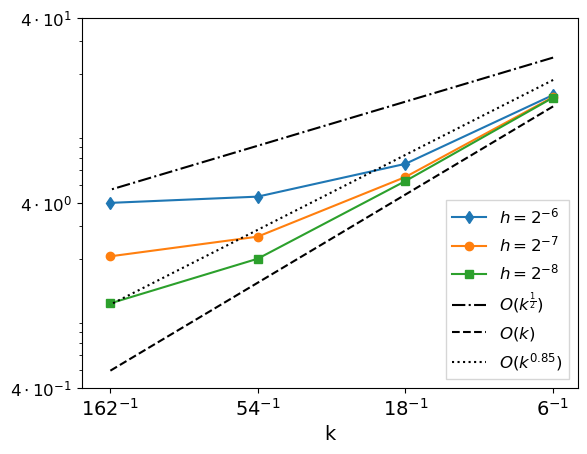}
  \begin{tabular}{rcccccc}
    \toprule
%     $k$     & $\|\bar \q - \bar \q_{kh}\|_{L^2(Q)}$ & EOC\\
%     \hline
%     $1/10$ & 9.37317037487342,      & -                  \\       
%     $1/18$ & 5.531806757736466,     & 0.8971569836653716,\\
%     $1/20$ & 5.043085782069044,     & 0.8779031482555014,\\
%     $1/22$ & 4.677230403285023,     & 0.7901780549659336,\\
%     $1/30$ & 3.69149300094017,      & 0.7630868816203764,\\
%     $1/50$ & 2.724576086845603,     & 0.5945632283072282,\\
%     $1/100$ & 2.1865873241109584,   & 0.3173508149474773,\\
%     $1/200$ & 2.0293912217040515    & 0.107633953834564
 & \multicolumn{2}{c}{$h=2^{-6}$} & \multicolumn{2}{c}{$h=2^{-7}$} & \multicolumn{2}{c}{$h=2^{-8}$} \\
\cmidrule(lr){2-3} \cmidrule(lr){4-5} \cmidrule(lr){6-7}
\multicolumn{1}{c}{$k$} & err & $\mathrm{EOC}_k$ & err & $\mathrm{EOC}_k$ & err & $\mathrm{EOC}_k$\\
 \midrule
$6^{-1}$    & 15.373 & - & 14.977 & - & 14.878 & -\\
$18^{-1}$   & 6.5178 & 0.78 & 5.5318 & 0.90 & 5.2592 & 0.94\\
$54^{-1}$   & 4.3312 & 0.37 & 2.6310 & 0.67 & 1.9980 & 0.88\\
$162^{-1}$  & 4.0066 & 0.07 & 2.0579 & 0.22 & 1.1443 & 0.50\\
% $1/6$    & 15.373271944040672,   & 14.977643777513908,   & 14.878046023383211, \\
% $1/18$   & 6.51785542983257,     & 5.531806757736472,    & 5.259211056586754, \\
% $1/54$   & 4.331288077196858,    & 2.631072170199919,    & 1.9980115877745164, \\
% $1/162$  & 4.0066863789375775    & 2.057968661833244     & 1.1443651190852573

% $1/18$   & 6.5178 & 0.781062639036718,   & 5.5318 & 0.9066384959428118, & 5.2592 & 0.9465629409189632\\
% $1/54$   & 4.3312 & 0.37199695384176296, & 2.6310 & 0.676419749484795,  & 1.9980 & 0.8809555075475195\\
% $1/162$  & 4.0066 & 0.07090801340760489  & 2.0579 & 0.22362031183982234 & 1.1443 & 0.5072785764081452
\bottomrule
  \end{tabular}
  \caption{Convergence with respect to $k$ for different values of $h$.}
  \label{subfig:Ex1_kConvergence}
  %\caption{blub}
\end{minipage}
%\vspace{-5mm}
\caption{Numerical observation of the error $\|\bar \q - \bar \q_\sigma\|_{\LtwoLtwo}$ for Example 1.}
\end{figure}

\subsection{Example 2}~\\
In this example we consider an optimal control problem where we specify rough data 
$\uu_d$, and $\ww$, while keeping the domain $\Omega = (0,1)^2$, $I = (0,1]$ and 
$\q_a = (-\infty,-\infty)^T$, $\q_b = (+\infty,+\infty)^T$. We specify
\begin{equation*}
  \uu_d = 5 \cdot 10^{4} \varphi(t)
  \begin{pmatrix}
    \sin(\pi x_2) \cos(\pi x_2) \sin(\pi x_1)^2\\
    -\sin(\pi x_1) \cos(\pi x_1) \sin(\pi x_2)^2
  \end{pmatrix}
  \quad \text{where}
  \quad 
  \varphi(t) = \begin{cases}{}
    \sqrt{t-\frac{1}{5}}\cdot (\frac{2}{5}-t) & \text{if } t \in [ \frac{1}{5} , \frac{2}{5} ],\\
    -\sqrt{t-\frac{3}{5}}\cdot (\frac{4}{5}-t) & \text{if } t \in [ \frac{3}{5} , \frac{4}{5} ],\\
    0 & \text{else.}
  \end{cases}
\end{equation*}
The weight  in the state constraint is given by 
\begin{equation*}
    \ww = \chi_{\{(x_1-0.5)^2+(x_2-0.5)^2 \le 0.125\}}(x_1,x_2) \cdot
    \begin{pmatrix}
        x_2-0.5\\
        -x_1+0.5
    \end{pmatrix}
\end{equation*}
and the scalar constraint is given by $\beta\equiv 1$.
Moreover, we consider the regularization parameter $\alpha = 10^{-4}$.
As in this case, no analytical optimal solution is known, we estimate the errors using a numerical reference
solution on a fine grid. To this end, we discretize the problem on 960 time intervals and 128 subdivisions
of $\Omega$ in each direction. 
\revB{%
  Note that due to this evaluation of the errors, we expect a faster convergence than theoretically derived,
  as on the finest discretization level, the error would equal to $0$.
}
In \Cref{subfig:Ex2_hConvergence} we can again observe order 1 convergence with respect to $h$ for 
fixed time discretization.
In \Cref{subfig:Ex2_kConvergence} we observe that the convergence with respect to $k$ exhibits a rate of 
about $0.6$, which is much closer to the analytically derived $0.5$ than the rate observed for the smooth
example 1.
% The runtime of this example was about 18.5 days, in which 10 iterations of the PDASS algorithm were performed,
% after which the algorithm terminated due to its iteration limit.

\begin{figure}[H]
  \begin{minipage}[b]{0.35\textwidth}
    %larger scale: .8\textwidth
    %smaller scale: .7\textwidth
  \includegraphics[height=0.8\textwidth]{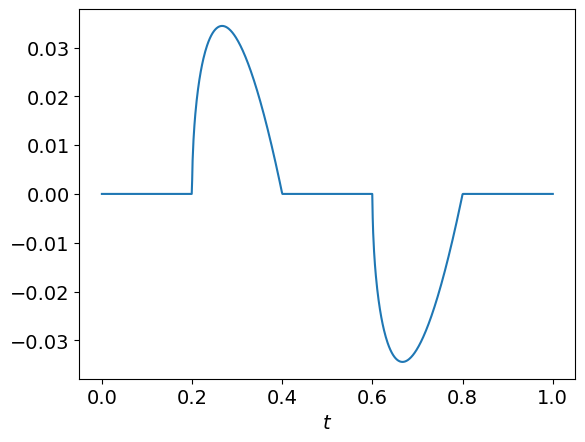}
  \caption{Time function $\varphi(t)$.}
\end{minipage}
\hspace{2pt}
  \begin{minipage}[b]{0.28\textwidth}
    %larger scale: \textwidth
    %smaller scale: 0.875\textwidth
  \includegraphics[height=\textwidth]{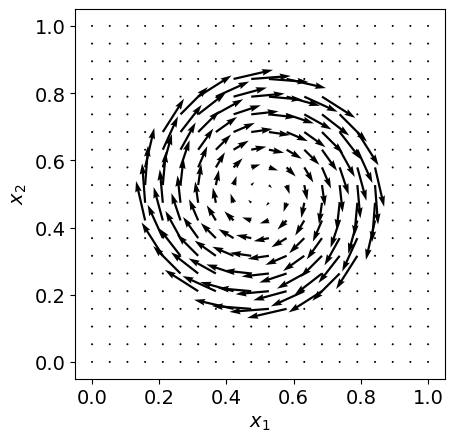}
  \caption{Weight function $\ww$.}
\end{minipage}
\begin{minipage}[b]{0.35\textwidth}
    %larger scale: .8\textwidth
    %smaller scale: .7\textwidth
  \includegraphics[height=0.8\textwidth]{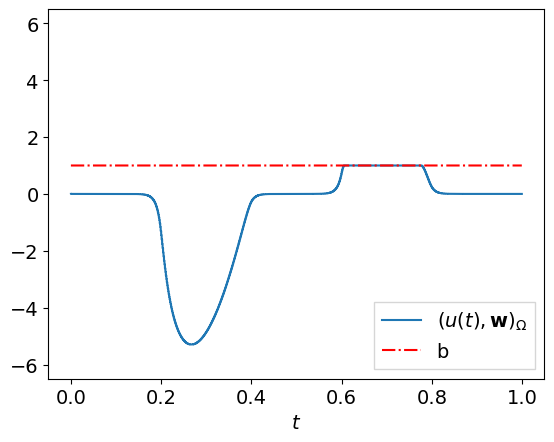}
  \caption{Satisfaction of the state constraint.}
\end{minipage}
%\vspace{-5mm}
\caption{Data for Example 2.}
\end{figure}

\begin{figure}[H]
\begin{minipage}[t]{.45\textwidth}
    \begin{center}
        \includegraphics[height=5cm]{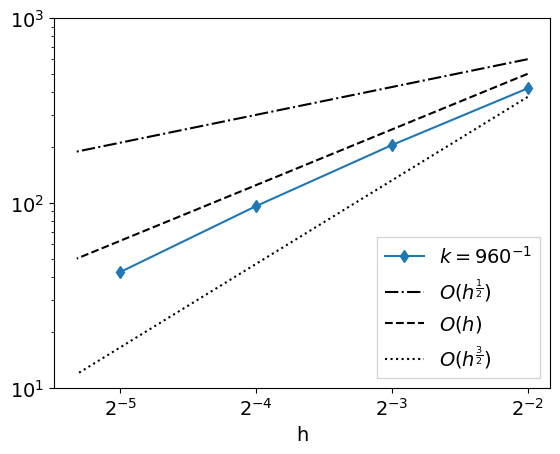}
        \small
%         \begin{tabular}{ccc}
%           $h$     & $\|\bar \q - \bar \q_{kh}\|_{L^2(Q)}$ & EOC\\
%           \hline
%           $1/4$  & 417.33667   & - \\
%           $1/8$  & 205.87684   & 1.01943, \\       
%           $1/16$ &  96.30650   & 1.09607,\\
%           $1/32$ &  42.37267   & 1.18449,\\
%           ~
% %           $1/8$  & 205.87684179579574  & -                  \\       
% %           $1/16$ & 96.3065052228341    & 1.0960764002059287,\\
% %           $1/32$ & 42.37267247828871   & 1.1844991276921342,\\
%         \end{tabular}
        \begin{tabular}{ccc}
          \toprule
          $h$     & err & $\mathrm{EOC}_h$\\
          \midrule
          $2^{-2}$ & 417.33 & - \\
          $2^{-3}$ & 205.87 & 1.01\\       
          $2^{-4}$ & 96.306 & 1.09\\
          $2^{-5}$ & 42.372 & 1.18\\
          \bottomrule
%           $2^{-2}$ & 417.33667   & - \\
%           $2^{-3}$ & 205.87684   & 1.01943, \\       
%           $2^{-4}$ &  96.30650   & 1.09607,\\
%           $2^{-5}$ &  42.37267   & 1.18449,\\
%           \bottomrule
%           $1/8$  & 205.87684179579574  & -                  \\       
%           $1/16$ & 96.3065052228341    & 1.0960764002059287,\\
%           $1/32$ & 42.37267247828871   & 1.1844991276921342,\\
        \end{tabular}
        \vspace{4mm}
    \end{center}
    \caption{Convergence with respect to $h$ for $k=960^{-1}$.}
        \label{subfig:Ex2_hConvergence}
    \end{minipage}
    \begin{minipage}[t]{0.45\textwidth}
        \begin{center}
  \includegraphics[height=5cm]{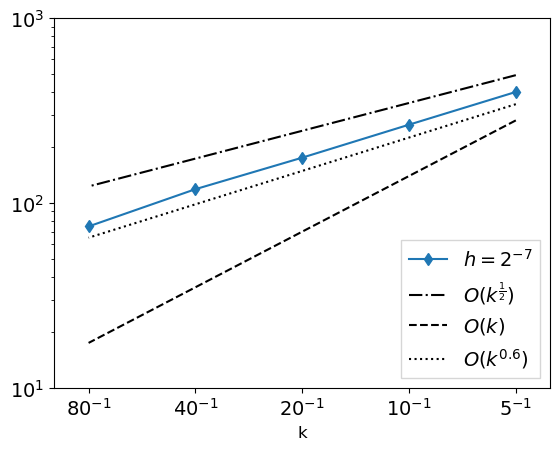}
  \small
%   \begin{tabular}{ccc}
%     $k$     & $\|\bar \q - \bar \q_{kh}\|_{L^2(Q)}$ & EOC\\
%     \hline
%     $1/5$   & 399.0657 & - \\
%     $1/10$  & 265.3922 & 0.58850\\       
%     $1/20$  & 175.9790 & 0.59272\\
%     $1/40$  & 118.8069 & 0.56678\\
%     $1/80$  &  74.6975 & 0.66948
%    %  \\
%    %  $1/240$ &  27.1787 & 0.92026
%   \end{tabular}
  \begin{tabular}{ccc}
    \toprule
    $k$     & err & $\mathrm{EOC}_k$\\
    \midrule
    $5^{-1}$   & 399.06 & - \\
    $10^{-1}$  & 265.39 & 0.58\\       
    $20^{-1}$  & 175.97 & 0.59\\
    $40^{-1}$  & 118.80 & 0.56\\
    $80^{-1}$  & 74.697 & 0.66\\
    \bottomrule
   %  \\
   %  $1/240$ &  27.1787 & 0.92026
  \end{tabular}
  \end{center}
  \caption{Convergence with respect to $k$ for $h=2^{-7}$.}
  \label{subfig:Ex2_kConvergence}
\end{minipage}
% \begin{minipage}[b]{0.4\textwidth}
%   \begin{tabular}{ccc}
%     $k$     & $\|\bar \q - \bar \q_{kh}\|_{L^2(Q)}$ & EOC\\
%     \hline
%     $1/10$  &265.3922236948171     & -                  \\       
%     $1/20$ &175.9790527435956,     & 0.5927223874205209,\\
%     $1/40$ &118.80694060384904,    & 0.5667845915810965,\\
%     $1/80$ &74.69753568476074,     & 0.6694865660974818,\\
%     $1/240$ &27.178799682873297,   & 0.9202608295561461,
%   \end{tabular}
%   \vspace{1cm}
%   \caption{h=1/128}
% \end{minipage}
%\vspace{-5mm}
\caption{Numerical observation of the error $\|\bar \q - \bar \q_\sigma\|_{\LtwoLtwo}$ for Example 2.}
\end{figure}

\revision{\subsection{Example 3}
  In the previous examples, no control constraints were present. To highlight, that the derived error estimates
  are indeed not influenced by the control constraints, we augment the Example of \Cref{subsec:examp_1}
  by a control constraint 
  $\bar \q \le 200\cdot \mathds{1}$. The remaining choices of $\Om, \ww, \uu_d, \alpha$ and $\beta$ are 
  kept the same. Due to the presence of the control constraint, an analytic solution $\bar \q$ to the optimal
  control problem is not known, and we again compare to a fine-grid solution, computed with 
  $k=960^{-1}$ and $h=2^{-7}$.
  The numerically observed orders of convergence in $k$ and $h$, as well as the active set of the control 
  contstraint for a fixed poin in time can be observed in \Cref{fig:control_Contraint_example}.
  It has to be noted, that while the discrete problem has a similar structure to the one without control
  constraint, and can be solved by the same PDAS algorithm, the resulting saddle point systems can be much larger
  for large active sets. Efficient preconditioners for such problems are needed, to study the performance of 
  this algorithm for finer discretizations.
\begin{figure}[h]
  \begin{center}
  \begin{minipage}[t]{0.25\textwidth}
    \small
    \centering
  %\includegraphics[height=5.0cm]{plots/hConvergence_new.png}
  %\caption{blub}
  \begin{tabular}{ccc}
    \toprule
 & \multicolumn{2}{c}{$k=960^{-1}$} \\
 \cmidrule(lr){2-3}
\multicolumn{1}{c}{$h$} & err & $\mathrm{EOC}_h$ \\
\midrule
$ 2^{-2}$ & 64.049 & - \\
$ 2^{-3}$ & 35.438 & 0.85\\
$ 2^{-4}$ & 16.024 & 1.14\\
$ 2^{-5}$ & 6.8962 & 1.21\\
%$ 2^{-6}$ & 0.3690 & 4.22\\
\bottomrule
  \end{tabular}
  %\caption{Convergence with respect to $h$ for $k=960^{-1}$.}%\label{subfig:Ex1_hConvergence}
\end{minipage}
\begin{minipage}[t]{0.25\textwidth}
  \small
  \centering
  \begin{tabular}{rcc}
    \toprule
 & \multicolumn{2}{c}{$h=2^{-7}$}\\
\cmidrule(lr){2-3} 
\multicolumn{1}{c}{$k$} & err & $\mathrm{EOC}_k$ \\
 \midrule
$5^{-1}$    & 18.915 & -    \\
$10^{-1}$   & 9.1295 & 1.05\\
$20^{-1}$   & 4.5816 & 0.99\\
$40^{-1}$   & 2.3133 & 0.98\\
$80^{-1}$   & 1.1640 & 0.99\\
$160^{-1}$  & 0.5827 & 0.99\\
\bottomrule
  \end{tabular}
  %\caption{Convergence with respect to $k$ for $h=2^{-7}$ different values of $h$.}
  %\label{subfig:Ex1_kConvergence}
  %\caption{blub}
\end{minipage}
\begin{minipage}[t]{0.25\textwidth}
  \vspace{-18mm}
  \includegraphics[width=0.97\linewidth]{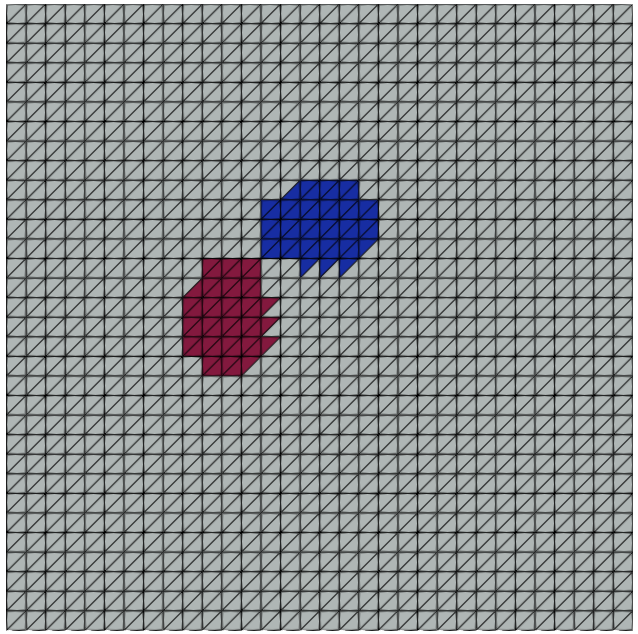}
\end{minipage}
%\vspace{-5mm}
\vspace{2mm}
\caption{Left: Orders of convergence for an example with control constraints.
  Right: Active sets of the discrete optimal control for $h=2^{-5}$, $k=30^{-1}$ at $t=1/3$.
(grey: inactive, blue: constraint in $x_1$ direction active, red: constraint in $x_2$ direction active).}
\label{fig:control_Contraint_example}
\end{center}
\end{figure}
%
% ~\\
% {\small
%   \begin{tabular}{ccccccc}
%     \toprule
% k      & $5^{-1}$    & $10^{-1}$    & $20^{-1}$    & $40^{-1}$    & $80^{-1}$    & $160^{-1}$   \\
% err      & 18.915      & 9.1295    & 4.5816 & 2.3133 & 1.1640 & 0.5827 \\
% EOC      & -           & 1.05 & 0.99 & 0.98 & 0.99 & 0.99\\
% \bottomrule
%   \end{tabular}
% }
}

\bibliographystyle{siam}
\bibliography{stokes_state}

% \appendix
% \include{app.tex}
% 
% \include{ode.tex}

\end{document}